\newcommand{\rrVert}{\Vert}
\newcommand{\rrvert}{\vert}
\newcommand{\llVert}{\Vert}
\newcommand{\llvert}{\vert}
\newtheorem{theorem}{Theorem}[section]
\newtheorem{proposition}{Proposition}[section]
\newtheorem{lemma}[theorem]{Lemma}
\newtheorem{corollary}{Corollary}[section]
\newcommand{\xrightarrow}[2]{{}\mathop{\hbox to 1cm{\rightarrowfill}}\limits^{#2}_{#1}{}}
\newcommand{\Dom}{\operatorname{Dom}}
\newcommand{\Tr}{\operatorname{Tr}}
\newcommand{\intt}{\operatorname{int}}
\newcommand{\Bd}{\operatorname{Bd}}
\newcommand{\fraca}[2]{{#1}/{#2}}
\newcommand{\fracb}[2]{(#1)/{#2}}
\begin{document}
\begin{frontmatter}

\title{Backward stochastic variational inequalities on random interval}
\runtitle{Backward SVIs on random interval}

\begin{aug}
\author[1,2]{\inits{L.}\fnms{Lucian}~\snm{Maticiuc}\corref{}\thanksref{1,2,e1}\ead[label=e1,mark]{lucian.maticiuc@uaic.ro}} \and
\author[1,3]{\inits{A.}\fnms{Aurel}~\snm{R\u{a}\c{s}canu}\thanksref{1,3,e2}\ead[label=e2,mark]{aurel.rascanu@uaic.ro}}
\address[1]{Faculty of Mathematics, ``Alexandru Ioan Cuza'' University, Carol I Blvd., no. 11, Iasi, 700506, Romania.
\printead{e1,e2}}
\address[2]{Department of Mathematics, ``Gheorghe Asachi'' Technical University, Carol I Blvd., no. 11, Iasi, 700506, Romania}
\address[3]{``Octav Mayer'' Mathematics Institute of the Romanian Academy, Iasi branch, Carol I Blvd., no. 8, Iasi, 700506, Romania}
\end{aug}

\received{\smonth{6} \syear{2012}}
\revised{\smonth{1} \syear{2014}}

\begin{abstract}
The aim of this paper is to study, in the infinite dimensional framework, the
existence and uniqueness for the solution of the following multivalued
generalized backward stochastic differential equation, considered on a random,
possibly infinite, time interval:
\[
\cases{\displaystyle -\mathrm{d}Y_{t}+\partial_{y}\Psi (
t,Y_{t} )\,\mathrm{d}Q_{t}\ni\Phi ( t,Y_{t},Z_{t}
)\,\mathrm{d}Q_{t}-Z_{t}\,\mathrm{d}W_{t},\qquad
0\leq t<\tau,
\cr
\displaystyle{Y_{\tau}=\eta,} }
\]
where $\tau$ is a stopping time, $Q$ is a progressively measurable increasing
continuous stochastic process and $\partial_{y}\Psi$ is the subdifferential of
the convex lower semicontinuous function $y\longmapsto\Psi (  t,y )$.

As applications, we obtain from our main results applied for suitable convex
functions, the existence for some backward stochastic partial differential
equations with Dirichlet or Neumann boundary conditions.
\end{abstract}

\begin{keyword}
\kwd{backward stochastic differential equations}
\kwd{subdifferential operators}
\kwd{stochastic variational inequalities}
\kwd{stochastic partial differential equations}
\end{keyword}

\end{frontmatter}

\section{Introduction}
In this paper, we are interested to prove the existence and uniqueness of a
triple $ (  Y,Z,K )  $ which is the solution for the following
generalized backward stochastic variational inequality (BSVI for short)
considered in the Hilbert space framework:
%
\begin{equation}
\label{GBSVI 1}
\hspace*{-10pt}\cases{\displaystyle Y_{t}+{\int_{t\wedge\tau}^{\tau}}
\,\mathrm{d}K_{s}= \eta+{\int_{t\wedge\tau}^{\tau}}
\bigl[ F ( s,Y_{s},Z_{s} )\,\mathrm{d}s+G (
s,Y_{s} )\,\mathrm{d}A_{s} \bigr] -{\int
_{t\wedge\tau}^{\tau}} Z_{s}\,\mathrm{d}W_{s},
\qquad\mbox{a.s.,}\vspace*{2pt}
\cr
\displaystyle \mathrm{d}K_{t}\in \partial
\varphi ( Y_{t} )\,\mathrm{d}t+\partial\psi ( Y_{t} )\,
\mathrm{d}A_{t}, \qquad\forall t\geq0, }
\end{equation}
where $ \{  W_{t}\dvtx t\geq0 \}  $ is a cylindrical Wiener process,
$\partial\varphi$, $\partial\psi$ are the subdifferentials of a convex lower
semicontinuous functions $\varphi$, $\psi$, $ \{  A_{t}\dvtx t\geq0 \}  $
is a progressively measurable increasing continuous stochastic process, and
$\tau$ is a stopping time.

In fact, we will define and prove the existence of the solution for an
equivalent form of (\ref{GBSVI 1}):
%
\begin{equation}
\label{GBSVI 2} \cases{\displaystyle Y_{t}+{\int_{t}^{\infty}}
\,\mathrm{d}K_{s}= \eta+{\int_{t}^{\infty}}
\Phi ( s,Y_{s},Z_{s} )\,\mathrm{d}Q_{s}-{\int
_{t}^{\infty}}Z_{s}\,\mathrm{d}W_{s},
\qquad \mbox{a.s., }t\geq0,\vspace*{2pt}
\cr
\displaystyle \mathrm{d}K_{t}\in
\partial_{y}\Psi ( t,Y_{t} )\,\mathrm{d}Q_{t},
\qquad\mbox{on }[0,\infty), }
\end{equation}
with $Q$, $\Phi$ and $\Psi$ adequately defined. The notation $\mathrm{d}K_{t}
\in\partial_{y}\Psi (  t,Y_{t} )\,\mathrm{d}Q_{t}$ means that $Y$ is a
continuous stochastic process and for any continuous stochastic process
$X\dvtx \mathbb{R}_{+}\rightarrow H$ and any $0\leq s_{1}\leq s_{2}$, the bounded
variation of $K$ on $ [  s_{1},s_{2} ]  $ is finite and the following
inequality holds:
\[
\int_{s_{1}}^{s_{2}} \langle X_{r}-Y_{r},\mathrm{d}K_{r}
\rangle +\int_{s_{1}%
}^{s_{2}}\Psi ( r,Y_{r} )
\,\mathrm{d}Q_{r}\leq\int_{s_{1}}^{s_{2}}\Psi
( r,Y_{r} )\,\mathrm{d}Q_{r} , \qquad\mbox{a.s.}
\]
The study of the backward stochastic differential equations (BSDEs for short)
in the finite dimensional case (equation of type (\ref{GBSVI 1}) with $A$ and
$\varphi$ equal to $0$) was initiated by Pardoux and Peng \cite{pa-pe/90} (see also Pardoux and Peng \cite{pa-pe/92}). The authors have proved the
existence and the uniqueness of the solution for the BSDE on fixed time
interval, under the assumption of Lipschitz continuity of $F$ with respect to
$y$ and $z$ and square integrability of $\eta$ and $F (  t,0,0 )  $.
The case of BSDEs on random time interval (possibly infinite), under weaker
assumptions on the data, have been treated by Darling and Pardoux \cite{da-pa/97}, where it is obtained, as application, the existence of a
continuous viscosity solution to the elliptic partial differential equations
(PDEs) with Dirichlet boundary conditions.

The more general case of scalar BSDEs with one-sided reflection and
associated optimal control problems was considered by El Karoui, Kapoudjian, Pardoux, Peng and Quenez \cite{ka-ka-pa/97} and with
two-sided reflection associated with stochastic game problem by Cvitani\'c and Karatzas \cite{cv-ka/96}.

When the obstacles are fixed, the reflected BSDE become a particular case of
BSVI of type (\ref{GBSVI 1}), by taking $\Psi$ as convex indicator of the
interval defined by obstacles. We must mention that the solution of a BSVI
belongs to the domain of the operator $\partial\Psi$ and it is reflected at
the boundary of this.

The standard work on BSVI in the finite dimensional case is that of Pardoux and R\u{a}\c{s}canu \cite{pa-ra/98}, where it is proved the existence and
uniqueness of the solution $ (  Y,Z,K )  $ for BSVI (\ref{GBSVI 1})
with $A\equiv0$, under the following assumptions on $F$: monotonicity with
respect to $y$ (in the sense that $\langle y^{\prime}-y,F(t,y^{\prime
},z)-F(t,y,z)\rangle\leq\alpha|y^{\prime}-y|^{2}$), Lipschitzianity with
respect to $z$ and a sublinear growth for $F (  t,y,0 )  $. Moreover,
it is shown that, unlike the forward case, the process $K$ is absolute
continuous with respect to $\mathrm{d}t$. In Pardoux and R\u{a}\c{s}canu \cite{pa-ra/99}, the same authors extend
these results to the Hilbert spaces framework. Afterwards, various particular cases of BSVI (\ref{GBSVI 1}) were the subject of many articles: Maticiuc and R\u{a}\c{s}canu \cite{ma-ra/10}, Maticiuc, R\u{a}\c{s}canu and Z\u{a}linescu \cite{ma-ra-za/14}, Maticiuc and Rotenstein \cite{ma-ro/12}, Maticiuc and Nie \cite{ma-ni/13} (where the backward equations are studied in the frame of fractional stochastic calculus) and Diomande and Maticiuc \cite{di-ma/14} (where the generator $F$ at the moment $t$ is allowed to depend on the past values on $[0,t]$ of the solution $(Y,Z)$).

Our paper generalizes the existence and uniqueness results from
Pardoux and R\u{a}\c{s}canu \cite{pa-ra/99} by considering random time interval $ [  0,\tau ]  $
and the Lebesgue--Stieltjes integral terms, and by assuming a weaker
boundedness condition for the generator $\Phi$ (instead of the sublinear
growth), that is,
%
\begin{equation}
\label{local bound} \mathbb{E} \biggl(\int_{0}^{T}
\Phi_{\rho}^{\#}(s)\,\mathrm{d}s \biggr)^{p}<\infty,
\qquad \mbox{where } \Phi_{\rho}^{\#} ( t ) :=\sup
_{\llvert  y\rrvert
\leq\rho} \bigl\llvert \Phi(t,y,0) \bigr\rrvert . %
\end{equation}

We mention that, since $\tau$ is a stopping time, the presence of the process
$A$ is justified by the possible applications of equation (\ref{GBSVI 1}) in
proving probabilistic interpretation for the solution of elliptic multivalued
partial differential equations with Neumann boundary conditions on a domain
from $\mathbb{R}^{d}$. The stochastic approach of the existence problem for
finite dimensional multivalued parabolic PDEs, was considered by Maticiuc and R\u{a}\c{s}canu \cite{ma-ra/10}.

Concerning assumption (\ref{local bound}), we recall that, in the case of
finite dimensional BSDE, Pardoux \cite{pa/99} has used a similar
condition, in order to prove the existence of a solution in $L^{2}$. His
result was generalized by Briand, Delyon, Hu, Pardoux and Stoica \cite{br-de-hu-pa/03}, where it is proved the existence in $L^{p}$ of the
solution for BSDEs considered both with fixed and random terminal time. We
mention that the assumptions from our paper are, broadly speaking, similar to
those of Briand, Delyon, Hu, Pardoux and Stoica \cite{br-de-hu-pa/03}.

The article is organized as follows: in the next section a brief summary of
infinite dimensional stochastic integral and the assumptions are given.
Section \ref{sec3} is devoted to the proof of the existence and uniqueness of a strong
solution for (\ref{GBSVI 2}). In the  Section \ref{weak form},  is a new
type of solution (called variational weak solution) and it is also proves the
existence and uniqueness result. In Section \ref{weak form} are obtained, as applications,
the existence of the solution for various type of backward stochastic partial
differential equations with boundary conditions. The \hyperref[app]{Appendix} contains,
following Pardoux and R\u{a}\c{s}canu \cite{pa-ra/13}, some results useful throughout the paper.

\section{Preliminaries}

\subsection{Infinite dimensional framework}

In the beginning of this subsection, we give a brief exposition of the
stochastic integral with respect to a Wiener process defined on a Hilbert
space. For a deeper discussion concerning the notion of cylindrical Wiener
process and the construction of the stochastic integral, we refer reader to
Da Prato and Zabczyk \cite{da-za/92}.

We consider a complete probability space $(\Omega,\mathcal{F},\mathbb{P})$,
the set $\mathcal{N}_{\mathbb{P}}=\{A\in\mathcal{F}\dvtx \mathbb{P} (
A )  =0\}$, a right continuous and complete filtration $ \{
\mathcal{F}_{t} \}  _{t\geq0} $, and two real separable Hilbert spaces
$H,H_{1}$.

Let us denote by $\mathcal{S}_{H}^{p} [  0,T ]  $, $p\geq0$, the
complete metric space of continuous progressively measurable stochastic
process (p.m.s.p.) $X\dvtx \Omega\times [  0,T ]  \rightarrow H$ with the
metric given by
\[
\rho_{p}(X,\tilde{X})=\cases{\displaystyle \Bigl(\mathbb{E}\sup
_{t\in [  0,T ]  }%
\llvert X_{t}-\tilde{X}_{t}
\rrvert ^{p} \Bigr)^{1\wedge1/p}<\infty, &\quad if $p>0$,
\vspace*{2pt}
\cr
\displaystyle \mathbb{E} \Bigl(1\wedge \sup_{t\in [  0,T ]
}
\llvert X_{t}-\tilde{X}_{t}\rrvert \Bigr)< \infty, &\quad if
$p=0$, }
\]
and by $\mathcal{S}_{H}^{p}$ the space of p.m.s.p. $X\dvtx \Omega\times
[0,\infty)\rightarrow H$ such that, for all $T>0$, the restriction
$X\rrvert  _{ [  0,T ]  }\in\mathcal{S}_{H}^{p}[0,T]$. To
shorten notation, we continue to write $\mathcal{S}^{p}$ for $\mathcal{S}%
_{H}^{p}$. Remark that $\mathcal{S}_{H}^{p} [  0,T ]  $ is a Banach
space for $p\geq1$.

By $\mathrm{M}^{p} (  \Omega\times [  0,T ]  ;H )  $,
$p\geq1$, we denote the Banach space of the continuous stochastic processes
$M$ such that $\mathbb{E} (  \llvert  M (  t )  \rrvert
^{p} )  <{\infty}$, $\forall t\in [  0,T ]  $, $M (
0 )  =0$ a.s., and $\mathbb{E}^{\mathcal{F}_{s}} (  M_{t} )
=M_{s}$, a.s. for all $0\leq s\leq t\leq T$. The norm is defined by
$\llVert  M\rrVert  _{\mathrm{M}^{p}}= [  \mathbb{E} (
\llvert  M (  T )  \rrvert  ^{p} )   ]  ^{1/p}$. If
$p>1$, then $\mathrm{M}^{p} (  \Omega\times [  0,T ]  ;H )
$ is a closed linear subspace of $\mathcal{S}_{H}^{p} [  0,T ]  $.

Let $W=\{W_{t}(a)\dvtx t\geq0, a\in H_{1}\}\subset L^{0} (  \Omega
,\mathcal{F},\mathbb{P} )  $ be a Gaussian family of real-valued random
variables with zero mean and the covariance function given by $\mathbb{E}%
[  W_{t}(a)W_{s}(b) ]  = (  t\wedge s )    \langle
a,b \rangle _{H_{1}}$, $s,t\geq0$, $a,b\in H_{1}$. We call $W$ a $H_{1}%
$-Wiener process if, for all $t\geq0$,
\begin{enumerate}[(ii)]
\item[(i)]  $\mathcal{F}_{t}^{W}:=\sigma\{W_{s}(a)\dvtx s\in[ 0,t], a\in H_{1}\}
\vee\mathcal{N}_{\mathbb{P}}\subset\mathcal{F}_{t}$,

\item[(ii)] $W_{t+h}(a)-W_{t}(a)$ is independent of
$\mathcal{F}_{t}$, for all $h>0$, $a\in H_{1}$.
\end{enumerate}

Let $ \{  e_{i} \}  _{i\in\mathbb{N}^{\ast}}$ be an orthonormal and
complete basis in $H_{1}$. We introduce the separable Hilbert space
$L_{2}(H_{1};H)$ of Hilbert--Schmidt operators from $H_{1}$ to $H$, that is, the
space of linear operators $Z\dvtx H_{1}\rightarrow H$ such that $\llvert
Z\rrvert  _{L_{2}(H_{1};H)}^{2}=\sum_{i=1}^{\infty}\llvert
Ze_{i}\rrvert  _{H}^{2}=\Tr (  Z^{\ast}Z )  <\infty$. It
will cause no confusion if we use $\llvert  Z\rrvert  $ to designate the
norm in $L_{2}(H_{1};H)$.

The sequence $W^{i}= \{  W_{t}^{i}:=W_{t} (  e_{i} )\dvtx t\in[0,T]\}, i\in\mathbb{N}^{\ast}$, defines a family of
real-valued Wiener processes mutually independent on $(\Omega,\mathcal{F}%
,\mathbb{P})$.

If $H_{1}$ is finite dimensional space then we have the representation
$W_{t}=\sum_{i}W_{t}^{i}, t\geq0$, but, in general case, this series
does not converge in $H_{1}$, but rather in a larger space $H_{2}$ such that
$H_{1}\subset H_{2}$ with the injection $J\dvtx  H_{1}\rightarrow H_{2}$
being a Hilbert--Schmidt operator. Moreover, $W\in\mathrm{M}^{2} (
\Omega\times [  0,T ]  ;H_{2} )  $.

For $0<T\leq\infty$, we will denote by $\Lambda_{L_{2} (
H_{1},H )  }^{p} (  0,T )  $, $p\geq0$, the space $L_{ad}%
^{p}(\Omega\times (  0,T );\allowbreak L_{2} (  H_{1}, H )  )$, that is,
the complete metric space of progressively measurable stochastic processes
$Z\dvtx \Omega\times (  0,T )  \rightarrow L_{2} (  H_{1},H )  $
with metric of convergence
\[
d_{p}(Z,\tilde{Z})=\cases{ \displaystyle \biggl(\mathbb{E} \biggl(
\int_{0}^{T}\llvert Z_{s}-
\tilde{Z}%
_{s}\rrvert ^{2}\,\mathrm{d}s
\biggr)^{p/2} \biggr)^{1\wedge1/p}< \infty, &\quad if $p>0$,
\vspace*{2pt}
\cr
\displaystyle \mathbb{E} \biggl(1\wedge \biggl(\int
_{0}^{T}\llvert Z_{s}-\tilde
{Z}_{s}\rrvert ^{2}\,\mathrm{d}s \biggr)^{1/2}
\biggr)<\infty,& \quad if $p=0$. }
\]
The space $\Lambda_{L_{2} (  H_{1},H )  }^{p} (  0,T )  $ is
a Banach space for $p\geq1$ with norm $\llVert  Z\rrVert  _{\Lambda^{p}%
}=d_{p}(Z,0)$. From now on, for simplicity of notation, we write $\Lambda
^{p} (  0,T )  $ instead of $\Lambda_{L_{2} (  H_{1},H )
}^{p} (  0,T )  $ (when no confusion can arise).

Let us denote by $\Lambda^{p}$ the space of measurable stochastic processes
$X\dvtx \Omega\times[0,\infty)\rightarrow H$ such that, for all $T>0$, the
restriction $X\rrvert  _{ [  0,T ]  }\in\Lambda^{p} (
0,T )  $.

For any $Z\in\Lambda^{2}$ let the stochastic integral $\mathrm{I}(Z)(t)=\int_{0}^{t}Z_{s}\,\mathrm{d}W_{s}:=\sum_{i=1}^{\infty}\int_{0}^{t}Z_{s}(e_{i})\,\mathrm{d}W_{s}(e_{i})$, $t\in[0,T]$, where $ \{
e_{i} \}_{i}$ is an orthonormal basis in $H_{1}$. Note that the
introduced stochastic integral does not depend on the choice of the orthonormal
basis on $H_{1}$. By the standard localization procedure, we can extend this
integral as a linear continuous operator $\mathrm{I}\dvtx \Lambda^{p} (
0,T )  \rightarrow\mathcal{S}^{p} [  0,T ]  , p\geq0$, and it
has the following properties:

\begin{proposition}
Let $Z\in\Lambda^{p} (  0,T )  $. Then
\begin{enumerate}[(iii)]
\item[(i)]  $\mathbb{E} \mathrm{I} (  Z )   (  t )
=0$, $\forall t\in [  0,T ]$,  if $p\geq1$,
\item[(ii)]   $\mathbb{E}\llvert  \mathrm{I} (  Z )   (
T )  \rrvert  ^{2}=\llVert  Z\rrVert  _{\Lambda^{2}}^{2}$,
if $p\geq2$,
\item[(iii)] $\frac{1}{c_{p}}\llVert  Z\rrVert  _{\Lambda^{p}}^{p}\leq\mathbb{E}\sup_{t\in [  0,T ]
}\llvert  \mathrm{I} (  Z )   (  t )  \rrvert  ^{p}\leq
c_{p}\llVert  Z\rrVert  _{\Lambda^{p}}^{p}$,  if $p>0$
(Burkholder--Davis--Gundy inequality),
\item[(iv)]  $\mathrm{I} (  Z )  \in\mathrm{M}^{p} (
\Omega\times [  0,T ]  ;H )$,  if $p\geq1$.
\end{enumerate}
\end{proposition}

From now on, we shall consider that the original filtration $ \{
\mathcal{F}_{t} \}  _{t\geq0}$ is replaced by the filtration
$\{\mathcal{F}_{t}^{W}\}_{t\geq0}$ generated by the Wiener process. The
following Hilbert space version of the martingale representation theorem,
extended to a random interval, holds the following proposition.

\begin{proposition}\label{mart repr th}
Let $\tau\dvtx \Omega\rightarrow [  0,\infty ]
$ be a stopping time, $p>1$ and $\eta\dvtx \Omega\rightarrow
H$ be a $\mathcal{F}_{\tau}$-measurable random variable
such that $\mathbb{E}\llvert  \eta\rrvert  ^{p}<\infty$. Then
\begin{enumerate}[3.]
\item[1.] there exists a unique stochastic process $\zeta\in
\Lambda^{p} (  0,{\infty} )  $ such that $\eta=\mathbb{E}\eta + \int_{0}^{\tau}\zeta_{s}\,\mathrm{d}W_{s}$ and $\zeta
_{t}=\mathbh{1}_{ [  0,\tau ]  } (  t )  \zeta_{t}$,
$\forall t\geq0$, or equivalently,

\item[2.] there exists a unique pair $ (  \xi,\zeta )
\in\mathcal{S}^{p}\times\Lambda^{p} (  0,{\infty} )  $ such that
%
\begin{equation}
\label{mart repr th 1} \xi_{t}=\eta-\int_{t\wedge\tau}^{\tau}
\zeta_{s}\,\mathrm{d}W_{s},\qquad\mbox{a.s., } t\geq0,
\end{equation}
or equivalently,

\item[3.] there exists a unique pair $ (  \xi,\zeta )
\in\mathcal{S}^{p}\times\Lambda^{p} (  0,{\infty} )  $
such that $\xi_{t}=\eta-{\int_{t}^{\infty}}\zeta_{s}\,\mathrm{d}W_{s}$,
a.s., $t\geq0$ and $\xi_{t}=\mathbb{E}^{\mathcal{F}%
_{t}}\eta=\mathbb{E}^{\mathcal{F}_{t\wedge\tau}}\eta$ and $\zeta
_{t}=\mathbh{1}_{ [  0,\tau ]  } (  t )  \zeta_{t} $,
$t\geq0$.
\end{enumerate}
\end{proposition}

\subsection{Assumptions and definitions}\label{sec2.2}

In order to study equation (\ref{GBSVI 1}), or the equivalent form
(\ref{GBSVI 2}), we introduce the next assumptions:

\begin{enumerate}[(A$_{5}$)]
\item[(A$_{1}$)] \textit{The parameter} $p\geq2$;

\item[(A$_{2}$)] \textit{The random variable} $\tau
\dvtx \Omega\rightarrow [  0,\infty ] $ \textit{is a stopping time};

\item[(A$_{3}$)] \textit{The random variable} $\eta
\dvtx \Omega\rightarrow H$ \textit{is} $\mathcal{F}_{\tau}$\textit{-measurable
such that} $\mathbb{E}\llvert  \eta\rrvert  ^{p}<\infty$ \textit{and the
stochastic process} $ (  \xi,\zeta )  \in\mathcal{S}^{p}\times
\Lambda^{p} (  0,{\infty} )  $ \textit{is the unique pair
associated to} $\eta$ \textit{such that we have the martingale representation
formula \emph{(\ref{mart repr th 1})}};

\item[(A$_{4}$)] \textit{The process} $ \{  A_{t}\dvtx
t\geq0 \}  $ \textit{is a progressively measurable increasing
continuous stochastic process such that} $A_{0}=0$;

\item[(A$_{5}$)] \textit{The functions} $F\dvtx \Omega
\times[0,\infty)\times H\times L_{2} (  H_{1},H )  \rightarrow
H$ \textit{and} $G\dvtx \Omega\times[0,\infty)\times H\rightarrow
H$ \textit{are such that}
\[
\cases{\displaystyle F ( \cdot,\cdot,y,z ) , G ( \cdot,\cdot,y ) \mbox{\textit{
are p.m.s.p.\textup{,} for all}} ( y,z ) \in H\times L_{2} ( H_{1},H),
\cr
\displaystyle F ( \omega,t,\cdot,\cdot ) , G ( \omega,t,\cdot )
\mbox{ \textit{are continuous functions a.e.}}, }
\]
\textit{and} $\int_{0}^{T}F_{\rho}^{\#} (  s )\,\mathrm{d}s+\int_{0}%
^{T}G_{\rho}^{\#} (  s )\,\mathrm{d}A_{s}<\infty,$ $\forall \rho$, $T\geq0$,
$\mathbb{P}$-\textit{a.s.}, \textit{where }$F_{\rho}^{\#} (  s )
=\sup_{\llvert  y\rrvert  \leq\rho}\llvert  F (
s,\allowbreak y,0 )  \rrvert  $ \textit{and} $G_{\rho}^{\#} (  s )  =\sup_{\llvert  y\rrvert  \leq\rho}\llvert  G (  s,y )
\rrvert  $.

\textit{Moreover, there exist two p.m.s.p.} $\mu,\nu\dvtx \Omega\times
[0,\infty)\rightarrow\mathbb{R}$ \textit{such that} $\int_{0}^{T}%
\llvert \mu_{t}\rrvert ^{2}\,\mathrm{d}t<\infty$ \textit{and} $\int_{0}^{T}\llvert \nu_{t}\rrvert ^{2}\,\mathrm{d}A_{t}<\infty,$
\textit{for all} $T>0, \mathbb{P}$\textit{-a.s.}, \textit{and there exists}
$\ell\geq0,$\textit{ such that}, \textit{for all} $y,y^{\prime}\in H$, $z,z^{\prime
}\in L_{2} (  H_{1},H ),$
%
\begin{eqnarray}
\label{F, G assumpt 2} \bigl\langle y^{\prime}-y,F \bigl(t,y^{\prime},z
\bigr)-F(t,y,z) \bigr\rangle &\leq&\mathbh{1}_{ [  0,\tau ]  } ( t )
\mu_{t} \bigl\llvert y^{\prime}-y \bigr\rrvert ^{2},
\nonumber
\\
\bigl\langle y^{\prime}-y,G \bigl(t,y^{\prime} \bigr)-G(t,y) \bigr
\rangle &\leq& \mathbh{1}_{ [  0,\tau ]  } ( t ) \nu_{t} \bigl\llvert
y^{\prime}-y \bigr\rrvert ^{2},
\\
\bigl\llvert F \bigl(t,y,z^{\prime} \bigr)-F(t,y,z) \bigr\rrvert &\leq&
\mathbh{1}_{ [
0,\tau ]  } ( t ) \ell \bigl\llvert z^{\prime}-z \bigr\rrvert
.
\nonumber
\end{eqnarray}
\end{enumerate}

Let us introduce the function
\[
Q_{t} ( \omega ) :=t+A_{t} ( \omega )
\]
and let $ \{  \alpha_{t}\dvtx t\geq0 \}  $ be the a real positive
p.m.s.p. (given by Radon--Nikodym's representation theorem) such that
$\alpha\in [  0,1 ]  $ and $\mathrm{d}t=\alpha_{t}\,\mathrm{d}Q_{t}$ and $\mathrm{d}A_{t}= (
1-\alpha_{t} )\,\mathrm{d}Q_{t}$.

Let
\[
\Phi ( \omega,t,y,z ) :=\mathbh{1}_{ [  0,\tau (
\omega )   ]  } ( t ) \bigl[
\alpha_{t} ( \omega ) F ( \omega,t,y,z ) + \bigl( 1-
\alpha_{t} ( \omega ) \bigr) G ( \omega,t,y ) \bigr] ,
\]
in which case (\ref{F, G assumpt 2}) yields
\begin{eqnarray*}
\bigl\langle y^{\prime}-y,\Phi \bigl(t,y^{\prime},z \bigr)-\Phi(t,y,z)
\bigr\rangle &\leq&\mathbh{1}_{ [  0,\tau ]  } ( t ) \bigl[ \mu _{t}
\alpha_{t}+\nu_{t} ( 1-\alpha_{t} ) \bigr] \bigl
\llvert y^{\prime}-y \bigr\rrvert ^{2},
\\
\bigl\llvert \Phi \bigl(t,y,z^{\prime} \bigr)-\Phi(t,y,z) \bigr\rrvert &
\leq& \mathbh{1}_{ [
0,\tau ]  } ( t ) \ell\alpha_{t} \bigl\llvert
z^{\prime
}-z \bigr\rrvert .
\end{eqnarray*}
For $a>1$, let
%
\begin{eqnarray}
\label{def V} V_{t}&=&{\int_{0}^{t}}
\mathbh{1}_{ [  0,\tau ]  } ( s ) \biggl[ \biggl(\mu_{s}+
\frac{a}{2} \ell^{2} \biggr)\alpha_{s}+
\nu_{s} ( 1-\alpha_{s} ) \biggr]\,\mathrm{d}Q_{s}
\nonumber
\\[-8pt]
\\[-8pt]
&=&{\int_{0}^{t}}\mathbh{1}_{ [
0,\tau ]  } ( s
) \biggl[ \biggl(\mu_{s}+\frac{a}{2} \ell ^{2} \biggr)
\,\mathrm{d}s+\nu_{s}\,\mathrm{d}A_{s} \biggr] .
\nonumber
\end{eqnarray}
We can give now some a priori estimates concerning the solution of
(\ref{GBSVI 1}).

\begin{lemma}
Let $ (  Y,Z )  , (\tilde{Y},\tilde{Z})\in\mathcal{S}^{0} [
0,T ]  \times\Lambda^{0} (  0,T )  $. Under assumption
\emph{(A$_{5}$)} the following inequalities hold, in the sense of signed
measures on $[0,\infty)$,
%
\begin{equation}
\label{a priori estim 1} \bigl\langle Y_{s},\Phi ( s,Y_{s},Z_{s}
)\,\mathrm{d}Q_{s} \bigr\rangle \leq\llvert Y_{s}\rrvert
\bigl\llvert \Phi ( s,0,0 ) \bigr\rrvert\,\mathrm{d}Q_{s}+\llvert
Y_{s}%
\rrvert ^{2}\,\mathrm{d}V_{s}+
\frac{1}{2a}\llvert Z_{s}\rrvert ^{2}\,\mathrm{d}s
\end{equation}
and
%
\begin{equation}
\label{a priori estim 2} \bigl\langle Y_{s}-\tilde{Y}_{s},\Phi (
s,Y_{s},Z_{s} ) -\Phi (s,\tilde{Y}_{s},
\tilde{Z}_{s}) \bigr\rangle\,\mathrm{d}Q_{s}\leq\llvert
Y_{s}- \tilde{Y}_{s}%
\rrvert ^{2}\,
\mathrm{d}V_{s}+ \frac{1}{2a}\llvert Z_{s}-
\tilde{Z}_{s}\rrvert ^{2}\,\mathrm{d}s. %
\end{equation}
\end{lemma}

\begin{pf}
The inequalities can be obtained by standard calculus (applying the
monotonicity and Lipschitz property of function $\Phi$).\hfill
\end{pf}

\begin{enumerate}[(A$_{6}$)]
\item[(A$_{6}$)] $\varphi,\psi\dvtx H\rightarrow [
0,+\infty ]  $ \textit{are proper convex lower semicontinuous
$($l.s.c.$)$
functions such that} $\varphi (  0 )  =\psi (  0 )
=0$ (\textit{consequently} $0\in\partial\varphi (0)  \cap
\partial\psi (0)$).
\end{enumerate}

Let us define
\[
\Psi ( \omega,t,y ) :=\mathbh{1}_{ [  0,\tau (
\omega )   ]  } ( t ) \bigl[
\alpha_{t} ( \omega ) \varphi ( y ) + \bigl( 1-\alpha_{t} (
\omega ) \bigr) \psi ( y ) \bigr] .
\]
We recall now that the multivalued subdifferential operator $\partial\varphi$
is the maximal monotone operator
\[
\partial\varphi ( y ) := \bigl\{ \hat{y}\in H\dvtx \langle \hat {y},v-y \rangle
+\varphi ( y ) \leq\varphi ( v ) , \forall v\in H \bigr\} .
\]
We define $\Dom  (  \varphi )  = \{  y\in H\dvtx \varphi (
y )  <\infty \}  $ and $\Dom  (  \partial\varphi )
= \{  y\in H\dvtx \partial\varphi (  y )  \neq\varnothing \}
\subset\Dom  (  \varphi )  $ and by $ (  y,\hat{y} )
\in\partial\varphi$ we understand that $y\in\Dom  (  \partial
\varphi )  $ and $\hat{y}\in\partial\varphi (  y )  $. We know
that $\intt (  \Dom  (  \varphi )   )
=\intt (  \Dom  (  \partial\varphi )   )  $
and $\overline{\Dom  (  \varphi )  }=\overline{\Dom %
(  \partial\varphi )  } $.

\begin{definition}
If $k\dvtx [0,\infty)\rightarrow H$ is a locally bounded variation function,
$a\dvtx [0,\infty)\rightarrow\mathbb{R}$ is a real increasing function,
$y\dvtx [0,\infty)\rightarrow H$ is a continuous function and $\varphi$ is like in
\emph{(A$_{6}$)}, then notation $\mathrm{d}k_{t}\in\partial\varphi (
y_{t} )\,\mathrm{d}a_{t}$ means that for any continuous function $x\dvtx [0,\infty
)\rightarrow H$, it holds
%
\begin{equation}
\label{in the subdiff} {\int_{t}^{s}} \langle
x_{r}-y_{r},\mathrm{d}k_{r} \rangle +{\int
_{t}^{s}%
}\varphi ( y_{r} )
\,\mathrm{d}a_{r}\leq{\int_{t}^{s}}
\varphi ( x_{r} )\,\mathrm{d}a_{r} ,\qquad 0\leq t\leq s
.%
\end{equation}
\end{definition}

Now we are able to introduce the rigorous definition of a solution for
equation (\ref{GBSVI 1}). First, using definitions of $Q$, $\Phi$ and $\Psi
$, respectively, we can rewrite (\ref{GBSVI 1}) in the form
%
\begin{equation}
\label{GBSVI 4} \cases{\displaystyle Y_{t}+{\int_{t}^{\infty}}
\,\mathrm{d}K_{s}= \eta+{\int_{t}^{\infty}}
\Phi ( s,Y_{s},Z_{s} )\,\mathrm{d}Q_{s}-{\int
_{t}^{\infty}}Z_{s}\,\mathrm{d}W_{s},
\qquad \mbox{a.s., }t\geq0,\vspace*{2pt}
\cr
\displaystyle \mathrm{d}K_{t}\in
\partial_{y}\Psi ( t,Y_{t} )\,\mathrm{d}Q_{t}=
\partial\varphi ( Y_{t} )\,\mathrm{d}t+ \partial\psi ( Y_{t}
)\,\mathrm{d}A_{t}, \qquad\mbox{on }[0,\infty). }%
\end{equation}

\begin{definition}\label{definition sol GBSVI 1}
We call $ (  Y_{t},Z_{t},K_{t} )
_{t\geq0}$ a solution of (\ref{GBSVI 4}) if $K$ has locally bounded variation
and $ (  Y,Z )  \in S^{0}\times\Lambda^{0}$ with $ (  Y_{t}%
,Z_{t} )  = (  \xi_{t},\zeta_{t} )  = (  \eta,0 )  $
for $t>\tau$ such that
\begin{enumerate}[(iii)]
\item[(i)]    ${\int_{0}^{T}}\llvert  \Phi (
s,Y_{s},Z_{s} )  \rrvert\,\mathrm{d}Q_{s}<\infty$, $\mathbb{P}$-a.s., for
all $T\geq0$,
\item[(ii)]    $\mathrm{d}K_{t}\in\partial_{y}\Psi (  t,Y_{t} )\,\mathrm{d}Q_{t} , \mathrm{d}\mathbb{P}\otimes \mathrm{d}Q_{t}$-a.e.,
\item[(iii)]    $\mathrm{e}^{2V_{T}}\llvert  Y_{T}-\xi_{T}\rrvert
^{2}+{\int_{T}^{\infty}}\mathrm{e}^{2V_{s}}\llvert  Z_{s}-\zeta
_{s}\rrvert  ^{2}\,\mathrm{d}s  \xrightarrow{}{prob.}0$, as
$T\rightarrow\infty$
(where $V$ is given by (\ref{def V})) and
\item[(iv)]   $Y_{t}+{\int_{t}^{T}}\,\mathrm{d}K_{s}=Y_{T}+{\int_{t}^{T}}\Phi (  s,Y_{s},Z_{s} )\,\mathrm{d}Q_{s}-{\int_{t}^{T}}Z_{s}\,\mathrm{d}W_{s}$, a.s., $\forall0\leq t\leq T$.
\end{enumerate}
\end{definition}

Let $\varepsilon>0$ and the Moreau--Yosida regularization of $\varphi$ given
by $\varphi_{\varepsilon} (  y )  =\inf\{\frac{1}{2\varepsilon}\llvert  y-v\rrvert  ^{2}+\varphi (  v )  \dvtx v\in
H\} $, which is a $C^{1}$ convex function. We mention some properties
(see Br\'{e}zis \cite{br/73}, and Pardoux and R\u{a}\c{s}canu \cite{pa-ra/98} for the last one): for all $x,y\in H$
%
\begin{equation}
\label{ineq Yosida} %
\begin{array} [c]{r@{\quad}l}%
\mbox{(a)} &
\displaystyle\varphi_{\varepsilon} ( x ) =\frac{\varepsilon}{2} \bigl\llvert \nabla
\varphi_{\varepsilon}(x) \bigr\rrvert ^{2}+\varphi \bigl( x-\varepsilon
\nabla\varphi_{\varepsilon}(x) \bigr) ,
\\\noalign{\vspace*{4pt}}
\mbox{(b)} & \displaystyle\nabla\varphi_{\varepsilon}(x)=\partial
\varphi_{\varepsilon} ( x ) \in\partial\varphi \bigl( x-\varepsilon\nabla
\varphi_{\varepsilon
}(x) \bigr) ,
\\\noalign{\vspace*{4pt}}
\mbox{(c)} & \displaystyle \bigl\llvert \nabla\varphi_{\varepsilon}(x)-\nabla
\varphi_{\varepsilon
}(y) \bigr\rrvert \leq\frac{1}{\varepsilon}\llvert x-y\rrvert ,
\\\noalign{\vspace*{4pt}}
\mbox{(d)} & \displaystyle \bigl\langle \nabla\varphi_{\varepsilon}(x)-\nabla
\varphi_{\varepsilon
}(y),x-y \bigr\rangle \geq0,
\\\noalign{\vspace*{4pt}}
\mbox{(e)} & \displaystyle \bigl\langle \nabla\varphi_{\varepsilon}(x)-\nabla
\varphi_{\delta}(y),x-y \bigr\rangle \geq-(\varepsilon+\delta) \bigl\langle
\nabla\varphi_{\varepsilon}(x),\nabla\varphi_{\delta}(y) \bigr\rangle .
\end{array} %
\end{equation}
We introduce the \textit{compatibility conditions} between $\varphi,\psi$
(which have previously been used in Maticiuc and R\u{a}\c{s}canu \cite{ma-ra/10}):

\begin{enumerate}[(A$_{7}$)]
\item[(A$_{7}$)] \textit{For all }$\varepsilon>0$, $t\geq0$,
$y\in H$, $z\in L_{2} (  H_{1},H )  $
%
\begin{equation}
\label{compatib. assumption} %
\begin{array}[c]{@{\hspace*{-3pt}}r@{
\quad}l}%
\mbox{(i)} & \bigl\langle \nabla\varphi_{\varepsilon} ( y ) ,
\nabla \psi_{\varepsilon} ( y ) \bigr\rangle \geq0,
\\\noalign{\vspace*{4pt}}
\mbox{(ii)} & \bigl\langle \nabla\varphi_{\varepsilon} ( y ) ,G ( t,y ) +
\nu_{t}^{-}y \bigr\rangle \leq \bigl\llvert \nabla
\psi_{\varepsilon} ( y ) \bigr\rrvert \bigl\llvert G ( t,y ) +
\nu_{t}^{-}y \bigr\rrvert , \qquad\mathbb{P}\mbox{\textit{-a.s.},%
}
\\\noalign{\vspace*{4pt}}
\mbox{(iii)} & \bigl\langle \nabla\psi_{\varepsilon} ( y ) ,F ( t,y,z ) +
\mu_{t}^{-}y \bigr\rangle \leq \bigl\llvert \nabla
\varphi_{\varepsilon} ( y ) \bigr\rrvert \bigl\llvert F ( t,y,z ) +
\mu_{t}^{-}y \bigr\rrvert ,\qquad \mathbb{P}\mbox{\textit{-a.s.}},%
\end{array} %
\end{equation}
\textit{where }$\mu^{-}=-\min \{  \mu,0 \}  $\textit{ and }$\nu
^{-}=-\min \{  \nu,0 \}$\textit{.}
\end{enumerate}

\begin{example}
Let $H=\mathbb{R}$.

\begin{enumerate}[B.]
\item[A.] Clearly, since $\nabla\varphi_{\varepsilon}$ and
$\nabla\psi_{\varepsilon}$ are increasing monotone, we see that, if
$y (G(t,y)+\nu_{t}^{-}y)\leq0$ and $y (F(t,y,z)+\mu_{t}^{-}y)\leq0$, $\forall t,y,z$, then
compatibility assumptions (\ref{compatib. assumption}) are satisfied.

\item[B.] If $\varphi,\psi\dvtx \mathbb{R}\rightarrow(-\infty,+\infty]$
are the convexity indicator functions, that is,
\[
\varphi ( y ) =\cases{\displaystyle 0, &\quad if $y\in [ a_{1},a_{2}
]$,
\cr
\displaystyle+\infty , &\quad if $y\notin [ a_{1},a_{2}
]$, } \quad \mbox{and}\quad \psi ( y ) =\cases{\displaystyle 0, & \quad if $y\in
[ b_{1},b_{2} ]$,
\cr
\displaystyle+\infty , & \quad if $y
\notin [ b_{1},b_{2} ]$, }
\]
where $a_{1},a_{2},b_{1},b_{2}\in\mathbb{R}$ are such that
$0\in [  a_{1},a_{2} ]  \cap [  b_{1},b_{2} ]  $ (see
assumption (A$_{6}$)), then $\nabla\varphi_{\varepsilon
} (  y )  =\frac{1}{\varepsilon} [ (  y-a_{2} )
^{+}- (  a_{1}-y )  ^{+} ]$ and similar for $\nabla\psi_{\varepsilon}$.

Since (A$_{7}$)(i) is fulfilled, the compatibility assumptions become $G (  t,y )  +\nu_{t}^{-}y\geq0$, for $y\leq a_{1}$ and $G (  t,y )  +\nu_{t}^{-}y\leq0$, for $y\geq a_{2}$, and, respectively, $F (  t,y,z )
+\mu_{t}^{-}y\geq0,\mathit{ }$ for $y\leq b_{1}$ and $F (
t,y,z )  +\mu_{t}^{-}y\leq0$, for $y\geq b_{2}$.
\end{enumerate}
\end{example}

The last assumption is the following:

\begin{enumerate}[(A$_{8}$)]
\item[(A$_{8}$)] \textit{There exist the p.m.s.p.}
$\tilde{\mu},\tilde{\nu}\dvtx \Omega\times[0,\infty)\rightarrow\mathbb{R}%
$ \textit{with} $\tilde{\mu}\geq\max \{  \mu,\frac{1}{2}\mu \}
$ \textit{and} $\tilde{\nu}\geq\max \{  \nu,\frac{1}{2}\nu \},
$\textit{ such that} $\int_{0}^{T} (  \llvert \tilde{\mu}_{t}\rrvert ^{2}\,\mathrm{d}t+\llvert \tilde
{\nu}_{t}\rrvert ^{2}\,\mathrm{d}A_{t} )  <\infty, \forall T>0, \mathbb{P}$\textit{-a.s.
and\textup{,} using notation}
%
\begin{equation}
\label{def V tilde} \tilde{V}_{t}:=\int_{0}^{t}
\mathbh{1}_{ [  0,\tau ]  } ( s ) \biggl[ \biggl(\tilde{\mu}_{s}+
\frac{a}{2} \ell^{2} \biggr)\,\mathrm{d}s+\tilde{\nu
}_{s}\,\mathrm{d}A_{s} \biggr], %
\end{equation}
\textit{we suppose}
%
\begin{equation}
\label{Assumption A_8} %
\begin{array}[c]{r@{\quad}l}%
\mbox{(i)} &
\displaystyle\mathbb{E} \bigl[\mathrm{e}^{2\sup_{s\in [  0,\tau ]
}\tilde{V}_{s}} \bigl( \varphi(\eta)+\psi(
\eta) \bigr) \bigr]<\infty ,
\\\noalign{\vspace*{6pt}}
\mbox{(ii)} & \displaystyle\mathbb{E} \bigl(\mathrm{e}^{p\sup_{s\in [  0,\tau ]
}\tilde{V}_{s}} \llvert
\eta\rrvert ^{p} \bigr)+\mathbb{E} \bigl( Q_{T}^{p}
\bigr) < \infty, \qquad\forall T>0,
\\\noalign{\vspace*{6pt}}
\mbox{(iii)} &\displaystyle \mathbb{E} \biggl( {\int_{0}^{\tau}%
}\mathrm{e}^{2\tilde{V}_{s}} \Psi ( s,\xi_{s} )\,\mathrm{d}Q_{s}
\biggr) ^{p/2}+\mathbb{E} \biggl( {\int_{0}^{\tau}}
\mathrm{e}^{\tilde{V}_{s}%
} \bigl\llvert \Phi ( s,\xi_{s},
\zeta_{s} ) \bigr\rrvert\,\mathrm{d}Q_{s} \biggr) ^{p}<
\infty, \end{array} %
\end{equation}
\textit{and the locally boundedness conditions}:
%
\begin{equation}
\label{Assumption A_9} %
\begin{array}[c]{r@{\quad}l}%
\mbox{(iv)} &
\displaystyle \mathbb{E} \biggl({\int_{0}^{T}}
\mathrm{e}^{\tilde
{V}_{s}} \sup_{\llvert  y\rrvert  \leq\rho} \bigl\llvert F \bigl(s,
\mathrm{e}^{-\tilde{V}_{s}%
}y,0 \bigr)-\tilde{ \mu}_{s}y \bigr\rrvert \,
\mathrm{d}s \biggr)^{p}
\\\noalign{\vspace*{6pt}}
&\displaystyle\quad{} +\mathbb{E} \biggl({\int_{0}^{T}}
\mathrm{e}^{\tilde{V}_{s}}%
\sup_{\llvert  y\rrvert  \leq\rho} \bigl\llvert G
\bigl(s,\mathrm{e}^{-\tilde{V}_{s}%
}y \bigr)-\tilde{ \nu}_{s}y \bigr
\rrvert \,\mathrm{d}A_{s} \biggr)^{p}<\infty,\qquad \forall
T,\rho >0,
\\\noalign{\vspace*{6pt}}
\mbox{(v)} &\displaystyle \mathbb{E} {\int_{0}^{\tau}}
\mathrm{e}^{2\tilde{V}%
_{s}} \sup_{\llvert  y\rrvert  \leq\rho} \bigl\llvert F \bigl(s,
\mathrm{e}^{-\tilde{V}_{s}%
}y,0 \bigr) \bigr\rrvert ^{2}\,\mathrm{d}s
\\\noalign{\vspace*{6pt}}
&\displaystyle \quad{}+\mathbb{E} {\int_{0}^{\tau}}
\mathrm{e}^{2\tilde{V}_{s}}%
\sup_{\llvert  y\rrvert  \leq\rho} \bigl\llvert G
\bigl(s,\mathrm{e}^{-\tilde{V}_{s}%
}y \bigr) \bigr\rrvert ^{2}\,
\mathrm{d}A_{s}<\infty,\qquad \forall \rho>0. \end{array} %
\end{equation}
\end{enumerate}

\begin{remark}
We point out that the purpose of defining of the new process $\tilde{V}$ is
due to the computations; see, e.g., inequalities (\ref{approxim eq 3}) and
(\ref{boundedness for approxim 7''}) from the proof of the first main theorem,
where it is necessary to have a new process $\tilde{V}$ such that $\mathrm{d}V_{t}\leq
\mathrm{d}\tilde{V}_{t}$ and $\frac{1}{2}\,\mathrm{d}V_{t}\leq \mathrm{d}\tilde{V}_{t}$ on $[0,\infty)$.
\end{remark}

\begin{remark}
It can be choose in {(A$_{8}$)}, in particular, $\tilde{\mu}$
and $\tilde{\nu}$ such that $\tilde{\mu}=\mu^{+}=\max \{  \mu,0 \}  $
and $\tilde{\nu}=\nu^{+}=\max \{  \nu,0 \}  $. In this case
$\tilde{V}$ defined by (\ref{def V tilde}) will become non-decreasing, hence
$\sup_{s\in [  0,\tau ]  }\tilde{V}_{s}=\tilde{V}_{\tau}$ and
(\ref{Assumption A_8}) and  (\ref{Assumption A_9}) will be simplified.

We prefer to keep inequalities $\tilde{\mu}\geq\max \{  \mu,\frac{1}{2}\mu \}$ and $\tilde{\nu}\geq\max \{\nu,\frac{1}{2}
\nu \}$ in this form because we allow to $\tilde{\mu}$ and $\tilde{\nu
}$ to be negative and therefore to enlarge the class of the generators $F$ and
$G$ who satisfy (\ref{Assumption A_8}) and (\ref{Assumption A_9}) (and also we not
restrict the class of the final data~$\eta$).
\end{remark}

\section{Main result: The existence of the strong solution}\label{sec3}

We present first the definition of a solution in the strong case when the
process $K$ is absolutely continuous with respect to $\mathrm{d}Q$ (i.e., $\mathrm{d}K_{t}%
=U_{t}\,\mathrm{d}Q_{t}$ on $[0,\infty)$).

\begin{definition}\label{definition sol GBSVI}
We call $ (  Y_{t},Z_{t},U_{t} )
_{t\geq0}$ a strong solution of (\ref{GBSVI 4}) if there exist two p.m.s.p.
$U^{1}$, $U^{2}$ and $U_{t}:=\mathbh{1}_{ [  0,\tau ]  } (
t )   [  \alpha_{t}U_{t}^{1}+ (  1-\alpha_{t} )  U_{t}%
^{2} ]  $, such that $ (  Y,Z,K )  $ is a solution of
(\ref{GBSVI 4}) with $K_{t}=\int_{0}^{t}U_{s}\,\mathrm{d}Q_{s}$ and
%
\begin{equation}
\label{definition sol 1} %
\begin{array}[c]{r@{\quad}l}%
\mbox{
\emph{(i)}} & \displaystyle{\int_{0}^{T}}\llvert
U_{s} \rrvert\,\mathrm{d}Q_{s}<\infty, \qquad\mathbb{P}\mbox{-a.s., for all }T \geq0,
\\\noalign{\vspace*{6pt}}
\mbox{\emph{(ii)}} &\displaystyle U_{t}^{1}\in\partial
\varphi ( Y_{t} ) ,\qquad \mathrm{d}\mathbb{P}\otimes \mathrm{d}t
\mbox{-a.e., } U_{t}^{2}\in\partial\psi (
Y_{t} ) ,\ \mathrm{d}\mathbb{P}\otimes \mathrm{d}A_{t}
\mbox{-a.e.},
\\\noalign{\vspace*{6pt}}
\mbox{\emph{(iii)}} &\displaystyle \mathbb{E} \bigl( \mathrm{e}^{2V_{T}}
\llvert Y_{T}-\xi _{T}\rrvert ^{2} \bigr) +{
\mathbb{E}\int_{T}^{\infty}%
}
\mathrm{e}^{2V_{s}}\llvert Z_{s}-\zeta_{s}\rrvert
^{2}\,\mathrm{d}s\rightarrow 0,\qquad \mbox{as }T\rightarrow\infty,
\end{array} %
\end{equation}
where $V$ is given by (\ref{def V}).
\end{definition}

\begin{remark}
If there exists $C>0$ such that $\sup_{s\in [  0,\tau ]  }\llvert
V_{s}\rrvert  \leq C$, $\mathbb{P}$-a.s., then the condition
(\ref{definition sol 1})(iii) is equivalent to $\mathbb{E}\llvert
Y_{T}-\eta\rrvert  ^{2}+\mathbb{E}{\int_{T}^{\infty}%
}\llvert  Z_{s}\rrvert  ^{2}\,\mathrm{d}s\rightarrow0$, as $T\rightarrow\infty$.
\end{remark}

We can now formulate the first main result. In order to obtain the absolute
continuity with respect to $\mathrm{d}Q_{t}$ of the process $K$ (as in Definition
\ref{definition sol GBSVI}) it is necessary to impose a supplementary assumption:

\begin{enumerate}[(A$_{9}$)]
\item[(A$_{9}$)] \textit{There exists} $R_{0}>0$ \textit{such that, for all}
$t\geq0$,
%
\begin{equation}
\label{Assumption A_10} \mathbb{E}^{\mathcal{F}_{t}} \bigl(\mathrm{e}^{2\sup_{s\geq t}\tilde{V}_{s}%
}\llvert
\eta\rrvert ^{2} \bigr)+\mathbb{E}^{\mathcal{F}_{t}}%
\biggl({
\int_{t\wedge\tau}^{\tau}}\mathrm{e}^{\tilde{V}_{s}} \bigl
\llvert \Phi ( s,0,0 ) \bigr\rrvert\,\mathrm{d}Q_{s} \biggr)^{2}\leq
R_{0}, \qquad\mbox{a.s.} %
\end{equation}
\end{enumerate}

\begin{remark}\label{remark_(A9)}
We mention that without this assumption we are not able to
prove, among other, that there exist two processes $U^{1}$ and $U^{2}$ such
that $K_{t}=\int_{0}^{t}\mathbh{1}_{[0,\tau]}(t)[U_{t}^{1}\,\mathrm{d}t+U_{t}^{2}\,\mathrm{d}A_{t}]$ (see step $\mathrm{F}$ from the
proof of the next theorem).
\end{remark}

\begin{theorem}\label{main result 1}
Let assumptions \emph{(A$_{1}$)--(A$_{9}$)} be satisfied.
Then the backward stochastic variational inequality (\ref{GBSVI 4}) has a
unique solution $ (  Y,Z,U )  $ such that for all $T\geq0$,
%
\begin{equation}
\label{ineq of sol 1} \mathbb{E}\sup_{s\in [  0,T ]  }\mathrm{e}^{p\tilde{V}_{s}}
\llvert Y_{s}\rrvert ^{p}<\infty %
\end{equation}
and\vspace*{-2pt}
%
\begin{equation}
\label{ineq of sol 2} Y_{t}+{\int_{t}^{T}}U_{s}
\,\mathrm{d}Q_{s}=Y_{T}+{ \int_{t}^{T}%
}\Phi ( s,Y_{s},Z_{s} )\,\mathrm{d}Q_{s}-{\int
_{t}^{T}}%
Z_{s}\,
\mathrm{d}W_{s}, \qquad \mbox{a.s., }\forall t\in [ 0,T ] .%
\end{equation}
Moreover, for all $2\leq q\leq p$, there exists a constant $C=C (
a,q )  >0$ such that, for all $t\geq0$, $\mathbb{P}$-a.s.
%
\begin{equation}
\label{ineq of sol 3} %
\begin{array}[c]{r@{\quad}l}%
\mbox{
\emph{(a)}} &\displaystyle \mathrm{e}^{q\tilde{V}_{t}}\llvert Y_{t}
\rrvert ^{q}+ \mathbb{E}^{\mathcal{F}%
_{t}} \biggl({\int
_{t}^{\infty}}\mathrm{e}^{2\tilde{V}_{s}}\llvert
Z_{s}%
\rrvert ^{2}\,\mathrm{d}s
\biggr)^{q/2}
\\\noalign{\vspace*{5pt}}
& \displaystyle\quad\leq C \mathbb{E}^{\mathcal{F}_{t}} \biggl[
\mathrm{e}^{q\sup_{s\geq t}\tilde{V}_{s}%
}\llvert \eta\rrvert ^{q}+ \biggl({\int
_{t}^{{\infty}}}\mathrm{e}^{\tilde{V}_{s}%
}\bigl\llvert
\Phi ( s,0,0 ) \bigr\rrvert \,\mathrm{d}Q_{s} \biggr)^{q}
\biggr],
\\\noalign{\vspace*{5pt}}
\mbox{\emph{(b)}} & \displaystyle \mathrm{e}^{q\tilde{V}_{t}}\llvert
Y_{t}- \xi_{t}\rrvert ^{q}+\mathbb{E}^{\mathcal{F}_{t}}
\biggl( {\int_{t}^{\infty}%
}
\mathrm{e}^{2\tilde{V}_{s}}\llvert Z_{s}-\zeta_{s}\rrvert
^{2}\,\mathrm{d}s \biggr) ^{q/2}
\\\noalign{\vspace*{5pt}}
&\displaystyle\quad \leq C \mathbb{E}^{\mathcal{F}_{t}} \biggl[ \biggl({\int
_{t}%
^{\infty}}\mathrm{e}^{2\tilde{V}_{s}}\Psi
( s, \xi_{s} )\,\mathrm{d}Q_{s}%
\biggr)^{q/2}+ \biggl({ \int_{t}^{\infty}}
\mathrm{e}^{\tilde{V}_{s}} \bigl\llvert \Phi ( s, \xi_{s},
\zeta_{s} ) \bigr\rrvert\,\mathrm{d}Q_{s} \biggr)^{q}%
\biggr],
\\\noalign{\vspace*{5pt}}
\mbox{\emph{(c)}} &\displaystyle \mathbb{E} \bigl[\mathrm{e}^{2\tilde{V}_{t}}
\bigl( \varphi (Y_{t})+\psi(Y_{t}) \bigr) \bigr]\leq
\mathbb{E} \bigl[\mathrm{e}^{2\tilde{V}%
_{{\infty}}} \bigl( \varphi(\eta)+\psi(\eta) \bigr)
\bigr],
\\\noalign{\vspace*{5pt}}
\mbox{\emph{(d)}} &\displaystyle \lim_{T\rightarrow\infty}
\mathbb{E}%
\biggl[\mathrm{e}^{p\tilde{V}_{T}}\llvert Y_{T}-
\xi_{T}\rrvert ^{p}%
+ \biggl({\int
_{T}^{\infty}}\mathrm{e}^{2\tilde{V}_{s}}\llvert
Z_{s}%
-\zeta_{s}\rrvert ^{2}\,
\mathrm{d}s \biggr)^{p/2} \biggr]=0 \end{array} %
\end{equation}
and\vspace*{-2pt}
%
\begin{equation}
\label{ineq of sol 4} %
\mbox{\emph{(e)}} \quad \mathbb{E} {\int
_{0}^{\tau}} \bigl[ \mathrm{e}^{2\tilde{V}_{s}} \bigl(
\bigl\llvert U_{s}^{1}\bigr\rrvert ^{2}\,
\mathrm{d}s+\bigl\llvert U_{s}^{2}\bigr\rrvert ^{2}
\,\mathrm{d}A_{s} \bigr) \bigr] <\infty. %
\end{equation}
\end{theorem}

\begin{pf*}{Proof}
If $ (  Y,Z )  $, $(\bar
{Y},\bar{Z})$ are two solutions, in the sense of Definition
\ref{definition sol GBSVI}, that satisfy (\ref{ineq of sol 1}), then
$\mathbb{E}\sup_{s\in [  0,T ]  }\mathrm{e}^{p\tilde{V}_{s}}\llvert
Y_{s}-\bar{Y}_{s}\rrvert  ^{p}<\infty$. From (\ref{a priori estim 2}),
satisfied by the process $Y_{s}-\bar{Y}_{s}$, we conclude that
%
\begin{eqnarray}\label{approxim eq 3}
&& \bigl\langle Y_{s}-\bar{Y}_{s},\Phi (
s,Y_{s},Z_{s} ) -\Phi(s,\bar {Y}_{s},
\bar{Z}_{s})-U_{s}+\bar{U}_{s} \bigr\rangle\,\mathrm{d}Q_{s}
\nonumber\\[-8pt]\\[-8pt]\nonumber
&&\qquad \leq\llvert Y_{s}-\bar{Y}_{s}
\rrvert ^{2}\,\mathrm{d} \tilde{V}_{s}+\frac{1}{2a}
\llvert Z_{s}-\tilde{Z}_{s}\rrvert ^{2}\,
\mathrm{d}s, %
\end{eqnarray}
since $ \langle Y_{s}-\bar{Y}_{s},U_{s}-\bar{U}_{s} \rangle \geq0$,
for $ U_{s}\in\partial_{y}\Psi(s,Y_{s})$ and $\bar{U}_{s}\in\partial_{y}%
\Psi(s,\bar{Y}_{s})$, and $\mathrm{d}V_{s}\leq \mathrm{d}\tilde{V}_{s}$ on $ [
0,\tau ]  $.

Applying Proposition \ref{prop 1 Appendix} from the \hyperref[app]{Appendix}, it follows that
there exists $C=C (  a,p )  >0$ such that
\[
\mathbb{E}\sup_{s\in [  0,T ]  }\mathrm{e}^{p\tilde{V}_{s}}\llvert
Y_{s}- \bar{Y}_{s}\rrvert ^{p}+\mathbb{E}
\biggl( {\int_{0}^{T}}%
\mathrm{e}^{2\tilde{V}_{s}}\llvert Z_{s}- \bar{Z}_{s}\rrvert
^{2}\,\mathrm{d}s \biggr) ^{p/2}\leq C \mathbb{E}%
\bigl( \mathrm{e}^{p\tilde{V}_{T}}\llvert Y_{T}- \bar{Y}_{T}
\rrvert ^{p} \bigr) \xrightarrow{T\rightarrow\infty} {}0,
\]
and the uniqueness is proved.

The proof of the existence will be split into several steps.

A. \textit{Approximating problem.}
Let $n\in\mathbb{N}^{\ast}$ and $\varepsilon=1/n$. We consider the
approximating stochastic equation
%
\begin{eqnarray}
\label{approxim eq 1} %
&& Y_{t}^{n}+{\int
_{t}^{\infty}}\mathbh{1}_{ [  0,n ]
} ( s )
\nabla_{y}\Psi^{n} \bigl(s,Y_{s}^{n}
\bigr)\,\mathrm{d}Q_{s}
\nonumber
\\[-8pt]
\\[-8pt]
&&\quad=\eta +{\int_{t}^{\infty}}\mathbh{1}_{ [  0,n ]  }
( s ) \Phi \bigl(s,Y_{s}^{n},Z_{s}^{n}
\bigr)\,\mathrm{d}Q_{s} -{\int_{t}^{\infty}}Z_{s}^{n}
\,\mathrm{d}W_{s}, \qquad\mathbb{P}\mbox{-a.s., }\forall t\geq0,
\nonumber
\end{eqnarray}
or equivalent, $\mathbb{P}$-a.s.,
%
\begin{equation}
\label{approxim eq 2} \cases{\displaystyle Y_{t}^{n}+{\int
_{t}^{n}} \nabla_{y}
\Psi^{n} \bigl(s,Y_{s}^{n}%
\bigr)\,
\mathrm{d}Q_{s}
\cr
\displaystyle\quad=\mathbb{E}^{\mathcal{F}_{n}}\tilde{
\eta}+{\int_{t}^{n}%
}\Phi
\bigl(s,Y_{s}^{n},Z_{s}^{n} \bigr)\,
\mathrm{d}Q_{s}-{\int_{t}^{n}}Z_{s}^{n}
\,\mathrm{d}W_{s},\qquad \forall t\in [ 0,n ] ,
\cr
\displaystyle{
\bigl(Y_{t}^{n},Z_{t}^{n} \bigr)=(
\xi_{t},\zeta_{t}), \qquad\forall t>n,}%
}
\end{equation}
with $\Psi^{n} (  \omega,s,y )  :=\mathbh{1}_{ [  0,\tau (
\omega )   ]  } (  s )   [  \alpha_{s} (
\omega )  \varphi_{1/n} (  y )  + (  1-\alpha_{s} (
\omega )   )  \psi_{1/n} (  y )   ]  $.

We notice that $\Phi_{n} (  t,y,z )  :=\mathbh{1}_{ [
0,n ]  } (  t )   (\Phi (  t,y,z )  -\nabla_{y}%
\Psi^{n} (  t,y )   )$ satisfies inequalities
\begin{eqnarray*}
\bigl\langle y^{\prime}-y,\Phi_{n} \bigl(t,y^{\prime},z
\bigr)-\Phi_{n}%
(t,y,z) \bigr\rangle &\leq&
\mathbh{1}_{ [  0,n\wedge\tau ]  } ( t ) \bigl[ ( \mu_{t}-n )
\alpha_{t}+ ( \nu _{t}-n ) ( 1-\alpha_{t} )
\bigr] \bigl\llvert y^{\prime
}-y \bigr\rrvert ^{2}
\\
&\leq&\mathbh{1}_{ [  0,n\wedge\tau ]  } ( t ) \bigl[ \tilde{\mu}_{t}
\alpha_{t}+\tilde{\nu}_{t} ( 1-\alpha_{t} )
\bigr] \bigl\llvert y^{\prime}-y \bigr\rrvert ^{2}%
\end{eqnarray*}
and $\llvert  \Phi_{n}(t,y,z^{\prime})-\Phi_{n}(t,y,z)\rrvert
\leq\mathbh{1}_{ [  0,n\wedge\tau ]  } (  t )   \ell
\alpha_{t} \llvert  z^{\prime}-z\rrvert  $, since $\mu_{s}\leq\tilde
{\mu}_{s}$ and $\nu_{s}\leq\tilde{\nu}_{s}$ on $[0,{\infty})$.

The corresponding process $\tilde{V}_{t}^{n}$ (see definitions (\ref{def V})
and (\ref{def V tilde})) is given by
\[
\tilde{V}_{t}^{n}=\int_{0}^{t}
\mathbh{1}_{ [  0,n\wedge\tau ]
} ( s ) \biggl[ \biggl(\tilde{\mu}_{s}+
\frac{a}{2} \ell^{2}%
\biggr)\,\mathrm{d}s+\tilde{
\nu}_{s}\,\mathrm{d}A_{s} \biggr].
\]
Obviously, $\tilde{V}_{t}^{n}=\tilde{V}_{t\wedge n}$, $\forall t\geq0$.

Applying Proposition \ref{prop 3 Appendix} from the \hyperref[app]{Appendix}, with $\Phi$
replaced with $\Phi_{n}$, we deduce that equation (\ref{approxim eq 2}) has a
unique solution $(Y^{n},Z^{n})$ such that
\[
\mathbb{E}\sup_{s\in [  0,n ]  }\mathrm{e}^{p\tilde{V}_{s}} \bigl\llvert
Y_{s}^{n} \bigr\rrvert ^{p}+\mathbb{E} \biggl(
\int_{0}^{n}\mathrm{e}^{2\tilde{V}_{s}%
}\bigl\llvert
Z_{s}^{n}\bigr\rrvert ^{2}\,\mathrm{d}s \biggr)
^{p/2}<\infty,
\]
and, using (\ref{ineq_xi_zeta}), it can be prove that
\[
\mathbb{E}\sup_{s\in [  0,T ]  }\mathrm{e}^{p\tilde{V}_{s}} \bigl\llvert
Y_{s}^{n} \bigr\rrvert ^{p}+\mathbb{E} \biggl(
\int_{0}^{T}\mathrm{e}^{2\tilde{V}_{s}%
}\bigl\llvert
Z_{s}^{n}\bigr\rrvert ^{2}\,\mathrm{d}s \biggr)
^{p/2}<\infty, \qquad\mbox{for all }T\geq0.
\]

B. \textit{Boundedness of }$Y^{n}$ \textit{and} $Z^{n}$.
Since $\varphi^{n},\psi^{n}$ are convex functions and it is assumed that
$\varphi (  0 )  =\psi (  0 )  =0$, we see that
$\langle\nabla_{y}\Psi^{n} (  t,y )  ,y\rangle\geq0$, $\forall y\in
H$, and therefore (\ref{a priori estim 1}) becomes
\[
\bigl\langle Y_{t}^{n},\Phi_{n}
\bigl(t,Y_{t}^{n},Z_{t}^{n} \bigr)\,
\mathrm{d}Q_{t} \bigr\rangle\leq \mathbh{1}_{ [  0,n ]  } ( t ) \bigl
\llvert Y_{t}^{n}\bigr\rrvert \bigl\llvert \Phi ( t,0,0 )
\bigr\rrvert \,\mathrm{d}Q_{t}+\bigl\llvert Y_{t}^{n}
\bigr\rrvert ^{2}\,\mathrm{d} \tilde{V}_{t}^{n}+
\frac{1}{2a}%
\bigl\llvert Z_{t}^{n}\bigr
\rrvert ^{2}\,\mathrm{d}t.
\]
Equation (\ref{approxim eq 1}) can be written, for any $T\geq0$, in the form
\[
Y_{t}^{n}=Y_{T}^{n}+{\int
_{t}^{T}}\Phi_{n} \bigl(s,Y_{s}^{n},Z_{s}^{n}
\bigr)\,\mathrm{d}Q_{s}%
-{\int_{t}^{T}}Z_{s}^{n}
\,\mathrm{d}W_{s}, \qquad\mathbb{P}\mbox{-a.s., }\forall t\in [ 0,T ] .
\]
Applying Proposition \ref{prop 1 Appendix} (see the \hyperref[app]{Appendix}), we deduce that,
for all $q\in[2,p]$, there exists a constant $C=C (  a,q )  >0$
such that such that, $\mathbb{P}$-a.s., for all $0\leq t\leq T\leq n$,
\begin{eqnarray*}
&&\mathbb{E}^{\mathcal{F}_{t}}\sup_{s\in [  t,T ]  }\mathrm{e}^{q\tilde{V}_{s}%
}
\bigl\llvert Y_{s}^{n} \bigr\rrvert ^{q}+
\mathbb{E}^{\mathcal{F}_{t}%
} \biggl({\int_{t}^{T}}
\mathrm{e}^{2\tilde{V}_{s}}\bigl\llvert Z_{s}^{n}\bigr\rrvert
^{2}\,\mathrm{d}s \biggr)^{q/2}
\\
&&\quad\leq C \mathbb{E}^{\mathcal{F}_{t}} \biggl[\mathrm{e}^{q\tilde{V}_{T}}\bigl
\llvert Y_{T}^{n}%
\bigr\rrvert ^{q}+
\biggl({\int_{t}^{T}} \mathbh{1}_{ [  0,n ]  } (
s ) \mathrm{e}^{\tilde{V}_{s}}\bigl\llvert \Phi ( s,0,0 ) \bigr\rrvert \,
\mathrm{d}Q_{s} \biggr)^{q}%
\biggr]
\\
&&\quad\leq C \mathbb{E}^{\mathcal{F}_{t}} \biggl[\mathrm{e}^{q\sup_{s\in [  t,T ]
}\tilde{V}_{s}}\llvert
\xi_{T}\rrvert ^{q}+ \biggl({\int_{t}^{{\infty}}%
}\mathbh{1}_{ [  0,n ]  } ( s ) \mathrm{e}^{\tilde{V}_{s}}%
\bigl
\llvert \Phi ( s,0,0 ) \bigr\rrvert \,\mathrm{d}Q_{s}
\biggr)^{q} \biggr]
\\
&&\quad\leq C \mathbb{E}^{\mathcal{F}_{t}} \biggl[\mathrm{e}^{q\sup_{s\in [  t,T ]
}\tilde{V}_{s}}\llvert
\eta\rrvert ^{q}+ \biggl({\int_{t}^{{\infty}}%
}\mathbh{1}_{ [  0,n ]  } ( s ) \mathrm{e}^{\tilde{V}_{s}}%
\bigl
\llvert \Phi ( s,0,0 ) \bigr\rrvert \,\mathrm{d}Q_{s}
\biggr)^{q} \biggr],
\end{eqnarray*}
since by Jensen's inequality we have $\llvert \xi_{T}\rrvert ^{q}=\llvert \mathbb{E}^{\mathcal{F}%
_{T}}\eta\rrvert ^{q}\leq\mathbb{E}^{\mathcal{F}_{T}}\llvert  \eta\rrvert  ^{q}$.

Using (\ref{ineq_xi_zeta}), it can be proved that the above inequality holds
also for all $0\leq t\vee n\leq T$.

Passing to limit as $T\rightarrow\infty$ we infer, using Beppo Levi's theorem,
that $\mathbb{P}$-a.s.
%
\begin{eqnarray}
\label{boundedness for approxim}%
&&\mathbb{E}^{\mathcal{F}_{t}}\sup_{s\geq t}
\mathrm{e}^{q\tilde{V}_{s}}\bigl\llvert Y_{s}^{n}%
\bigr\rrvert ^{q}+\mathbb{E}^{\mathcal{F}_{t}} \biggl( {\int
_{t}%
^{{\infty}}}\mathrm{e}^{2\tilde{V}_{s}}
\bigl\llvert Z_{s}^{n}\bigr\rrvert ^{2}\,
\mathrm{d}s \biggr) ^{q/2}%
\nonumber
\\[-8pt]
\\[-8pt]
&&\quad\leq C \mathbb{E}^{\mathcal{F}_{t}} \biggl[ \bigl(\mathrm{e}^{q\sup_{s\in [  t,\tau ]
}\tilde{V}_{s}}
\llvert \eta\rrvert ^{q} \bigr)+ \biggl({\int_{t}^{{\infty}}%
}\mathbh{1}_{ [  0,n ]  } ( s ) \mathrm{e}^{\tilde{V}_{s}}%
\bigl
\llvert \Phi ( s,0,0 ) \bigr\rrvert \,\mathrm{d}Q_{s}
\biggr)^{q} \biggr].
\nonumber
\end{eqnarray}
In particular, for $q=2$, there exists another constant $C\geq1$ such that,
for all $t\geq0$,
%
\begin{eqnarray}
\label{boundedness for approxim 2}%
\mathrm{e}^{2\tilde{V}_{t}}\bigl\llvert
Y_{t}^{n}\bigr\rrvert ^{2}& \leq& C
\mathbb{E}^{\mathcal{F}_{t}%
} \biggl[ \bigl(\mathrm{e}^{2\sup_{s\in [  t,\tau ]  }\tilde{V}_{s}}\llvert \eta
\rrvert ^{2} \bigr)\nonumber\\
&&{}+ \biggl({\int_{t}^{{\infty}}}
\mathbh{1}_{ [
0,n ]  } ( s ) \mathrm{e}^{\tilde{V}_{s}}\bigl\llvert \Phi (
s,0,0 ) \bigr\rrvert \,\mathrm{d}Q_{s} \biggr)^{2} \biggr]
\\
&\leq&2CR_{0}^{2}=R_{0}^{\prime}, \qquad
\mathbb{P}\mbox{-a.s.,}
\nonumber
\end{eqnarray}
where $R_{0}$ is given by (\ref{Assumption A_10}).

\begin{remark}\label{remark_(A9)'}
We emphasis that we just use, for the first time,
assumption {(A$_{9}$)} and the obtained inequality
(\ref{boundedness for approxim 2}) will be essential in order to deduce, using
assumption {(A$_{8}$)} of locally boundedness for the generators, the
subsequent step $\mathrm{D}$ (i.e., the boundedness of the gradient of
$\Psi^{n}$) and what follows afterwards.
\end{remark}

\textrm{C.} \textit{Other boundedness results on }$Y^{n}$ \textit{and} $Z^{n}$.
Since for all $u\in H$, $\langle u-y,\nabla_{y}\Psi^{n} (  t,y )
\rangle\leq\Psi^{n} (  t,u )  -\Psi^{n} (  t,y )  $, we can
deduce (see inequality (\ref{a priori estim 2})) that, as signed measures on
$ [  0,n ]  $,
%
\begin{eqnarray}
\label{boundedness for approxim 3}%
&& \bigl\langle Y_{t}^{n}-
\xi_{t},\Phi_{n} \bigl(s,Y_{s}^{n},Z_{s}^{n}%
\bigr) \bigr\rangle\,\mathrm{d}Q_{t}
\nonumber
\\
&&\quad\leq \bigl[ \Psi^{n}(t,\xi_{t})-\Psi^{n}
\bigl(t,Y_{t}%
^{n} \bigr) \bigr]\,
\mathrm{d}Q_{t}
\\
&&\qquad{} +\bigl\llvert Y_{t}^{n}-\xi_{t}\bigr
\rrvert \bigl\llvert \Phi(t, \xi_{t},\zeta_{t})\bigr\rrvert
\,\mathrm{d}Q_{t}+\bigl\llvert Y_{t}^{n}%
\bigr\rrvert ^{2}\,\mathrm{d}\tilde{V}_{t}^{n}+
\frac{1}{2a}\bigl\llvert Z_{t}^{n}-
\zeta_{t}\bigr\rrvert ^{2}\,\mathrm{d}t.
\nonumber
\end{eqnarray}
But $0\leq\Psi^{n}(t,\xi_{t})\leq\Psi(t,\xi_{t})=\mathbh{1}_{ [
0,\tau (  \omega )   ]  } (  t )   [  \alpha
_{t} (  \omega )  \varphi(\xi_{t})+ (  1-\alpha_{t} (
\omega )   )  \psi(\xi_{t}) ]  $, therefore
(\ref{boundedness for approxim 3}) becomes
\begin{eqnarray*}
&& \Psi^{n} \bigl(t,Y_{t}^{n} \bigr)\,
\mathrm{d}Q_{t}+ \bigl\langle Y_{t}^{n}-
\xi_{t},\Phi_{n} \bigl(s,Y_{s}%
^{n},Z_{s}^{n} \bigr) \bigr\rangle\,\mathrm{d}Q_{t}
\\
&&\quad\leq\Psi(t,\xi_{t})\,\mathrm{d}Q_{t}+\bigl\llvert
Y_{t}^{n}- \xi_{t}\bigr\rrvert \bigl\llvert
\Phi(t,\xi_{t},\zeta _{t})\bigr\rrvert \,
\mathrm{d}Q_{t}+\bigl\llvert Y_{t}^{n}\bigr
\rrvert ^{2}\,\mathrm{d} \tilde{V}_{t}^{n}+
\frac{1}{2a}\bigl\llvert Z_{t}^{n}%
-
\zeta_{t}\bigr\rrvert ^{2}\,\mathrm{d}t.
\end{eqnarray*}
From (\ref{approxim eq 2}), we see that $(Y^{n},Z^{n})$ satisfies the equation
\[
Y_{t}^{n}-\xi_{t}={\int_{t}^{n}}
\Phi_{n} \bigl(s,Y_{s}^{n},Z_{s}^{n}
\bigr)\,\mathrm{d}Q_{s}%
-{\int_{t}^{n}
\bigl(}Z_{s}^{n}-\zeta_{s} \bigr)\,
\mathrm{d}W_{s}, \qquad\forall t\in [ 0,n ] ,
\]
since $\xi_{t}=\xi_{n}-{\int_{t}^{n}}\zeta_{s}\,\mathrm{d}W_{s}$, $\forall t\in [
0,n ]  $.

Applying again Proposition \ref{prop 1 Appendix}, there exists a constant
$C=C (  a,p )  >0$ such that, $\mathbb{P}$-a.s., for all
$t\in [  0,n ]  $,
\begin{eqnarray*}
&&\mathbb{E}^{\mathcal{F}_{t}}\sup_{s\in [  t,n ]  }\mathrm{e}^{p\tilde{V}_{s}%
}
\bigl\llvert Y_{s}^{n}-\xi_{s} \bigr\rrvert
^{p}+\mathbb{E}^{\mathcal{F}_{t}%
} \biggl({\int_{t}^{n}}
\mathrm{e}^{2\tilde{V}_{s}}\bigl\llvert Z_{s}^{n}-
\zeta_{s}%
\bigr\rrvert ^{2}\,\mathrm{d}s
\biggr)^{p/2}
\\
&&\qquad{}+\mathbb{E} \biggl({\int_{t}^{n}}
\mathrm{e}^{2\tilde{V}_{s}} \Psi ^{n} \bigl(s,Y_{s}^{n}
\bigr)\,\mathrm{d}Q_{s} \biggr)^{p/2}
\\
&&\quad\leq C \mathbb{E}^{\mathcal{F}_{t}} \biggl[ \biggl({\int
_{t}^{n}%
}\mathrm{e}^{2\tilde{V}_{s}}
\Psi(s, \xi_{s})\,\mathrm{d}Q_{s} \biggr)^{p/2}+
\biggl({\int_{t}^{n}}\mathrm{e}^{\tilde{V}_{s}}
\bigl\llvert \Phi(s, \xi_{s},\zeta_{s})\bigr\rrvert \,
\mathrm{d}Q_{s} \biggr)^{p} \biggr].
\end{eqnarray*}
Therefore
%
\begin{eqnarray}
\label{boundedness for approxim 4}%
&&\mathbb{E}^{\mathcal{F}_{t}}\sup_{s\geq t}
\mathrm{e}^{p\tilde{V}_{s}}\bigl\llvert Y_{s}^{n}- \xi
_{s}\bigr\rrvert ^{p}+\mathbb{E}^{\mathcal{F}_{t}} \biggl({
\int_{t}%
^{{\infty}}}\mathrm{e}^{2\tilde{V}_{s}}
\bigl\llvert Z_{s}^{n}- \zeta_{s}\bigr\rrvert
^{2}\,\mathrm{d}s \biggr)^{p/2}%
\nonumber
\\[-8pt]
\\[-8pt]
&&\quad\leq C \mathbb{E}^{\mathcal{F}_{t}} \biggl[ \biggl({\int
_{t}^{{\infty}}}\mathrm{e}^{2\tilde{V}_{s}}\Psi(s,
\xi_{s})\,\mathrm{d}Q_{s} \biggr)^{p/2}%
+ \biggl({ \int_{t}^{{\infty}}}\mathrm{e}^{\tilde{V}_{s}}
\bigl\llvert \Phi (s, \xi_{s},\zeta_{s})\bigr\rrvert \,
\mathrm{d}Q_{s} \biggr)^{p} \biggr]
\nonumber
\end{eqnarray}
since $(Y_{s}^{n},Z_{s}^{n})=(\xi_{s},\zeta_{s})$ for $s>n$.

D. \textit{Boundedness of }$\nabla\varphi_{1/n}(Y_{t}^{n}%
)$\textit{ and }$\nabla\psi_{1/n}(Y_{t}^{n})$.
In order to obtain the boundedness for $\llvert \nabla\varphi_{1/n}(Y_{s}^{n})\rrvert ^{2}$
it is essential to use the following stochastic subdifferential inequality
(see Proposition 11 in Maticiuc and R\u{a}\c{s}canu \cite{ma-ra/10}), written first for $\varphi_{1/n}$:
for all $0\leq t\leq s\leq n$
\[
\mathrm{e}^{2\tilde{V}_{s}}\varphi_{1/n} \bigl(Y_{s}^{n}
\bigr)\geq \mathrm{e}^{2\tilde{V}_{t}}\varphi _{1/n}
\bigl(Y_{t}^{n} \bigr)+{\int_{t}^{s}}
\varphi_{1/n} \bigl(Y_{r}^{n} \bigr)\,\mathrm{d}
\bigl(\mathrm{e}^{2\tilde{V}_{r}%
} \bigr)+{\int_{t}^{s}}
\mathrm{e}^{2\tilde{V}_{r}}\nabla \varphi_{1/n} \bigl(Y_{r}^{n}
\bigr)\,\mathrm{d}Y_{r}^{n}.
\]
Hence,
\[
\mathrm{e}^{2\tilde{V}_{s}}\varphi_{1/n} \bigl(Y_{s}^{n}
\bigr)\geq \mathrm{e}^{2\tilde{V}_{t}}\varphi _{1/n}
\bigl(Y_{t}^{n} \bigr)+2{\int_{t}^{s}}
\mathrm{e}^{2\tilde{V}_{r}} \varphi_{1/n} \bigl(Y_{r}%
^{n} \bigr)\,\mathrm{d} \tilde{V}_{r}+{\int
_{t}^{s}}\mathrm{e}^{2\tilde{V}_{r}} \nabla
\varphi_{1/n}%
\bigl(Y_{r}^{n} \bigr)\,
\mathrm{d}Y_{r}^{n}.
\]
It follows that, $\mathbb{P}$-a.s. for all $0\leq t\leq s\leq n$,
\begin{eqnarray*}
&&\mathrm{e}^{2\tilde{V}_{t}}\varphi_{1/n} \bigl(Y_{t}^{n}
\bigr)+{\int_{t}^{s}%
}
\mathrm{e}^{2\tilde{V}_{r}} \bigl\langle\nabla\varphi_{1/n}
\bigl(Y_{r}^{n} \bigr),\mathbh{1}%
_{ [  0,n ]  } ( r ) \nabla_{y} \Psi^{n} \bigl(
r,Y_{r}%
^{n} \bigr) \bigr\rangle\,\mathrm{d}Q_{r}
\\
&&\quad\leq \mathrm{e}^{2\tilde{V}_{s}}\varphi_{1/n}
\bigl(Y_{s}^{n} \bigr)+{\int_{t}^{s}%
}\mathrm{e}^{2\tilde{V}_{r}} \bigl\langle\nabla\varphi_{1/n}
\bigl(Y_{r}^{n} \bigr),\mathbh{1}%
_{ [  0,n ]  } ( r ) \Phi \bigl(r,Y_{r}^{n},Z_{r}^{n}%
\bigr)\,\mathrm{d}Q_{r} \bigr\rangle
\\
&&\qquad{}-{\int_{t}^{s}}\mathrm{e}^{2\tilde{V}_{r}}
\bigl\langle\nabla \varphi_{1/n} \bigl(Y_{r}^{n}
\bigr),Z_{r}^{n}\,\mathrm{d}W_{r} \bigr
\rangle-2{\int_{t}^{s}}\mathrm{e}^{2\tilde{V}_{r}}
\varphi_{1/n} \bigl(Y_{r}^{n} \bigr)\,\mathrm{d}
\tilde{V}_{r}%
\end{eqnarray*}
(and a similar inequality for $\psi_{1/n}$).

Since
%
\begin{equation}
\label{positivity of phi} \varphi_{1/n}(0)+\psi_{1/n}(0)=0\leq
\varphi_{1/n}(y)+\psi_{1/n}(y)\leq \varphi(y)+\psi(y), \qquad
\forall y\in H, %
\end{equation}
we infer that
%
\begin{eqnarray}
\label{boundedness for approxim 5} %
&&\mathrm{e}^{2\tilde{V}_{t}} \bigl[
\varphi_{1/n} \bigl(Y_{t}^{n} \bigr)+
\psi_{1/n} \bigl(Y_{t}%
^{n} \bigr)
\bigr] +{\int_{t}^{s}}\mathbh{1}_{ [  0,n\wedge
\tau ]  } (
r ) \mathrm{e}^{2\tilde{V}_{r}} \bigl[\alpha_{r}%
\bigl
\llvert \nabla \varphi_{1/n} \bigl(Y_{r}^{n}
\bigr)\bigr\rrvert ^{2}
\nonumber
\\
&&\qquad{}+ \bigl\langle\nabla\varphi_{1/n} \bigl(Y_{r}^{n}
\bigr),\nabla\psi_{1/n} \bigl(Y_{r}%
^{n} \bigr) \bigr\rangle+ ( 1-\alpha_{r} ) \bigl\llvert
\nabla \psi_{1/n} \bigl(Y_{r}%
^{n}
\bigr)\bigr\rrvert ^{2} \bigr]\,\mathrm{d}Q_{r}
\nonumber
\\
&&\quad\leq \mathrm{e}^{2\tilde{V}_{s}} \bigl[ \varphi \bigl(Y_{s}^{n}
\bigr)+ \psi \bigl(Y_{s}^{n} \bigr) \bigr] -2{\int
_{t}^{s}}\mathrm{e}^{2\tilde{V}_{r}} \bigl[
\varphi_{1/n} \bigl(Y_{r}%
^{n} \bigr)+
\psi_{1/n} \bigl(Y_{r}^{n} \bigr) \bigr]\,
\mathrm{d} \tilde{V}_{r}
\\
&&\qquad{}-{\int_{t}^{s}}\mathrm{e}^{2\tilde{V}_{r}}
\bigl\langle \nabla \varphi_{1/n} \bigl(Y_{r}^{n}
\bigr)+\nabla \psi_{1/n} \bigl(Y_{r}^{n}
\bigr),Z_{r}^{n}\,\mathrm{d}W_{r}%
\bigr\rangle
\nonumber
\\
&&\qquad{}+{\int_{t}^{s}}\mathrm{e}^{2\tilde{V}_{r}}
\bigl\langle \nabla \varphi_{1/n} \bigl(Y_{r}^{n}
\bigr)+\nabla \psi_{1/n} \bigl(Y_{r}^{n} \bigr),
\mathbh{1}_{ [
0,n ]  } ( r ) \Phi \bigl(r,Y_{r}^{n},Z_{r}^{n}
\bigr) \bigr\rangle\,\mathrm{d}Q_{r}.
\nonumber
\end{eqnarray}
Using definition of $\Phi$, compatibility assumptions
(\ref{compatib. assumption}) gives us
%
\begin{eqnarray}
\label{boundedness for approxim 6} %
&& \bigl\langle \nabla \varphi_{\varepsilon}(y),
\Phi(t,y,z) \bigr\rangle
\nonumber
\\
&&\quad=\mathbh{1}_{ [  0,\tau ]  } ( t ) \bigl\langle \nabla
\varphi_{\varepsilon}(y),\alpha_{t}F(t,y,z)+ ( 1-\alpha_{t}
) G(t,y) \bigr\rangle
\nonumber
\\[-8pt]
\\[-8pt]
&&\quad\leq\mathbh{1}_{ [  0,\tau ]  } ( t ) \bigl(\alpha _{t}\bigl
\llvert F(t,y,z)\bigr\rrvert \bigl\llvert \nabla\varphi_{\varepsilon}(y)\bigr
\rrvert + ( 1-\alpha_{t} ) \bigl\llvert G(t,y)\bigr\rrvert \bigl\llvert
\nabla \psi_{\varepsilon}(y)\bigr\rrvert
\nonumber
\\
&&\qquad{}+ ( 1-\alpha_{t} ) \nu_{t}^{-}\llvert
y\rrvert \bigl\llvert \nabla \psi_{\varepsilon
}(y)\bigr\rrvert - ( 1-
\alpha_{t} ) \nu_{t}^{-} \bigl\langle \nabla
\varphi_{\varepsilon}(y),y \bigr\rangle \bigr)
\nonumber
\end{eqnarray}
and respectively,
%
\begin{eqnarray}
\label{boundedness for approxim 6'} %
&& \bigl\langle \nabla
\psi_{\varepsilon}(y),\Phi(t,y,z) \bigr\rangle
\nonumber
\\
&&\quad =\mathbh{1}_{ [  0,\tau ]  } ( t ) \bigl\langle \nabla
\psi_{\varepsilon}(y),\alpha_{t}F(t,y,z)+ ( 1- \alpha_{t} )
G(t,y) \bigr\rangle
\nonumber
\\[-8pt]
\\[-8pt]
&&\quad\leq\mathbh{1}_{ [  0,\tau ]  } ( t ) \bigl(\alpha _{t}\bigl
\llvert F(t,y,z)\bigr\rrvert \bigl\llvert \nabla\varphi_{\varepsilon}(y)\bigr
\rrvert + ( 1-\alpha_{t} ) \bigl\llvert G(t,y)\bigr\rrvert \bigl\llvert
\nabla \psi_{\varepsilon}(y)\bigr\rrvert
\nonumber
\\
&&\qquad{} +\alpha_{t}\mu_{t}^{-}\llvert y
\rrvert \bigl\llvert \nabla \varphi_{\varepsilon}(y)\bigr\rrvert -
\alpha_{t}%
\mu_{t}^{-} \bigl\langle
\nabla\psi_{\varepsilon}(y),y \bigr\rangle \bigr).
\nonumber
\end{eqnarray}
From (\ref{compatib. assumption})(i), (\ref{boundedness for approxim 2}),
(\ref{boundedness for approxim 5})--(\ref{boundedness for approxim 6'}) and
inequality $2ab\leq\frac{1}{\alpha}a^{2}+\alpha b^{2}$ with $\alpha\in \{
2,4 \}  $, we obtain
%
\begin{eqnarray}
\label{boundedness for approxim 7} %
&&\mathrm{e}^{2\tilde{V}_{t}} \bigl[
\varphi_{1/n} \bigl(Y_{t}^{n} \bigr)+
\psi_{1/n} \bigl(Y_{t}%
^{n} \bigr)
\bigr] +\frac{1}{2}{\int_{t}^{s}}
\mathbh{1}_{ [
0,n\wedge\tau ]  } ( r ) \mathrm{e}^{2\tilde{V}_{r}} \bigl[\bigl\llvert
\nabla \varphi_{1/n} \bigl(Y_{r}^{n} \bigr)\bigr
\rrvert ^{2}\,\mathrm{d}r+\bigl\llvert \nabla\psi_{1/n}
\bigl(Y_{r}^{n} \bigr)\bigr\rrvert ^{2}\,
\mathrm{d}A_{r} \bigr]
\nonumber
\\
&&\quad\leq \mathrm{e}^{2\tilde{V}_{s}} \bigl[ \varphi \bigl(Y_{s}^{n}
\bigr)+ \psi \bigl(Y_{s}^{n} \bigr) \bigr] +{\int
_{t}^{s}} \mathbh{1}_{ [  0,n\wedge\tau ]  } ( r )
\mathrm{e}^{2\tilde{V}_{r}}\bigl\llvert Y_{r}^{n}\bigr\rrvert
^{2} \bigl[\bigl\llvert \mu_{r}^{-}\bigr\rrvert
^{2}\,\mathrm{d}r+\bigl\llvert \nu _{r}^{-}\bigr
\rrvert ^{2}\,\mathrm{d}A_{r} \bigr]
\nonumber
\\
&&\qquad{}+{\int_{t}^{s}}\mathbh{1}_{ [  0,n\wedge\tau ]
}
( r ) 4\mathrm{e}^{2\tilde{V}_{r}} \bigl[\bigl\llvert F \bigl(r,Y_{r}^{n},Z_{r}^{n}%
\bigr)\bigr\rrvert ^{2}\,\mathrm{d}r+\bigl\llvert G
\bigl(r,Y_{r}^{n} \bigr)\bigr\rrvert ^{2}\,
\mathrm{d}A_{r} \bigr]
\\
&&\qquad{}-{\int_{t}^{s}}\mathbh{1}_{ [  0,n\wedge\tau ]
}
( r ) \mathrm{e}^{2\tilde{V}_{r}} \bigl[ \mu_{r}^{-} \bigl
\langle\nabla \psi_{1/n} \bigl(Y_{r}^{n}
\bigr),Y_{r}^{n} \bigr\rangle \,\mathrm{d}r+\nu_{r}^{-}
\bigl\langle\nabla \varphi_{1/n} \bigl(Y_{r}^{n}
\bigr),Y_{r}^{n} \bigr\rangle\,\mathrm{d}A_{r} \bigr]
\nonumber
\\
&&\qquad{}-{\int_{t}^{s}}\mathrm{e}^{2\tilde{V}_{r}}
\bigl\langle \nabla \varphi_{1/n} \bigl(Y_{r}^{n}
\bigr)+\nabla \psi_{1/n} \bigl(Y_{r}^{n}
\bigr),Z_{r}^{n}\,\mathrm{d}W_{r}%
\bigr\rangle-2{\int_{t}^{s}}\mathrm{e}^{2\tilde{V}_{r}}
\bigl[ \varphi_{1/n} \bigl(Y_{r}^{n} \bigr)+
\psi_{1/n} \bigl(Y_{r}^{n} \bigr) \bigr]\,
\mathrm{d} \tilde{V}_{r}.
\nonumber
\end{eqnarray}
Using (\ref{positivity of phi}), the definition of $\tilde{V}$ and inequality
$0\leq\varphi_{1/n} (  y )  \leq \langle \nabla\varphi
_{1/n}(y),y \rangle , \forall y\in H$, we have
%
\begin{eqnarray}
\label{boundedness for approxim 7''} %
&&{-}\mu_{r}^{-}
\bigl\langle\nabla\psi_{1/n} \bigl(Y_{r}^{n}
\bigr),Y_{r}^{n} \bigr\rangle\, \mathrm{d}r-\nu _{r}^{-}
\bigl\langle\nabla\varphi_{1/n} \bigl(Y_{r}^{n}
\bigr),Y_{r}^{n} \bigr\rangle\,\mathrm{d}A_{r}%
-2 \bigl[ \varphi_{1/n} \bigl(Y_{r}^{n} \bigr)+
\psi_{1/n} \bigl(Y_{r}^{n} \bigr) \bigr]\,
\mathrm{d}\tilde {V}_{r}
\nonumber
\\
&&\quad\leq-\mu_{r}^{-} \bigl\langle\nabla
\psi_{1/n} \bigl(Y_{r}^{n} \bigr),Y_{r}^{n}%
\bigr\rangle\, \mathrm{d}r-\nu_{r}^{-} \bigl\langle\nabla
\varphi_{1/n} \bigl(Y_{r}^{n}
\bigr),Y_{r}%
^{n} \bigr\rangle\,\mathrm{d}A_{r}
\nonumber
\\[-8pt]
\\[-8pt]
&&\qquad{}+ \bigl[ \varphi_{1/n} \bigl(Y_{r}^{n}
\bigr)+ \psi_{1/n} \bigl(Y_{r}^{n} \bigr) \bigr]
\bigl[ \mu_{r}^{-}\,\mathrm{d}r+\nu_{r}^{-}
\,\mathrm{d}A_{r} \bigr]
\nonumber
\\
&&\quad\leq \bigl\langle\nabla\varphi_{1/n} \bigl(Y_{r}^{n}
\bigr),Y_{r}^{n} \bigr\rangle\mu_{r}%
^{-}\,\mathrm{d}r+ \bigl\langle\nabla\psi_{1/n}
\bigl(Y_{r}^{n} \bigr),Y_{r}^{n}
\bigr\rangle\nu_{r}%
^{-}\,\mathrm{d}A_{r},
\nonumber
\end{eqnarray}
since $2\mathrm{d}\tilde{V}_{r}\geq \mathrm{d}V_{r}\geq-\mu_{r}^{-}\,\mathrm{d}r-\nu_{r}^{-}\,\mathrm{d}A_{r}$ on
$ [  0,\tau ]  $.

Hence,
%
\begin{eqnarray}
\label{boundedness for approxim 7'} %
&&{-} {\int_{t}^{s}}
\mathbh{1}_{ [  0,n\wedge\tau ]  } ( r ) \mathrm{e}^{2\tilde{V}_{r}} \bigl[
\mu_{r}^{-} \bigl\langle\nabla\psi _{1/n}
\bigl(Y_{r}^{n} \bigr),Y_{r}^{n}
\bigr\rangle \,\mathrm{d}r+\nu_{r}^{-} \bigl\langle\nabla
\varphi_{1/n} \bigl(Y_{r}^{n}
\bigr),Y_{r}^{n} \bigr\rangle\,\mathrm{d}A_{r} \bigr]
\nonumber
\\
&&\qquad{-}\,2{\int_{t}^{s}}\mathrm{e}^{2\tilde{V}_{r}}
\bigl[ \varphi _{1/n} \bigl(Y_{r}^{n} \bigr)+
\psi_{1/n} \bigl(Y_{r}^{n} \bigr) \bigr]\,
\mathrm{d}\tilde{V}_{r}
\nonumber
\\[-8pt]
\\[-8pt]
&&\quad\leq{\int_{t}^{s}}\mathbh{1}_{ [  0,n\wedge\tau ]
}
( r ) \mathrm{e}^{2\tilde{V}_{r}} \biggl[\frac{1}{4}\bigl\llvert \nabla
\varphi _{1/n} \bigl(Y_{r}^{n} \bigr)\bigr\rrvert
^{2}\,\mathrm{d}r+\frac{1}{4}\bigl\llvert \nabla
\psi_{1/n} \bigl(Y_{r}^{n} \bigr)\bigr\rrvert
^{2}\,\mathrm{d}A_{r} \biggr]
\nonumber
\\
&&\qquad{}+{\int_{t}^{s}}\mathbh{1}_{ [  0,n\wedge\tau ]
}
( r ) \mathrm{e}^{2\tilde{V}_{r}}\bigl\llvert Y_{r}^{n}\bigr
\rrvert ^{2} \bigl[ \bigl\llvert \mu _{r}^{-}
\bigr\rrvert ^{2}\,\mathrm{d}r+ \bigl\llvert \nu_{r}^{-}
\bigr\rrvert ^{2}\,\mathrm{d}A_{r} \bigr].
\nonumber
\end{eqnarray}
Moreover, we see that $\mathbb{E}^{\mathcal{F}_{t}}\int_{t}%
^{s}\mathrm{e}^{2\tilde{V}_{r}} \langle \nabla\varphi_{1/n}(Y_{r}^{n})+\nabla
\psi_{1/n}(Y_{r}^{n}),Z_{r}^{n}\,\mathrm{d}W_{r} \rangle =0$, because
\begin{eqnarray*}
&&\mathbb{E} \biggl( {\int_{t}^{s}}
\mathrm{e}^{4\tilde{V}_{r}} \bigl( \bigl\llvert \nabla \varphi_{1/n}
\bigl(Y_{r}^{n} \bigr)\bigr\rrvert +\bigl\llvert \nabla
\psi_{1/n} \bigl(Y_{r}^{n} \bigr)\bigr\rrvert
\bigr) ^{2}\bigl\llvert Z_{r}^{n}\bigr\rrvert
^{2}\,\mathrm{d}r \biggr) ^{1/2}
\\
&&\quad\leq\mathbb{E} \biggl[ \sup_{r\in [  t,s ]  } \bigl(2n
\mathrm{e}^{\tilde{V}_{r}%
} \bigl\llvert Y_{r}^{n} \bigr
\rrvert \bigr) \biggl( {\int_{t}^{s}%
}
\mathrm{e}^{2\tilde{V}_{r}}\bigl\llvert Z_{r}^{n}\bigr\rrvert
^{2}\,\mathrm{d}r \biggr) ^{1/2} \biggr]
\\
&&\quad\leq2\sqrt{2}n\sqrt{R_{0}^{\prime}} \biggl[
\mathbb{E} \biggl( {\int_{t}^{s}}
\mathrm{e}^{2\tilde{V}_{r}}\bigl\llvert Z_{r}^{n}- \tilde{
\zeta}_{r}%
\bigr\rrvert ^{2}\,\mathrm{d}r \biggr)
^{1/2}+\mathbb{E} \biggl( {\int_{t}^{s}}%
\mathrm{e}^{2\tilde{V}_{r}}\llvert \tilde{\zeta}_{r}\rrvert
^{2}\,\mathrm{d}r \biggr) ^{1/2} \biggr]
\\
&&\quad <\infty.
\end{eqnarray*}
For $s=n$, Jensen's inequality yields
\[
\mathbb{E} \bigl[ \mathrm{e}^{2\tilde{V}_{s}}\varphi \bigl(Y_{s}^{n}
\bigr)+\psi \bigl(Y_{s}^{n} \bigr) \bigr] =\mathbb{E}
\bigl[ \mathrm{e}^{2\tilde{V}_{n}} \bigl( \varphi(\xi_{n})+\psi(\xi
_{n}) \bigr) \bigr] \leq\mathbb{E} \bigl[ \mathrm{e}^{2\tilde{V}_{n}}
\bigl( \varphi(\eta)+\psi( \eta) \bigr) \bigr] ,
\]
and using (\ref{boundedness for approxim 7'}), inequality
(\ref{boundedness for approxim 7}) becomes
\begin{eqnarray*}
&&\mathbb{E} \bigl[ \mathrm{e}^{2\tilde{V}_{t}} \bigl(
\varphi_{1/n} \bigl(Y_{t}^{n}%
\bigr)+
\psi_{1/n} \bigl(Y_{t}^{n} \bigr) \bigr) \bigr]
\\
&&\qquad{}+\frac{1}{4}\mathbb{E}%
{\int_{t}^{n}}
\mathbh{1}_{ [  0,n\wedge\tau ]  } ( r ) \mathrm{e}^{2\tilde{V}_{r}} \bigl[\bigl\llvert
\nabla \varphi_{1/n} \bigl(Y_{r}^{n} \bigr)\bigr
\rrvert ^{2}\,\mathrm{d}r+\bigl\llvert \nabla\psi_{1/n}
\bigl(Y_{r}^{n} \bigr)\bigr\rrvert ^{2}\,
\mathrm{d}A_{r} \bigr]
\\
&&\quad\leq\mathbb{E} \bigl[ \mathrm{e}^{2\tilde{V}_{n}} \bigl( \varphi(\eta)+\psi
(\eta) \bigr) \bigr] +4\mathbb{E} {\int_{t}^{n}}
\mathbh{1}%
_{ [  0,n\wedge\tau ]  } ( r ) \mathrm{e}^{2\tilde{V}_{r}} \bigl(
\bigl\llvert F \bigl(r,Y_{r}^{n},Z_{r}^{n}
\bigr)\bigr\rrvert ^{2}\,\mathrm{d}r+\bigl\llvert G
\bigl(r,Y_{r}^{n} \bigr)\bigr\rrvert ^{2}\,
\mathrm{d}A_{r} \bigr)
\\
&&\qquad{}+2\mathbb{E} {\int_{t}^{n}}
\mathbh{1}_{ [  0,n\wedge
\tau ]  } ( r ) \mathrm{e}^{2\tilde{V}_{r}}\bigl\llvert
Y_{r}^{n}\bigr\rrvert ^{2}%
\bigl[
\bigl\llvert \mu_{r}^{-} \bigr\rrvert ^{2}\,
\mathrm{d}r+ \bigl\llvert \nu_{r}%
^{-} \bigr
\rrvert ^{2}\,\mathrm{d}A_{r} \bigr].
\end{eqnarray*}
The right hand side in the above inequality is bounded since
\begin{eqnarray*}
\mathrm{e}^{2\tilde{V}_{r}}\bigl\llvert G
\bigl(r,Y_{r}^{n} \bigr)\bigr\rrvert ^{2}&\leq&
\sup_{\llvert  y\rrvert
\leq\sqrt{R_{0}^{\prime}}}\mathrm{e}^{2\tilde{V}_{r}}\bigl\llvert G \bigl(r,
\mathrm{e}^{-\tilde{V}_{r}}%
y \bigr)\bigr\rrvert ^{2},
\\
\mathrm{e}^{2\tilde{V}_{r}}\bigl\llvert F \bigl(r,Y_{r}^{n},Z_{r}^{n}
\bigr)\bigr\rrvert ^{2}&\leq&3\sup_{\llvert
y\rrvert  \leq\sqrt{R_{0}^{\prime}}}
\mathrm{e}^{2\tilde{V}_{r}}\bigl\llvert F \bigl(r,\mathrm{e}^{-\tilde
{V}_{r}}y,0
\bigr)\bigr\rrvert ^{2}+3\ell^{2}\mathrm{e}^{2\tilde{V}_{r}}
\bigl\llvert Z_{r}^{n}- \zeta_{r}\bigr\rrvert
^{2}%
+3\ell^{2}\mathrm{e}^{2\tilde{V}_{r}}\llvert
\zeta_{r}\rrvert ^{2}.
\end{eqnarray*}
Therefore
%
\begin{equation}
\label{boundedness for approxim 9} \mathbb{E} \bigl[ \mathrm{e}^{2\tilde{V}_{t}} \bigl(
\varphi_{1/n} \bigl(Y_{t}^{n}%
\bigr)+
\psi_{1/n} \bigl(Y_{t}^{n} \bigr) \bigr) \bigr]
\leq C, \qquad\mbox{for all }t \geq0 %
\end{equation}
and
%
\begin{equation}
\label{boundedness for approxim 10} \mathbb{E}\int_{0}^{{\infty}}
\mathbh{1}_{ [
0,n\wedge\tau ]  } ( r ) \bigl[\mathrm{e}^{2\tilde{V}_{r}}\bigl\llvert
\nabla \varphi_{1/n} \bigl(Y_{r}^{n} \bigr)\bigr
\rrvert ^{2}\,\mathrm{d}r+\mathrm{e}^{2\tilde{V}_{r}}\bigl\llvert \nabla
\psi_{1/n} \bigl(Y_{r}%
^{n} \bigr)
\bigr\rrvert ^{2}\,\mathrm{d}A_{r} \bigr]\leq C. %
\end{equation}
From (\ref{boundedness for approxim 9}) and (\ref{ineq Yosida})(a) we see
that, for all $t\geq0$,
%
\begin{equation}
\label{boundedness for approxim 11} \mathbb{E} \bigl[ \mathrm{e}^{2\tilde{V}_{t}} \bigl( \bigl\llvert
1/n\nabla\varphi _{1/n} \bigl(Y_{r}^{n} \bigr)
\bigr\rrvert ^{2}+ \bigl\llvert 1/n\nabla\psi_{1/n}
\bigl(Y_{r}%
^{n} \bigr) \bigr\rrvert
^{2} \bigr) \bigr] \leq2C/n %
\end{equation}
and
%
\begin{equation}
\label{boundedness for approxim 11'} \mathbb{E} \bigl[ \mathrm{e}^{2\tilde{V}_{t}} \bigl( \varphi \bigl(
Y_{t}^{n}%
-1/n\nabla\varphi_{1/n}
\bigl(Y_{r}^{n} \bigr) \bigr) +\psi \bigl(
Y_{t}^{n}%
-1/n\nabla \psi_{1/n}
\bigl(Y_{r}^{n} \bigr) \bigr) \bigr) \bigr] \leq C.
\end{equation}

E. \textit{Cauchy sequences and convergence.}
From (\ref{boundedness for approxim 4}), we have
%
\begin{eqnarray}
\label{boundedness for approxim 12} %
&&\mathbb{E}\sup_{s\geq n}
\mathrm{e}^{p\tilde{V}_{s}}\bigl\llvert Y_{s}^{n+l}-
\xi_{s}\bigr\rrvert ^{p}%
+\mathbb{E} \biggl( {
\int_{n}^{\infty}}\mathrm{e}^{2\tilde{V}_{s}}%
\bigl\llvert Z_{s}^{n+l}-\zeta_{s}\bigr\rrvert
^{2}\,\mathrm{d}s \biggr) ^{p/2}\nonumber
\\
&&\quad\leq C \mathbb{E} \biggl[ \biggl({\int_{n}%
^{\infty}}\mathrm{e}^{2\tilde{V}_{s}}\Psi(s,\xi_{s})\,
\mathrm{d}Q_{s} \biggr)^{p/2}\\
&&\qquad{}
+ \biggl({\int
_{n}^{\infty}}\mathrm{e}^{\tilde{V}_{s}}\bigl\llvert
\Phi(s,\xi_{s}%
,\zeta_{s})\bigr\rrvert \,
\mathrm{d}Q_{s} \biggr)^{p} \biggr]\rightarrow 0,\qquad n
\rightarrow {\infty}.
\nonumber
\end{eqnarray}
By uniqueness it follows that, for all $t\in [  0,n ]  $,
\[
Y_{t}^{n+l}-Y_{t}^{n}=Y_{n}^{n+l}-
\xi_{n}+\int_{t}^{n}\,
\mathrm{d}K_{s}^{n,l}- \int_{t}^{n}
\bigl(Z_{s}^{n+l}-Z_{s}^{n} \bigr)\,
\mathrm{d}W_{s},\qquad \mathrm{a.s.},
\]
where $\mathrm{d}K_{s}^{n,l}= (  \Phi(s,Y_{s}^{n+l},Z_{s}^{n+l})-\Phi(s,Y_{s}%
^{n},Z_{s}^{n})-[\nabla_{y}\Psi^{n+l}(Y_{s}^{n+l})-\nabla_{y}\Psi^{n}%
(Y_{s}^{n})] )\,\mathrm{d}Q_{s}$.

By (\ref{ineq Yosida})(d) with $\varepsilon=1/ (  n+l )  $ and
$\delta=1/n$
\begin{eqnarray*}
&&{-} \bigl\langle Y_{s}^{n+l}-Y_{s}^{n},
\bigl(\nabla_{y}\Psi^{n+l} \bigl(s,Y_{s}%
^{n+l} \bigr)-\nabla_{y}\Psi^{n}
\bigl(s,Y_{s}^{n} \bigr) \bigr)\,\mathrm{d}Q_{s}
\bigr\rangle
\\
&&\quad\leq(\varepsilon+\delta)\mathbh{1}_{ [  0,\tau ]  } ( s ) \bigl( \bigl
\langle \nabla\varphi_{\varepsilon} \bigl(Y_{s}^{n+l} \bigr),
\nabla \varphi _{\delta} \bigl(Y_{s}^{n} \bigr) \bigr
\rangle \,\mathrm{d}s+ \bigl\langle\nabla\psi_{\varepsilon}%
\bigl(Y_{s}^{n+l}
\bigr),\nabla\psi_{\delta} \bigl(Y_{s}^{n} \bigr)
\bigr\rangle\,\mathrm{d}A_{s} \bigr),
\end{eqnarray*}
and using (\ref{a priori estim 2}) we have on $ [  0,n ]  $
\begin{eqnarray*}
&& \bigl\langle Y_{s}^{n+l}-Y_{s}^{n},\mathrm{d}K_{s}^{n,l}
\bigr\rangle
\\
&&\quad\leq\frac
{\varepsilon+\delta}{2}\mathbh{1}_{ [  0,\tau ]  } ( s ) \bigl[ \bigl(\bigl
\llvert \nabla\varphi_{\varepsilon} \bigl(Y_{s}^{n} \bigr)
\bigr\rrvert ^{2}+\bigl\llvert \nabla\varphi_{\delta
}
\bigl(Y_{s}^{n+l} \bigr)\bigr\rrvert ^{2} \bigr)\,
\mathrm{d}s
\\
&&\qquad{}+ \bigl(\bigl\llvert \nabla\psi_{\varepsilon} \bigl(Y_{s}^{n}
\bigr)\bigr\rrvert ^{2}+\bigl\llvert \nabla\psi_{\delta
}
\bigl(Y_{s}^{n+l} \bigr)\bigr\rrvert ^{2} \bigr)\,
\mathrm{d}A_{s} \bigr]+\bigl\llvert Y_{s}^{n+l}-Y_{s}^{n}
\bigr\rrvert ^{2}\,\mathrm{d} \tilde{V}%
_{s}^{n}+
\frac{1}{2a}\bigl\llvert Z_{s}^{n+l}-Z_{s}^{n}
\bigr\rrvert ^{2}\,\mathrm{d}s.
\end{eqnarray*}
Proposition \ref{prop 1 Appendix} yields once again
\begin{eqnarray*}
&&\mathbb{E}\sup_{s\in [  0,n ]  }
\mathrm{e}^{2\tilde{V}_{s}} \bigl\llvert Y_{s}^{n+l}-Y_{s}^{n}
\bigr\rrvert ^{2}+\mathbb{E} {\int_{0}^{n}%
}\mathrm{e}^{2\tilde{V}_{s}}\bigl\llvert Z_{s}^{n+l}-Z_{s}^{n}
\bigr\rrvert ^{2}\,\mathrm{d}s
\\
&&\quad\leq C \mathbb{E}\mathrm{e}^{2\tilde{V}_{n}}\bigl\llvert
Y_{n}^{n+l}- \xi_{n}\bigr\rrvert ^{2}+(
\varepsilon +\delta) C \mathbb{E} {\int_{0}^{n\wedge\tau}}
\mathrm{e}^{2\tilde{V}_{s}%
} \bigl(\bigl\llvert \nabla \varphi_{\varepsilon}
\bigl(Y_{s}^{n} \bigr)\bigr\rrvert ^{2}+\bigl
\llvert \nabla\varphi_{\delta
} \bigl(Y_{s}^{n+l} \bigr)
\bigr\rrvert ^{2} \bigr)\,\mathrm{d}s
\\
&&\qquad{}+(\varepsilon+\delta) C \mathbb{E} {\int_{0}^{n\wedge\tau}%
}\mathrm{e}^{2\tilde{V}_{s}} \bigl(\bigl\llvert \nabla\psi_{\varepsilon}
\bigl(Y_{s}^{n} \bigr)\bigr\rrvert ^{2}+\bigl
\llvert \nabla \psi_{\delta} \bigl(Y_{s}^{n+l} \bigr)
\bigr\rrvert ^{2} \bigr)\,\mathrm{d}A_{s}.
\end{eqnarray*}
The estimates (\ref{boundedness for approxim 10}) and
(\ref{boundedness for approxim 12}) give us, for $n\rightarrow\infty$,
\begin{eqnarray*}
&&\mathbb{E}\sup_{s\in [  0,n ]  }\mathrm{e}^{2\tilde{V}_{s}} \bigl\llvert
Y_{s}^{n+l}-Y_{s}^{n} \bigr\rrvert
^{2}+\mathbb{E} {\int_{0}^{n}%
}\mathrm{e}^{2\tilde{V}_{s}}\bigl\llvert Z_{s}^{n+l}-Z_{s}^{n}
\bigr\rrvert ^{2}\,\mathrm{d}s \\
&&\quad\leq\mathbb{E}\sup_{s\geq
n}
\mathrm{e}^{2\tilde{V}_{s}}\bigl\llvert Y_{s}^{n+l}-
\xi_{s}\bigr\rrvert ^{2}+\frac{C}{n}%
\rightarrow0.
\end{eqnarray*}
Hence, for $n\rightarrow\infty$,
\[
\mathbb{E}\sup_{s\geq0}\mathrm{e}^{2\tilde{V}_{s}}\bigl\llvert
Y_{s}^{n+l}-Y_{s}^{n}\bigr\rrvert
^{2}%
\leq\mathbb{E}\sup_{s\in [  0,n ]  }
\mathrm{e}^{2\tilde{V}_{s}} \bigl\llvert Y_{s}^{n+l}-Y_{s}^{n}
\bigr\rrvert ^{2}+\mathbb{E}\sup_{s\geq n}
\mathrm{e}^{2\tilde
{V}_{s}}\bigl\llvert Y_{s}^{n+l}-
\xi_{s}\bigr\rrvert ^{2}\rightarrow 0
\]
and
\[
\mathbb{E} {\int_{0}^{{\infty}}}\mathrm{e}^{2\tilde{V}_{s}}
\bigl\llvert Z_{s}^{n+l}-Z_{s}%
^{n}\bigr\rrvert ^{2}\,\mathrm{d}s\leq\mathbb{E} {\int
_{0}^{n}}\mathrm{e}^{2\tilde{V}_{s}}\bigl\llvert
Z_{s}^{n+l}%
-Z_{s}^{n}\bigr
\rrvert ^{2}\,\mathrm{d}s+\mathbb{E} {\int_{n}^{{\infty}}}
\mathrm{e}^{2\tilde{V}_{s}%
}\bigl\llvert Z_{s}^{n+l}-
\zeta_{s}\bigr\rrvert ^{2}\,\mathrm{d}s\rightarrow 0.
\]

F. \textit{Passage to the limit.}
Consequently there exists $(Y,Z)\in\mathcal{S}^{2}\times\Lambda^{2}$ such that
\[
\mathbb{E}\sup_{s\geq0}\mathrm{e}^{2\tilde{V}_{s}}\bigl\llvert
Y_{s}^{n}-Y_{s}%
\bigr\rrvert
^{2}+\mathbb{E}\int_{0}^{\infty}
\mathrm{e}^{2\tilde{V}_{s}}\bigl\llvert Z_{s}^{n}-Z_{s}%
\bigr\rrvert ^{2}\,\mathrm{d}s\rightarrow0, \qquad\mbox{as }n
\rightarrow\infty.
\]
We have $(Y_{t},Z_{t})= (  \eta,0 )  $ for $t>\tau$, since $Y_{t}%
^{n}=\xi_{t}=\eta$ and $Z_{t}^{n}=\zeta_{t}=0$ for $t>\tau$.

Applying Fatou's lemma to (\ref{boundedness for approxim}) and
(\ref{boundedness for approxim 4}), we obtain (\ref{ineq of sol 3})(a), (b) and
taking the limit along a subsequence in (\ref{boundedness for approxim 2}), we
deduce that $\mathrm{e}^{2\tilde{V}_{t}}\llvert Y_{t}\rrvert ^{2}\leq R_{0}^{\prime}, \mathbb{P}%
$-a.s., for all $t\geq0$.

From (\ref{boundedness for approxim 10}), there exist two p.m.s.p. $U^{1}$ and
$U^{2}$, such that along a subsequence still indexed by $n$, $\mathbh{1}%
_{ [  0,\tau\wedge n ]  }\mathrm{e}^{2\tilde{V}}\nabla\varphi_{1/n}%
(Y^{n})\rightharpoonup\mathbh{1}_{ [  0,\tau ]  }U^{1}$, weakly in
$L^{2} (  \Omega\times\mathbb{R}_{+},\mathrm{d}\mathbb{P}\otimes \mathrm{d}t;H )  $ and
$\mathbh{1}_{ [  0,\tau\wedge n ]  }\mathrm{e}^{2\tilde{V}}\nabla\psi
_{1/n}(Y^{n})\rightharpoonup\mathbh{1}_{ [  0,\tau ]  }U^{2}$, weakly
in $L^{2} (  \Omega\times\mathbb{R}_{+},\mathrm{d}\mathbb{P}\otimes \mathrm{d}A_{t}%
;H )  $.

Applying Fatou's lemma to (\ref{boundedness for approxim 10}), we obtain
(\ref{ineq of sol 4}) and from (\ref{boundedness for approxim 11}) we deduce
that for all $t\geq0$ fixed, there exists a subsequence indexed also by $n$,
such that
\[
\frac{1}{n}\nabla\varphi_{1/n} \bigl(Y_{t}^{n}
\bigr)\xrightarrow{} {\mbox{{\fontsize{7.6}{9.6}\selectfont{a.s.}}}}0 \quad\mbox{and}\quad\frac{1}{n}\nabla
\psi_{1/n} \bigl(Y_{t}^{n}%
\bigr)
\xrightarrow{} {\mbox{{\fontsize{7.6}{9.6}\selectfont{a.s.}}}}0.
\]
We now apply Fatou's lemma to (\ref{boundedness for approxim 11'}) and we
conclude (\ref{ineq of sol 3})(d).

From (\ref{approxim eq 1}), we have for all $0\leq t\leq T\leq n$, $\mathbb{P}%
$-a.s.
\[
Y_{t}^{n}+\int_{t}^{T}
\nabla_{y}\Psi^{n} \bigl(s,Y_{s}^{n}
\bigr)\,\mathrm{d}Q_{s}=Y_{T}^{n}+\int
_{t}^{T} \Phi \bigl(s,Y_{s}^{n},Z_{s}^{n}
\bigr)\,\mathrm{d}Q_{s}-\int_{t}^{T}Z_{s}^{n}
\,\mathrm{d}W_{s},
\]
and passing to the limit we conclude that
\[
Y_{t}+\int_{t}^{T}U_{s}\,
\mathrm{d}Q_{s}=Y_{T}+ \int_{t}^{T}
\Phi(s,Y_{s},Z_{s})\,\mathrm{d}Q_{s}%
-
\int_{t}^{T}Z_{s}\,\mathrm{d}W_{s},
\qquad \mbox{a.s.}%
\]
with $U_{s}=\mathbh{1}_{ [  0,\tau ]  } (  s )   [
\alpha_{s}U_{s}^{1}+ (  1-\alpha_{s} )  U_{s}^{2} ]  $, for
$s\geq0$.

Since (\ref{ineq Yosida})(b), we see that, for all $E\in\mathcal{F}$, $0\leq
s\leq t$ and $X\in\mathcal{S}^{2} [  0,T ]  $,
\begin{eqnarray*}
&&\mathbb{E} {\int_{s}^{t}}\mathbh{1}_{E}
\bigl( \bigl\langle \mathrm{e}^{2\tilde{V}_{r}%
}\nabla\varphi_{1/n}
\bigl(Y_{r}^{n} \bigr),X_{r}-Y_{r}^{n}
\bigr\rangle+\mathrm{e}^{2\tilde{V}_{r}%
}\varphi \bigl(Y_{r}^{n}-1/n
\nabla\varphi_{1/n} \bigl(Y_{s}^{n} \bigr) \bigr)
\bigr)\,\mathrm{d}r\\
&&\quad\leq\mathbb{E}%
\int_{s}^{t}
\mathbh{1}_{E}\mathrm{e}^{2\tilde{V}_{r}}\varphi(X_{r})\,
\mathrm{d}r.
\end{eqnarray*}
Passing to $\liminf$ for $n\rightarrow{\infty}$ in the above inequality
we obtain $U_{s}^{1}\in\partial\varphi(Y_{s})$, $\mathrm{d}\mathbb{P}\otimes \mathrm{d}s$-a.e. and, with similar arguments, $U_{s}^{2}\in\partial\psi(Y_{s}%
),\mathrm{d}\mathbb{P}\otimes \mathrm{d}A_{s}$-a.e.

Summarizing the above conclusions we see that $(Y,Z,U)$ is solution of the
BSVI (\ref{GBSVI 4}) under assumptions {(A$_{1}$)--(A$_{9}$)}.
\end{pf*}

\section{Variational weak formulation}\label{weak form}

In this section, we generalize the notion of solution for (\ref{GBSVI 1}), or
(\ref{GBSVI 4}), in order to give up to the assumption {(A$_{9}$)}.
The existence and the uniqueness of a weak solution $ (  Y,Z )  $
will be given. We mention that without {(A$_{9}$)} we cannot prove the
existence of a process $K$ such that $\mathrm{d}K_{t}=U_{t}\,\mathrm{d}Q_{t}\in\partial_{y}%
\Psi (  t,Y_{t} )\,\mathrm{d}Q_{t}$ (see Remarks \ref{remark_(A9)} and
\ref{remark_(A9)'}); more precisely we cannot obtain the boundedness in
$L^{2}$ of the gradients, see (\ref{boundedness for approxim 10}), and
respectively, the existence of a process $U$ such that $K_{t}=\int_{0}^{t}%
U_{s}\,\mathrm{d}Q_{s}$. Therefore, we shall give the definition of a weak solution of the
BSVI (\ref{GBSVI 4}).

Let us define the space $\mathcal{M}$ of the semimartingales $M\in
\mathcal{S}^{1}$ of the form
\[
M_{t}=\gamma-{\int_{0}^{t}}%
N_{r}\,\mathrm{d}Q_{r}+{\int_{0}^{t}}R_{r}
\,\mathrm{d}W_{r},
\]
where $N$ and $R$ are two
p.m.s.p. such that
\[
{\int_{0}^{T}}\llvert N_{r}\rrvert\,\mathrm{d}Q_{r}+{\int_{0}^{T}}\llvert
R_{r}\rrvert ^{2}\,\mathrm{d}r<{\infty}\qquad \mbox{a.s., }
\forall T>0 \mbox{ and } \gamma\in L^{0} (\Omega;\mathcal{F}_{0},
\mathbb{P};H ) .
\]

For a intuitive introduction, let $M\in\mathcal{M}$ and $ (  Y,Z )  $
be strong a solution of (\ref{GBSVI 4}), in the sense of Definition
\ref{definition sol GBSVI}. By It\^{o}'s formula, we deduce inequality
%
\begin{eqnarray}
\label{definition sol 4'} 
&&{{\frac{1}{2}}}\llvert M_{t}-Y_{t}
\rrvert ^{2}+{{\frac{1}{2}%
}\int_{t}^{T}}
\llvert R_{r}-Z_{r} \rrvert ^{2}\,\mathrm{d}r+{
\int_{t}^{T} \Psi} ( r,Y_{r} )\,
\mathrm{d}Q_{r}
\nonumber
\\
&&\quad\leq {{\frac{1}{2}}}\llvert M_{T}-Y_{T}\rrvert
^{2}+{\int_{t}^{T} \Psi} (
r,M_{r} )\,\mathrm{d}Q_{r}
\\
&&\qquad{}+{\int_{t}^{T}} \bigl\langle
M_{r}-Y_{r},N_{r}- \Phi (
r,Y_{r},Z_{r} ) \bigr\rangle\,\mathrm{d}Q_{r}-{ \int
_{t}^{T}} \bigl\langle M_{r}-Y_{r},
( R_{r}-Z_{r} )\,\mathrm{d}W_{r} \bigr
\rangle,
\nonumber
\end{eqnarray}
since $\mathrm{d}K_{t}=U_{t}\,\mathrm{d}Q_{t}\in\partial_{y}\Psi (  t,Y_{t} )\,\mathrm{d}Q_{t}$
(see inequality (\ref{in the subdiff})).

Following the approach for the forward stochastic variational inequalities
from article R\u{a}\c{s}canu \cite{ra/81}, we propose the next weak formulation of Definition
\ref{definition sol GBSVI}:

\begin{definition}\label{definition sol GBSVI 2}
We call $ (  Y_{t},Z_{t} )  _{t\geq0}$
a variational weak solution of (\ref{GBSVI 4}) if $ (  Y,Z )
\in\mathcal{S}^{2}\times\Lambda^{2}$, $ (  Y_{t},Z_{t} )  = (
\xi_{t},\zeta_{t} )  = (  \eta,0 )  $ for $t>\tau$ and
%
\begin{equation}
\label{definition sol 4} \everymath{\displaystyle} %
\begin{array} [c]{r@{
\quad}l}%
\mbox{\emph{(i)}} & {\int_{0}^{T}}
\bigl( \bigl\llvert \Phi ( s,Y_{s},Z_{s} ) \bigr\rrvert +
\bigl\llvert \Psi ( s,Y_{s} ) \bigr\rrvert \bigr)\,
\mathrm{d}Q_{s}< \infty, \qquad\mathbb{P}\mbox{-a.s., for all
}%
T\geq0,
\\\noalign{\vspace*{6pt}}
\mbox{\emph{(ii)}} & {{\frac{1}{2}}}\llvert M_{t}%
-Y_{t}\rrvert ^{2}+{{\frac{1}{2}}\int
_{t}^{s}}\llvert R_{r}-Z_{r}
\rrvert ^{2}\,\mathrm{d}r+{\int_{t}^{s}
\Psi} ( r,Y_{r} )\,\mathrm{d}Q_{r}
\\\noalign{\vspace*{6pt}}
&\quad\leq{{\frac{1}{2}}}\llvert M_{s}-Y_{s}\rrvert
^{2}
\\\noalign{\vspace*{6pt}}
& \qquad{} +{\int_{t}^{s}\Psi} (
r,M_{r} )\,\mathrm{d}Q_{r}+{\int_{t}^{s}}
\bigl\langle M_{r}-Y_{r}%
,N_{r}-
\Phi ( r,Y_{r},Z_{r} ) \bigr\rangle\,\mathrm{d}Q_{r}
\\\noalign{\vspace*{6pt}}
& \qquad{}-{\int_{t}^{s}} \bigl\langle
M_{r}-Y_{r}, ( R_{r}-Z_{r} )\,
\mathrm{d}W_{r} \bigr\rangle ,
\\\noalign{\vspace*{6pt}}
& \qquad\forall 0\leq t\leq s, \forall ( N,R ) \in L^{2} \bigl( \Omega
\times[0,{\infty});H \bigr) \times\Lambda ^{2}, \forall M\in
\mathcal{M},
\\\noalign{\vspace*{6pt}}
\mbox{\emph{(iii)}} & \mathbb{E}\mathrm{e}^{2V_{T}}\llvert
Y_{T}-\xi_{T} \rrvert ^{2}+\mathbb{E} {\int
_{T}^{\infty}}\mathrm{e}^{2V_{s}} \llvert
Z_{s}-\zeta_{s}\rrvert ^{2}\,\mathrm{d}s
\rightarrow 0, \qquad\mbox{as }T \rightarrow\infty. \end{array}
\end{equation}
\end{definition}

\begin{remark}
It is obviously that a strong solution for (\ref{GBSVI 4}) is also a weak
solution (see the intuitive introduction for inequality
(\ref{definition sol 4'})).
\end{remark}

\begin{remark}
We highlight the connection between this definition and the Fitzpatrick
function approach for the multivalued BSDEs driven by a maximal monotone
operator or, in particular, by a subdifferential operator (see R\u{a}\c{s}canu and Rotenstein \cite{ra-ro/11}).
\end{remark}

\begin{theorem}\label{main result 2}
Let assumptions \emph{(A$_{1}$)--(A$_{8}$)} be satisfied.
Then the backward stochastic variational inequality (\ref{GBSVI 4}) has a
unique solution $ (  Y,Z )  $ in the sense of Definition
\ref{definition sol GBSVI 2} such that $\mathbb{E}\sup_{s\in [
0,T ]  }\mathrm{e}^{p\tilde{V}_{s}}\llvert  Y_{s}\rrvert  ^{p}<\infty$, for
all $T\geq0$. Moreover, inequalities (\ref{ineq of sol 3}) hold.
\end{theorem}

\begin{pf}
First, we shall approximate the data $\eta$ and $\Phi$ by $\eta^{n}$, respectively $\Phi^{n}$ which satisfy (\ref{Assumption A_10}).

Let
\begin{eqnarray*}
\eta^{n} ( \omega ) &=&\eta ( \omega )
\mathbh{1}_{ [
0,n ]  } \Bigl( \bigl\llvert \eta ( \omega ) \bigr\rrvert +\Bigl
\llvert \sup_{s\in [  0,\tau ]  }\tilde{V}_{s}\Bigr\rrvert \Bigr),
\\
\Phi^{n} ( t,y,z ) &=&\Phi ( t,y,z ) -\Phi ( t,0,0 ) +\Phi ( t,0,0 )
\mathbh{1}_{ [  0,n ]
} \bigl(t+ \bigl\llvert \Phi ( t,0,0 ) \bigr\rrvert +
\llvert \tilde{V}%
_{t}\rrvert \bigr).
\end{eqnarray*}
Obviously, as $n\rightarrow{\infty}$,
\[
\mathbb{E} \bigl(\mathrm{e}^{p\sup_{s\in [  0,\tau ]  }\tilde{V}_{s}} \bigl\llvert \eta^{n}-\eta
\bigr\rrvert ^{p} \bigr)+\mathbb{E} \biggl( {\int_{0}^{\tau}}
\mathrm{e}^{\tilde{V}_{s}} \bigl\llvert \Phi^{n} ( s,0,0 ) -\Phi ( s,0,0
) \bigr\rrvert\,\mathrm{d}Q_{s} \biggr) ^{p}%
\rightarrow
0.
\]
Theorem \ref{main result 1} shows that there exists a unique solution $ (
Y^{n},Z^{n},U^{n} )  $ of the BSVI (\ref{GBSVI 4}) corresponding to
$\eta^{n}$ and $\Phi^{n}$:
\[
\cases{\displaystyle Y_{t}^{n}+{\int_{t}^{\infty}}U_{s}^{n}
\,\mathrm{d}Q_{s}= \eta^{n}%
+{\int
_{t}^{\infty}} \Phi^{n} \bigl(
s,Y_{s}^{n},Z_{s}^{n} \bigr)\,
\mathrm{d}Q_{s}-{\int_{t}^{\infty}}Z_{s}^{n}
\,\mathrm{d}W_{s},\qquad \mbox{a.s.,}%
\cr
\displaystyle{U_{t}^{n}\in\partial_{y} \Psi
\bigl( t,Y_{t}^{n} \bigr) , \qquad\forall t
\geq0.}%
}
\]
This solution satisfies inequalities (\ref{ineq of sol 3}) and (\ref{ineq of sol 4}%
) with $Y,Z,U,\Phi,\eta,\xi,\zeta$ replaced respectively, with $Y^{n}$, $ Z^{n}$, $U^{n}$, $\Phi^{n}$, $\eta^{n}$, $\xi^{n}$,
$ \zeta^{n}$.

Since $\llvert  \eta^{n}\rrvert  \leq\llvert  \eta\rrvert  $ and
$\llvert  \Phi^{n} (  t,0,0 )  \rrvert  \leq\llvert
\Phi (  t,0,0 )  \rrvert  $,
\[
\mathrm{e}^{2\tilde{V}_{t}}\bigl\llvert Y_{t}^{n}\bigr
\rrvert ^{2}\leq C \mathbb{E}^{\mathcal{F}_{t}%
} \biggl[ \bigl(
\mathrm{e}^{2\sup_{s\in [  t,\tau ]  }\tilde{V}_{s}}\llvert \eta \rrvert ^{2} \bigr)+ \biggl({\int
_{t}^{{\infty}}}\mathrm{e}^{\tilde{V}_{s}}%
\bigl\llvert \Phi ( s,0,0 ) \bigr\rrvert \,\mathrm{d}Q_{s}
\biggr)^{2} \biggr],\qquad \forall t\geq0,\ \mathbb{P}%
\mbox{-a.s.}%
\]
Using (\ref{a priori estim 2}), we see that
\begin{eqnarray*}
&& \bigl\langle Y_{s}^{n}-Y_{s}^{m},
\Phi^{n} \bigl( s,Y_{s}^{n},Z_{s}^{n}
\bigr) -\Phi^{m} \bigl(s,Y_{s}^{m},Z_{s}^{m}
\bigr)- \bigl( U_{s}^{n}-U_{s}^{m}
\bigr) \bigr\rangle\,\mathrm{d}Q_{s}
\\
&&\quad\leq \bigl\langle Y_{s}^{n}-Y_{s}^{m},
\Phi \bigl( s,Y_{s}^{n},Z_{s}^{n}
\bigr) -\Phi \bigl(s,Y_{s}^{m},Z_{s}^{m}
\bigr) \bigr\rangle\,\mathrm{d}Q_{s}
\\
&&\qquad{}+ \bigl\langle Y_{s}^{n}-Y_{s}^{m},
\Phi ( s,0,0 ) \bigr\rangle ( \mathbh{1}_{ [  0,n ]  }-\mathbh{1}_{ [  0,m ]  }
) \bigl(t+ \bigl\llvert \Phi ( t,0,0 ) \bigr\rrvert +\llvert
\tilde{V}_{t}%
\rrvert \bigr)\,\mathrm{d}Q_{s}
\\
&&\quad\leq\bigl\llvert Y_{s}^{n}-Y_{s}^{m}
\bigr\rrvert \bigl\llvert \Phi ( s,0,0 ) \bigr\rrvert \bigl\llvert (
\mathbh{1}_{ [  0,n ]  }- \mathbh{1}_{ [
0,m ]  } ) \bigl(t+ \bigl\llvert
\Phi ( t,0,0 ) \bigr\rrvert +\llvert \tilde{V}_{t}\rrvert \bigr)
\bigr\rrvert \,\mathrm{d}Q_{s}
\\
&&\qquad{}+\bigl\llvert Y_{s}^{n}-Y_{s}^{m}
\bigr\rrvert ^{2}\,\mathrm{d} \tilde{V}_{s}+\frac{1}{2a}
\bigl\llvert Z_{s}^{n}%
-Z_{s}^{m}
\bigr\rrvert ^{2}\,\mathrm{d}s,
\end{eqnarray*}
since $ \langle Y_{s}^{n}-Y_{s}^{m},U_{s}^{n}-U_{s}^{m} \rangle
\geq0$, for $ U_{s}^{n}\in\partial_{y}\Psi(s,Y_{s}^{n})$ and $U_{s}^{m}%
\in\partial_{y}\Psi(s,Y_{s}^{m})$, and $\mathrm{d}V_{s}\leq \mathrm{d}\tilde{V}_{s}$ on $ [
0,\tau ]  $.

Applying Proposition \ref{prop 1 Appendix} (see the \hyperref[app]{Appendix}) for the equation
satisfied by $Y^{n}-Y^{m}$ on $ [  0,T ]  $, it follows that
\begin{eqnarray*}
&&\mathbb{E}\sup_{s\in [  0,T ]  }
\mathrm{e}^{p\tilde{V}_{s}} \bigl\llvert Y_{s}^{n}-Y_{s}^{m}
\bigr\rrvert ^{p}+\mathbb{E} \biggl( {\int_{0}^{T}}
\mathrm{e}^{2\tilde{V}_{s}}\bigl\llvert Z_{s}^{n}-Z_{s}^{m}
\bigr\rrvert ^{2}\,\mathrm{d}s \biggr) ^{p/2}%
\\
&&\quad{\leq C \mathbb{E} \biggl({\int_{0}^{T}}%
\mathrm{e}^{\tilde{V}_{s}} \bigl\llvert \Phi ( s,0,0 ) \bigr\rrvert \bigl\llvert (
\mathbh{1}_{ [  0,n ]  }-\mathbh{1}_{ [
0,m ]  } ) \bigl(t+ \bigl\llvert
\Phi ( t,0,0 ) \bigr\rrvert +\llvert \tilde{V}_{t}\rrvert \bigr)
\bigr\rrvert \,\mathrm{d}Q_{s} \biggr)^{p} }
\\
&&\qquad{} +C \mathbb{E}\mathrm{e}^{p\tilde{V}_{T}} \bigl(\bigl\llvert
Y_{T}^{n}- \xi_{T}%
^{n}\bigr
\rrvert ^{p}+\bigl\llvert \xi_{T}^{n}-
\xi_{T}^{m}\bigr\rrvert ^{p}+\bigl\llvert
\xi_{T}^{m}-Y_{T}^{m}\bigr\rrvert
^{p} \bigr).
\end{eqnarray*}
Therefore, passing to the limit for $T\rightarrow{\infty}$,
\begin{eqnarray*}
&&\mathbb{E}\sup_{s\geq0}
\mathrm{e}^{p\tilde{V}_{s}}\bigl\llvert Y_{s}^{n}-Y_{s}^{m}
\bigr\rrvert ^{p}%
+\mathbb{E} \biggl( {\int
_{0}^{{\infty}}}\mathrm{e}^{2\tilde{V}_{s}%
}\bigl\llvert
Z_{s}^{n}-Z_{s}^{m}\bigr\rrvert
^{2}\,\mathrm{d}s \biggr) ^{p/2}
\\
&&\quad {\leq C \mathbb{E} \biggl({\int_{0}%
^{{\infty}}}\mathrm{e}^{\tilde{V}_{s}} \bigl\llvert \Phi ( s,0,0 ) \bigr
\rrvert \bigl\llvert ( \mathbh{1}_{ [  0,n ]  }-\mathbh{1}%
_{ [  0,m ]  } ) \bigl(t+ \bigl\llvert \Phi ( t,0,0 ) \bigr\rrvert +
\llvert \tilde{V}_{t}\rrvert \bigr) \bigr\rrvert \,\mathrm{d}Q_{s}
\biggr)^{p} }
\\
&&\qquad{} +C \mathbb{E}\mathrm{e}^{p\sup_{s\geq0}\tilde{V}_{s}}\bigl\llvert
\eta^{n}%
-\eta^{m}\bigr\rrvert ^{p}
\xrightarrow{n,m\rightarrow \infty} {}0.%
\end{eqnarray*}
Consequently there exists $(Y,Z)\in\mathcal{S}^{0}\times\Lambda^{0}$ a
solution of the BSVI (\ref{GBSVI 4}) such that
\[
\mathbb{E}\sup_{s\geq0}\mathrm{e}^{2\tilde{V}_{s}}\bigl\llvert
Y_{s}^{n}-Y_{s}%
\bigr\rrvert
^{2}+\mathbb{E}\int_{0}^{\infty}
\mathrm{e}^{2\tilde{V}_{s}}\bigl\llvert Z_{s}^{n}-Z_{s}%
\bigr\rrvert ^{2}\,\mathrm{d}s\rightarrow 0,\qquad \mbox{as }n
\rightarrow\infty
\]
and $(Y_{t},Z_{t})= (  \eta,0 )  $ for $t>\tau$, since $Y_{t}^{n}%
=\xi_{t}^{n}=\eta^{n}$ and $Z_{t}^{n}=\zeta_{t}^{n}=0$ for $t>\tau$.

Let $M\in\mathcal{M}$ given by $M_{t}=\gamma-{\int_{0}^{t}}N_{r}\,\mathrm{d}Q_{r}%
+{\int_{0}^{t}}R_{r}\,\mathrm{d}W_{r} $. From the It\^{o}'s formula applying to
$\llvert  M_{t}-Y_{t}^{n}\rrvert  ^{2}$, we deduce that, for all $0\leq
t\leq s\leq\tau$,
\begin{eqnarray*}
&&{{\frac{1}{2}}} \bigl\llvert
M_{t}-Y_{t}^{n} \bigr\rrvert ^{2}+{{
\frac{1}{2}%
}\int_{t}^{s}} \bigl
\llvert R_{r}-Z_{r}^{n} \bigr\rrvert
^{2}\,\mathrm{d}r+{\int_{t}^{s}\Psi}
\bigl( r,Y_{r}^{n} \bigr)\,\mathrm{d}Q_{r}%
\\
&&\quad\leq{{ \frac{1}{2}}} \bigl\llvert M_{s}-Y_{s}^{n}
\bigr\rrvert ^{2}
\\
&&\qquad{}+{\int_{t}^{s}\Psi} (
r,M_{r} )\,\mathrm{d}Q_{r}%
+{\int
_{t}^{s}} \bigl\langle M_{r}-Y_{r}^{n},N_{r}-
\Phi \bigl( r,Y_{r}^{n},Z_{r}^{n}
\bigr) \bigr\rangle\,\mathrm{d}Q_{r}
\\
&&\qquad{}-{\int_{t}^{s}} \bigl\langle
M_{r}-Y_{r}^{n}, \bigl( R_{r}-Z_{r}%
^{n} \bigr)\,\mathrm{d}W_{r} \bigr\rangle .
\end{eqnarray*}
Since on a subsequence (still denoted by $n$)
\[
\sup_{s\in [  0,T ]  } \bigl\llvert Y_{s}^{n}-Y_{s}
\bigr\rrvert ^{2}+{\int_{0}^{T}} \bigl
\llvert Z_{s}^{n}-Z_{s} \bigr\rrvert
^{2}\,\mathrm{d}s %
\rightarrow 0,\qquad
\mbox{a.s.,}%
\]
it follows easily, passing to the $\lim\inf$, that the couple $ (
Y,Z )  $ satisfies inequality (\ref{definition sol 4})(ii).

In the same manner, inequalities (\ref{ineq of sol 3}) follow now from the
similar properties satisfied by the approximate solution $ (  Y^{n}%
,Z^{n} )  $.

In order to prove the uniqueness of the solution let $ (  Y^{1}%
,Z^{1} )  $ and $ (  Y^{2},Z^{2} )  $ be two solutions of
(\ref{GBSVI 4}) corresponding to $\eta^{1}$ and $\eta^{2}$, respectively. Hence,
\begin{eqnarray*}
&&{{\frac{1}{2}}} \bigl( \bigl\llvert M_{t}-Y_{t}^{1}
\bigr\rrvert ^{2}+ \bigl\llvert M_{t}-Y_{t}^{2}
\bigr\rrvert ^{2} \bigr)+{{\frac{1}{2}}\int_{t}^{s}%
} \bigl( \bigl\llvert R_{r}-Z_{r}^{1} \bigr
\rrvert ^{2}+ \bigl\llvert R_{r}-Z_{r}%
^{2} \bigr\rrvert ^{2} \bigr)\,\mathrm{d}r
\\
&&\qquad{}+{\int_{t}^{s} \bigl({\Psi} \bigl(
r,Y_{r}^{1} \bigr) +{\Psi } \bigl( r,Y_{r}^{2}
\bigr) \bigr)}\,\mathrm{d}Q_{r}
\\
&&\quad\leq{{\frac{1}{2}}} \bigl( \bigl\llvert M_{s}-Y_{s}^{1}
\bigr\rrvert ^{2}+ \bigl\llvert M_{s}-Y_{s}^{2}
\bigr\rrvert ^{2} \bigr)
\\
&&\qquad{}+{\int_{t}^{s}} \bigl( \bigl\langle
M_{r}-Y_{r}^{1},N_{r}%
-
\Phi \bigl( r,Y_{r}^{1},Z_{r}^{1}
\bigr) \bigr\rangle+ \bigl\langle M_{r}-Y_{r}%
^{2},N_{r}-\Phi \bigl( r,Y_{r}^{2},Z_{r}^{2}
\bigr) \bigr\rangle \bigr)\,\mathrm{d}Q_{r}%
\\
&&\qquad{}+2{\int_{t}^{s}\Psi} (
r,M_{r} )\,\mathrm{d}Q_{r}%
-{\int
_{t}^{s}} \bigl\langle M_{r}-Y_{r}^{1},
\bigl( R_{r}-Z_{r}%
^{1} \bigr)\,
\mathrm{d}W_{r} \bigr\rangle
\\
&&\qquad{}-{\int_{t}^{s}} \bigl\langle
M_{r}-Y_{r}^{2}, \bigl( R_{r}%
-Z_{r}^{2} \bigr)\,\mathrm{d}W_{r} \bigr
\rangle, \qquad\forall 0\leq t\leq s, \forall M\in\mathcal{M}.
\end{eqnarray*}
Let $Y=\frac{Y^{1}+Y^{2}}{2}$, $Z=\frac{Z^{1}+Z^{2}}{2}$ and $\Phi (
r )  =\frac{\Phi (  r,Y_{r}^{1},Z_{r}^{1} )  +\Phi (
r,Y_{r}^{2},Z_{r}^{2} )  }{2}$. From the convexity of ${{\Psi}}$ we see
that $2{{\Psi} (  r,Y_{r} )  \leq{\Psi} (  r,Y_{r}^{1} )
+{\Psi} (  r,Y_{r}^{2} )  }$ and, using the identity
\[
2 \biggl\langle\frac{y^{1}+y^{2}}{2},\frac{f^{1}+f^{2}}{2} \biggr\rangle+
\frac{1}%
{2} \bigl\langle y^{1}-y^{2},f^{1}-f^{2}
\bigr\rangle = \bigl\langle y^{1}%
,f^{1} \bigr
\rangle + \bigl\langle y^{2},f^{2} \bigr\rangle ,
\]
we obtain
\begin{eqnarray*}
&& \bigl\langle M_{r}-Y_{r}^{1},N_{r}-
\Phi \bigl( r,Y_{r}^{1},Z_{r}^{1}
\bigr) \bigr\rangle+ \bigl\langle M_{r}-Y_{r}^{2},N_{r}-
\Phi \bigl( r,Y_{r}^{2},Z_{r}%
^{2} \bigr) \bigr\rangle
\\
&&\quad{}-2 \bigl\langle M_{r}-Y_{r},N_{r}-
\Phi ( r ) \bigr\rangle -\frac{1}{2} \bigl\langle Y_{r}^{1}-Y_{r}^{2},
\Phi \bigl( r,Y_{r}^{1},Z_{r}%
^{1} \bigr) -\Phi \bigl( r,Y_{r}^{2},Z_{r}^{2}
\bigr) \bigr\rangle=0
\end{eqnarray*}
and
\[
\bigl\langle M_{r}-Y_{r}^{1},R_{r}-Z_{r}^{1}
\bigr\rangle+ \bigl\langle M_{r}-Y_{r}^{2}%
,R_{r}-Z_{r}^{2} \bigr\rangle-2 \langle
M_{r}-Y_{r},R_{r}-Z_{r} \rangle -
\frac{1}{2} \bigl\langle Y_{r}^{1}-Y_{r}^{2},Z_{r}^{1}-Z_{r}^{2}
\bigr\rangle=0.
\]
Therefore, since $\frac{1}{2} (\llvert m-y^{1}\rrvert ^{2}+\llvert m-y^{2}\rrvert ^{2}%
)= \llvert m-\frac{y^{1}+y^{2}}{2} \rrvert ^{2}+\frac{1}{4}\llvert y^{1}-y^{2}\rrvert ^{2}$,
we have
%
\begin{eqnarray}
\label{uniqueness_Gronwall} %
&& \bigl\llvert Y_{t}^{1}-Y_{t}^{2}
\bigr\rrvert ^{2}+{\int_{t}^{s}%
} \bigl\llvert Z_{r}^{1}-Z_{r}^{2}
\bigr\rrvert ^{2}\,\mathrm{d}r
\nonumber
\\
&&\quad\leq8B_{t,s} ( M ) + \bigl\llvert Y_{s}^{1}-Y_{s}^{2}
\bigr\rrvert ^{2}
\nonumber
\\
&&\qquad{}+{{2}\int_{t}^{s}} \bigl\langle
Y_{r}^{1}-Y_{r}^{2},\Phi \bigl(
r,Y_{r}^{1},Z_{r}^{1} \bigr) -\Phi
\bigl( r,Y_{r}^{2},Z_{r}^{2} \bigr)
\bigr\rangle\,\mathrm{d}Q_{r}
\nonumber
\\
&&\qquad{}-2{\int_{t}^{s}} \bigl\langle
Y_{r}^{1}-Y_{r}^{2}, \bigl(
Z_{r}^{1}-Z_{r}^{2} \bigr)\,
\mathrm{d}W_{r} \bigr\rangle, \qquad\forall 0\leq t\leq s, \forall M
\in\mathcal{M}, %
\end{eqnarray}
where
\begin{eqnarray*}
B_{t,s} ( M ) &=&{{ \frac{1}{2}}}\llvert M_{s}-Y_{s}
\rrvert ^{2}+{\int_{t}^{s}{\Psi} (
r,M_{r} ) }\,\mathrm{d}Q_{r}%
-{\int
_{t}^{s}{ \Psi} ( r,Y_{r} ) }\,
\mathrm{d}Q_{r}-{{\frac
{1}{2}}}\llvert M_{t}-Y_{t}
\rrvert ^{2}
\\
&&{}-{{\frac{1}{2}}\int_{t}^{s}}\llvert
R_{r}-Z_{r}%
\rrvert ^{2}\,
\mathrm{d}r+{\int_{t}^{s}} \bigl\langle
M_{r}-Y_{r},N_{r}%
-\Phi ( r )
\bigr\rangle\,\mathrm{d}Q_{r}
\\
&&{}-{\int_{t}^{s}} \bigl\langle
M_{r}-Y_{r}, ( R_{r}-Z_{r} )\,
\mathrm{d}W_{r} \bigr\rangle.
\end{eqnarray*}
Our next goal will be to prove that
%
\begin{equation}
\label{ineq 1 Lemma} \mbox{there exists }M^{\varepsilon}\in\mathcal{M}\mbox{ such that
}%
\lim_{\varepsilon\rightarrow0}B_{t,s} \bigl(
M^{\varepsilon} \bigr) =0,\qquad \mbox{a.s., }\forall 0\leq t\leq s. %
\end{equation}
Let $M_{t}^{\varepsilon}=\mathrm{e}^{-\fraca{Q_{t}}{Q_{\varepsilon}}} [Y_{0}+\frac
{1}{Q_{\varepsilon}}{\int_{0}^{t}}\mathrm{e}^{\fraca{Q_{r}}{Q_{\varepsilon}}}Y_{r}\,\mathrm{d}Q_{r} ]$.
Clearly, $M^{\varepsilon}\in\mathcal{M}$, since $M_{t}%
^{\varepsilon}=M_{0}^{\varepsilon}+{\int_{0}^{t}}\,\mathrm{d}M_{r}^{\varepsilon} $.

The next result it is necessary in order to obtain the limit in the Stieltjes
type integrals:

\begin{lemma}\label{limit in Stieltjes integral}
Let $a\dvtx  [  0,T ]  \rightarrow
\mathbb{R}$ be a strictly increasing continuous function such that $a (
0 )  =0$ and $f\dvtx  [  0,T ]  \rightarrow H$ be a measurable
function such that $\llvert  f (  t )  \rrvert  \leq C$ a.e.
$t\in [  0,T ]  $. Define, for $\varepsilon>0$,
\[
f_{\varepsilon} ( t ) =f ( 0 ) \mathrm{e}^{\fraca{-a (
t )  }{a (  \varepsilon )  }}+\frac{1}{a (  \varepsilon
)  }\int
_{0}^{t}\mathrm{e}^{\fracb{a (  r )  -a (  t )
}{a (  \varepsilon )  }}f ( r )\,
\mathrm{d}a ( r ) .
\]
Then as $\varepsilon\rightarrow0$, $f_{\varepsilon}(t)\rightarrow
f(t)$, a.e. $t\in[0,T]$ and, if $f$ is continuous, then $\sup_{s\in[0,T]}\llvert f_{\varepsilon
}(t)-f(t)\rrvert \rightarrow 0 $.
\end{lemma}

\begin{remark}\label{limit in Stieltjes integral 2}
The same conclusions are true if we
consider $f_{\varepsilon} (  t )  $ replaced by
\[
g_{\varepsilon} ( t ) =f ( T ) \mathrm{e}^{\fracb{a (
t )  -a (  T )  }{a (  \varepsilon )  }}+\frac
{1}{a (  \varepsilon )  }\int
_{t}^{T}\mathrm{e}^{\fracb{a (  t )
-a (  r )  }{a (  \varepsilon )  }}f ( r )\,
\mathrm{d}a ( r ) ,\qquad t\in [ 0,T ] .
\]

\begin{pf*}{Proof of  Lemma \ref{limit in Stieltjes integral}}
Obviously, we have
\begin{eqnarray*}
&&{\int_{0}^{t}}
\frac{1}{a (  \varepsilon )  }%
\mathrm{e}^{\fracb{a (  r )  -a (  t )  }{a (  \varepsilon )
}}f ( r )\,\mathrm{d}a ( r )
\\
&&\quad={\int_{\fraca{-a (
t )  }{a (  \varepsilon )  }}^{0}}\mathrm{e}^{u}f
\bigl( \bigl( a^{-1} \bigl( ua ( \varepsilon ) +a ( t ) \bigr) \bigr)
\bigr)\,\mathrm{d}u
\\
&&\quad={\int_{\fraca{-a (  t )  }{a (  \varepsilon )
}}^{0}}\mathrm{e}^{u}
\bigl[ f \bigl(a^{-1} \bigl( ua ( \varepsilon ) +a ( t ) \bigr)
\bigr)-f \bigl(a^{-1} \bigl( a ( t ) \bigr) \bigr) \bigr]\,
\mathrm{d}u+f ( t ) {\int_{\fraca
{-a (  t )  }{a (  \varepsilon )  }}^{0}}
\mathrm{e}^{u}\,\mathrm{d}u.
\end{eqnarray*}
But
\begin{eqnarray*}
&&\limsup_{\varepsilon\rightarrow0} \biggl\llvert { \int
_{\fraca
{-a (  t )  }{a (  \varepsilon )  }}^{0}}\mathrm{e}^{u} \bigl[ f
\bigl(a^{-1} \bigl( ua ( \varepsilon ) +a ( t ) \bigr) \bigr)-f
\bigl(a^{-1} \bigl( a ( t ) \bigr) \bigr) \bigr]\,\mathrm{d}u \biggr
\rrvert
\\
&&\quad\leq\limsup_{\varepsilon\rightarrow0}{\int_{-{\infty}}^{0}}
\mathrm{e}^{u} \bigl\llvert f \bigl(a^{-1} \bigl( \bigl( ua
( \varepsilon ) +a ( t ) \bigr) \vee0 \bigr) \bigr)-f \bigl(a^{-1}
\bigl( a ( t ) \bigr) \bigr) \bigr\rrvert\, \mathrm{d}u
\\
&&\quad\leq2C{\int_{-{\infty}}^{-n}}
\mathrm{e}^{u}\,\mathrm{d}u+{\int_{-n}^{0}}
\mathrm{e}^{u} \bigl\llvert f \bigl(a^{-1} \bigl( \bigl( ua
( \varepsilon ) +a ( t ) \bigr) \vee0 \bigr) \bigr)-f \bigl(a^{-1}
\bigl( a ( t ) \bigr) \bigr) \bigr\rrvert \,\mathrm{d}u
\\
&&\quad\leq2C\mathrm{e}^{-n}+\limsup_{\varepsilon\rightarrow0}{\int
_{-n}^{0}} \bigl\llvert f \bigl(a^{-1}
\bigl( \bigl( ua ( \varepsilon ) +a ( t ) \bigr) \vee0 \bigr) \bigr)-f
\bigl(a^{-1} \bigl( a ( t ) \bigr) \bigr) \bigr\rrvert \,\mathrm{d}u \leq2C
\mathrm{e}^{-n},
\end{eqnarray*}
for all $n$, since $\lim_{\delta\rightarrow0}\int_{\alpha}^{\beta}\llvert  f (a^{-1}%
(  s+\delta u )   )-f (a^{-1} (  s )
)\rrvert\,  \mathrm{d}u=0 $ a.e.

Therefore, there exists
\[
\lim_{\varepsilon\rightarrow0} \biggl\llvert {\int_{\fraca{-a (
t )  }{a (  \varepsilon )  }}^{0}}
\mathrm{e}^{u} \bigl[ f \bigl(a^{-1}%
\bigl( ua (
\varepsilon ) +a ( t ) \bigr) \bigr)-f ( t ) \bigr]\,\mathrm{d}u \biggr\rrvert
=0,
\]
and the first conclusion follows.

In the case of continuity for $f$ it is sufficient to write
\begin{eqnarray*}
f_{\varepsilon} ( t ) &=&f ( 0 ) \mathrm{e}^{\fraca{-a (
t )  }{a (  \varepsilon )  }}+\frac{1}{a (  \varepsilon
)  }{
\int_{0}^{t}}\mathrm{e}^{\fracb{a (  r )  -a (
t )  }{a (  \varepsilon )  }}f ( r )\,
\mathrm{d}a ( r )
\\
&=&f ( 0 ) \mathrm{e}^{\fraca{-a (  t )  }{a (  \varepsilon
)  }}+\frac{1}{a (  \varepsilon )  }{\int
_{0}^{t_{\varepsilon}}}\mathrm{e}^{\fracb{a (  r )  -a (  t )
}{a (  \varepsilon )  }}f ( r )\,
\mathrm{d}a ( r ) \\
&&{}+\frac{1}{a (  \varepsilon )  }{\int_{t_{\varepsilon}%
}^{t}}
\mathrm{e}^{\fracb{a (  r )  -a (  t )  }{a (
\varepsilon )  }}f ( r )\,\mathrm{d}a ( r ) ,
\end{eqnarray*}
where $t_{\varepsilon}:=a^{-1} (a (  t )  -\sqrt{a (
\varepsilon )  } )
\rightarrow
t$, as $\varepsilon\rightarrow0$, and $t_{\varepsilon}<t$.
\end{pf*}
\end{remark}

Applying the above lemma, we can conclude that
%
\begin{equation}
\label{ineq 3 Lemma} M_{t}^{\varepsilon} \rightarrow Y_{t} ,
\qquad\forall t\in [ 0,T ] . %
\end{equation}
Next, we shall prove that, for all $t\leq s$,
\[
{\int_{t}^{s}{\Psi} \bigl( r,M_{r}^{\varepsilon}
\bigr) }\,\mathrm{d}Q_{r} \rightarrow {\int_{t}^{s}{
\Psi} ( r,Y_{r} ) }\,\mathrm{d}Q_{r} .
\]
Using definition of $M^{\varepsilon}$ and the convexity of the functions
$\varphi$ and $\psi$ we deduce that
%
%
\begin{eqnarray}
\label{ineq Stieltjes phi} %
{\int_{t}^{s}} \varphi
\bigl( M_{r}^{\varepsilon} \bigr) \alpha_{r}\,
\mathrm{d}Q_{r} &\leq&{\int_{t}^{s}}
\mathrm{e}^{\fraca{-Q_{r}}{Q_{\varepsilon
}}}\varphi ( Y_{0} )\,\mathrm{d}r+{\int
_{t}^{s}} \biggl( { \int_{0}^{r}}
\frac{1}{Q_{\varepsilon}}\mathrm{e}^{\fracb{Q_{u}-Q_{r}%
}{Q_{\varepsilon}}} \varphi ( Y_{u} )\,
\mathrm{d}Q_{u} \biggr)\,\mathrm{d}r
\nonumber
\\
&=&\varphi ( Y_{0} ) {\int_{t}^{s}}
\mathrm{e}^{\fraca{-Q_{r}%
}{Q_{\varepsilon}}}\,\mathrm{d}r+{ \int_{0}^{s}}
\biggl( {\int_{0}^{s}} \frac{1}{Q_{\varepsilon}}
\mathrm{e}^{\fracb{Q_{u}-Q_{r}}{Q_{\varepsilon}}%
}\varphi ( Y_{u} ) \mathbh{1}_{ [  0,r ]  }
( u )\,\mathrm{d}Q_{u} \biggr)\,\mathrm{d}r
\nonumber
\\
&&{}-{\int_{0}^{t}} \biggl( {\int
_{0}^{t}}\frac
{1}{Q_{\varepsilon}}\mathrm{e}^{\fracb{Q_{u}-Q_{r}}{Q_{\varepsilon}}}
\varphi ( Y_{u} ) \mathbh{1}_{ [  0,r ]  } ( u )\,
\mathrm{d}Q_{u} \biggr)\,\mathrm{d}r
\\
&=&\varphi ( Y_{0} ) {\int_{t}^{s}}
\mathrm{e}^{\fraca{-Q_{r}%
}{Q_{\varepsilon}}}\,\mathrm{d}r+{ \int_{0}^{s}}
\biggl( \varphi ( Y_{u} ) {\int_{0}^{s}}
\frac{1}{Q_{\varepsilon}}%
\mathrm{e}^{\fracb{Q_{u}-Q_{r}}{Q_{\varepsilon}}}\mathbh{1}_{ [  u,s ]
}
( r )\,\mathrm{d}r \biggr)\,\mathrm{d}Q_{u}
\nonumber
\\
&&{}-{\int_{0}^{t}} \biggl( \varphi (
Y_{u} ) {\int_{0}^{t}}
\frac{1}{Q_{\varepsilon}}\mathrm{e}^{\fracb{Q_{u}-Q_{r}%
}{Q_{\varepsilon}}}\mathbh{1}_{ [  u,t ]  } ( r )\,
\mathrm{d}r \biggr)\,\mathrm{d}Q_{u}
\nonumber
\end{eqnarray} %
and
%
%
\begin{eqnarray}
\label{ineq Stieltjes psi} %
&&{\int_{t}^{s}} \psi
\bigl( M_{r}^{\varepsilon} \bigr) ( 1-\alpha_{r} )\,
\mathrm{d}Q_{r}\nonumber\\
&&\quad\leq{\int_{t}^{s}}
\mathrm{e}^{\fraca{-Q_{r}%
}{Q_{\varepsilon}}} \psi ( Y_{0} )\,\mathrm{d}A_{r}+{
\int_{t}%
^{s}} \biggl( {\int
_{0}^{r}} \frac{1}{Q_{\varepsilon}}%
\mathrm{e}^{\fracb{Q_{u}-Q_{r}}{Q_{\varepsilon}}}\psi ( Y_{u} )\,\mathrm{d}Q_{u}
\biggr)\,\mathrm{d}A_{r}
\nonumber
\\
&&\quad=\psi ( Y_{0} ) {\int_{t}^{s}}
\mathrm{e}^{\fraca{-Q_{r}%
}{Q_{\varepsilon}}}\,\mathrm{d}A_{r}+{ \int
_{0}^{s}} \biggl( {\int_{0}^{r}}
\frac{1}{Q_{\varepsilon}}\mathrm{e}^{\fracb{Q_{u}-Q_{r}%
}{Q_{\varepsilon}}}\psi ( Y_{u} )\,
\mathrm{d}Q_{u} \biggr)\,\mathrm{d}A_{r}
\\
&&\qquad{}-{\int_{0}^{t}} \biggl( {\int
_{0}^{r}}\frac
{1}{Q_{\varepsilon}}\mathrm{e}^{\fracb{Q_{u}-Q_{r}}{Q_{\varepsilon}}}
\psi ( Y_{u} )\,\mathrm{d}Q_{u} \biggr)\,
\mathrm{d}A_{r}\nonumber
\\
&&\quad=\psi ( Y_{0} ) {\int_{t}^{s}}
\mathrm{e}^{\fraca{-Q_{r}%
}{Q_{\varepsilon}}}\,\mathrm{d}A_{r}+{ \int
_{0}^{s}} \biggl( \psi ( Y_{u} ) {
\int_{0}^{s}}\frac{1}{Q_{\varepsilon}}%
\mathrm{e}^{\fracb{Q_{u}-Q_{r}}{Q_{\varepsilon}}}\mathbh{1}_{ [  u,s ]
} ( r )\,\mathrm{d}A_{r}
\biggr)\,\mathrm{d}Q_{u}
\nonumber
\\
&&\qquad{}-{\int_{0}^{t}} \biggl( \psi (
Y_{u} ) {\int_{0}^{t}}
\frac{1}{Q_{\varepsilon}}\mathrm{e}^{\fracb{Q_{u}-Q_{r}%
}{Q_{\varepsilon}}} \mathbh{1}_{ [  u,t ]  } ( r )\,
\mathrm{d}A_{r} \biggr)\,\mathrm{d}Q_{u} .
\nonumber
\end{eqnarray}
On the other hand, using Remark \ref{limit in Stieltjes integral 2} and
Lebesgue's dominated convergence theorem, we conclude that
%
\begin{equation}
\label{ineq 5 Lemma} \lim_{\varepsilon\rightarrow0}\int_{u}^{s}
\frac{1}{Q_{\varepsilon}%
}\mathrm{e}^{\fracb{Q_{u}-Q_{r}}{Q_{\varepsilon}}}\alpha_{r}\,
\mathrm{d}Q_{r}=\lim_{\varepsilon\rightarrow0}\int_{u}^{t}
\frac{1}{Q_{\varepsilon}}%
\mathrm{e}^{\fracb{Q_{u}-Q_{r}}{Q_{\varepsilon}}}\alpha_{r}\,
\mathrm{d}Q_{r}= \alpha_{u}%
, \qquad
\mbox{a.e.}%
\end{equation}
and respectively,
%
\begin{eqnarray}
\label{ineq 6 Lemma}
 \lim_{\varepsilon\rightarrow0}\int_{u}^{s}
\frac{1}{Q_{\varepsilon}%
}\mathrm{e}^{\fracb{Q_{u}-Q_{r}}{Q_{\varepsilon}}} ( 1-\alpha_{r} )\,
\mathrm{d}Q_{r}&=&\lim_{\varepsilon\rightarrow0}\int_{u}^{t}
\frac{1}%
{Q_{\varepsilon}}\mathrm{e}^{\fracb{Q_{u}-Q_{r}}{Q_{\varepsilon}}} ( 1-\alpha _{r} )\,
\mathrm{d}Q_{r}\nonumber\\[-8pt]\\[-8pt]
&=&\alpha_{u}, \qquad\mbox{a.e.}\nonumber %
\end{eqnarray}
From inequalities (\ref{ineq Stieltjes phi}) and (\ref{ineq Stieltjes psi}), we
obtain
\begin{eqnarray*}
&&{\int_{t}^{s}{ \Psi} (
r,Y_{r} ) }\,\mathrm{d}Q_{r}\\
&&\quad\leq {\int
_{t}^{s}{ \Psi} \bigl( r,M_{r}^{\varepsilon}
\bigr) }\,\mathrm{d}Q_{r}
\\
&&\quad\leq\varphi ( Y_{0} ) Q_{\varepsilon}\mathrm{e}^{\fraca{-Q_{t}}%
{Q_{\varepsilon}}}{
\int_{t}^{s}}\frac{1}{Q_{\varepsilon}}%
\mathrm{e}^{\fracb{Q_{t}-Q_{r}}{Q_{\varepsilon}}}\,\mathrm{d}r+\psi ( Y_{0} ) Q_{\varepsilon}
\mathrm{e}^{\fraca{-Q_{t}}{Q_{\varepsilon}}}{ \int_{t}^{s}%
}\frac{1}{Q_{\varepsilon}}\mathrm{e}^{\fracb{Q_{t}-Q_{r}}{Q_{\varepsilon}}}\,\mathrm{d}A_{r}
\\
&&\qquad{}+{\int_{0}^{s}} \biggl( \varphi (
Y_{u} ) {\int_{u}^{s}}
\frac{1}{Q_{\varepsilon}}\mathrm{e}^{\fracb{Q_{u}-Q_{r}%
}{Q_{\varepsilon}}}\,\mathrm{d}r+\psi ( Y_{u}
) {\int_{u}^{s}%
}\frac{1}{Q_{\varepsilon}}
\mathrm{e}^{\fracb{Q_{u}-Q_{r}}{Q_{\varepsilon}}}\,\mathrm{d}A_{r} \biggr)\,
\mathrm{d}Q_{u}
\\
&&\qquad{}-{\int_{0}^{t}} \biggl( \varphi (
Y_{u} ) {\int_{u}^{t}}
\frac{1}{Q_{\varepsilon}}\mathrm{e}^{\fracb{Q_{u}-Q_{r}%
}{Q_{\varepsilon}}}\,\mathrm{d}r+\psi ( Y_{u}
) {\int_{u}^{t}%
}\frac{1}{Q_{\varepsilon}}
\mathrm{e}^{\fracb{Q_{u}-Q_{r}}{Q_{\varepsilon}}}\,\mathrm{d}A_{r} \biggr)\,
\mathrm{d}Q_{u},
\end{eqnarray*}
and applying the limits (\ref{ineq 5 Lemma}) and (\ref{ineq 6 Lemma}), we deduce
%
\begin{eqnarray}
\label{ineq 4 Lemma} {\int_{t}^{s}{\Psi} \bigl(
r,M_{r}^{\varepsilon} \bigr) }\,\mathrm{d}Q_{r} &
\xrightarrow{} {\varepsilon\rightarrow0}&%
{\int_{t}^{s}}
\bigl(\varphi ( Y_{u} ) \alpha_{u}+\psi (
Y_{u} ) ( 1-\alpha_{u} ) \bigr)\,\mathrm{d}Q_{u}
\nonumber
\\[-8pt]
\\[-8pt]
&=&{\int_{t}^{s}{\Psi } ( u,Y_{u} )
}\,\mathrm{d}Q_{u} .
\nonumber
\end{eqnarray}
Therefore (\ref{ineq 1 Lemma}) follows immediately, since we have
(\ref{ineq 3 Lemma}) and (\ref{ineq 4 Lemma}).

Now returning to inequality (\ref{uniqueness_Gronwall}), for all $0\leq t\leq s$,
%
\begin{eqnarray}
\label{uniqueness_Gronwall 2} %
&& \bigl\llvert Y_{t}^{1}-Y_{t}^{2}
\bigr\rrvert ^{2}+{\int_{t}^{s} } \bigl
\llvert Z_{r}^{1}-Z_{r}^{2} \bigr
\rrvert ^{2}\,\mathrm{d}r
\nonumber
\\
&&\quad\leq \bigl\llvert Y_{s} ^{1}-Y_{s}^{2}
\bigr\rrvert ^{2}+{{2}\int_{t}^{s}}
\bigl\langle Y_{r} ^{1}-Y_{r}^{2},
\Phi \bigl( r,Y_{r}^{1},Z_{r}^{1}
\bigr) -\Phi \bigl( r,Y_{r}^{2},Z_{r}^{2}
\bigr) \bigr\rangle\,\mathrm{d}Q_{r}
\\
&&\qquad{}-2{\int_{t}^{s}} \bigl\langle
Y_{r}^{1}-Y_{r}^{2}, \bigl(
Z_{r}^{1}-Z_{r}^{2} \bigr)\,
\mathrm{d}W_{r} \bigr\rangle.
\nonumber
\end{eqnarray}
From (\ref{a priori estim 2}),
\[
\bigl\langle Y_{r}^{1}-Y_{r}^{2},
\Phi \bigl( r,Y_{r}^{1},Z_{r}^{1}
\bigr) -\Phi \bigl( r,Y_{r}^{2},Z_{r}^{2}
\bigr) \bigr\rangle\,\mathrm{d}Q_{r}\leq\bigl\llvert Y_{r}^{1}%
-Y_{r}^{2}\bigr\rrvert ^{2}\,\mathrm{d}
\tilde{V}_{r}+ \frac{1}{2a}\bigl\llvert Z_{r}-Z_{r}^{2}
\bigr\rrvert ^{2}\,\mathrm{d}r,
\]
and therefore inequality (\ref{uniqueness_Gronwall 2}) becomes
\begin{eqnarray*}
&& \bigl\llvert Y_{t}^{1}-Y_{t}^{2}
\bigr\rrvert ^{2}+ ( 1-1/a ) {\int_{t}^{s}}
\bigl\llvert Z_{r}^{1}-Z_{r}^{2}
\bigr\rrvert ^{2}\,\mathrm{d}r
\\
&&\quad\leq \bigl\llvert Y_{s}^{1}-Y_{s}^{2}
\bigr\rrvert ^{2}+{{2}\int_{t}^{s}}
\bigl\llvert Y_{r}^{1}-Y_{r}^{2}
\bigr\rrvert ^{2}\,\mathrm{d}\tilde{V}%
_{r}-2{
\int_{t}^{s}} \bigl\langle Y_{r}^{1}-Y_{r}^{2},
\bigl( Z_{r}%
^{1}-Z_{r}^{2}
\bigr)\,\mathrm{d}W_{r} \bigr\rangle.
\end{eqnarray*}
Applying a Gronwall's type stochastic inequality (see Lemma 12 from the
Appendix of Maticiuc and R\u{a}\c{s}canu \cite{ma-ra/07}) we conclude that, for all $0\leq t\leq s$,
$\mathbb{P}$-a.s.
\[
\mathrm{e}^{2\tilde{V}_{t}} \bigl\llvert Y_{t}^{1}-Y_{t}^{2}
\bigr\rrvert ^{2}\leq \mathrm{e}^{2\tilde{V}_{s}} \bigl\llvert
Y_{s}^{1}-Y_{s}^{2} \bigr\rrvert
^{2}-2{\int_{t}^{s}}
\mathrm{e}^{2\tilde{V}_{r}} \bigl\langle Y_{r}^{1}-Y_{r}^{2},
\bigl( Z_{r}^{1}%
-Z_{r}^{2}
\bigr)\,\mathrm{d}W_{r} \bigr\rangle.
\]
Therefore, using also the condition (\ref{definition sol 4})(iii) form the
definition of weak variational solution, the uniqueness follows.\hfill
\end{pf}

\section{Examples}

Let $\mathcal{D}\subset\mathbb{R}^{d}$ be an open bounded subset with boundary
$\Bd (  \mathcal{D} )  $ sufficiently smooth. In what follows
$H^{m} (  \mathcal{D} )  $ and $H_{0}^{m} (  \mathcal{D} )
$ stand for the usual Sobolev spaces. Let $(\Omega,\mathcal{F}, (
\mathcal{F}_{t} )  _{t\geq0},\mathbb{P})$ be a complete probability
space, $ \{  W_{s}\dvtx 0\leq s\leq t \}  $ a real Wiener process and set
$H=H_{1}:=L^{2} (  \mathcal{D} )  $. We notice that the space of
Hilbert--Schmidt operators from $L^{2} (  \mathcal{D} )  $ to
$L^{2} (  \mathcal{D} )$ can be identified with $L^{2} (
\mathcal{D}\times\mathcal{D} )  $.

Let $j\dvtx \mathbb{R\rightarrow(\infty},{\infty]}$ be a proper convex
l.s.c. function, for which we assume that $j (  u )  \geq j (
0 )  =0$, $\forall u\in\mathbb{R}$.

Our aim is to obtain, via Theorem \ref{main result 1}, the existence and
uniqueness of the solution for some backward stochastic partial differential
equations (SPDE) suggested in Pardoux and R\u{a}\c{s}canu \cite{pa-ra/99}. We recall assumptions
{(A$_{1}$)--(A$_{5}$)}, {(A$_{8}$)}(\ref{Assumption A_9}), condition $\mathbb{E} (  Q_{T}^{p} )  <\infty$, $\forall T>0$, and
definitions of $\Phi$ and $V$ from Section \ref{sec2.2}.

\begin{example}
First we consider the backward SPDE with Dirichlet boundary condition
%
\begin{equation}
\label{BSPDE 1} \cases{\displaystyle -dY ( t,x ) +\partial j \bigl( Y ( t,x )
\bigr)\,\mathrm{d}Q_{t}\ni\Delta Y ( t,x )\,\mathrm{d}Q_{t}+
\Phi \bigl( t,Y ( t,x ) ,Z ( t,x ) \bigr)\,\mathrm{d}Q_{t}\vspace*{2pt}
\cr
\displaystyle\hspace*{127pt}{} -Z ( t,x )\,\mathrm{d}W_{t},\qquad
\mbox{in }\Omega\times [ 0,\tau ] \times \mathcal{D},\vspace*{2pt}
\cr
\displaystyle{Y ( \omega,t,x ) =0 \qquad\mbox{on }\Omega \times [ 0,\tau ] \times
\Bd ( \mathcal{D} ) , }\vspace*{2pt}
\cr
\displaystyle{\mathrm{e}^{2V_{T}}
\bigl\llVert Y ( T ) -\xi_{T} \bigr\rrVert ^{2}+{\int
_{T}^{\infty}}\mathrm{e}^{2V_{s}} \bigl\llVert Z (
s ) -\zeta_{s} \bigr\rrVert ^{2}\,\mathrm{d}s \xrightarrow{T
\rightarrow\infty} {\mbox{{\fontsize{7.6}{9.6}\selectfont{prob.}}}} 0,}} %
\end{equation}
where $\llVert  f\rrVert  ^{2}:={\int_{\mathcal{D}}}\llvert  f (
x )  \rrvert  ^{2}\,\mathrm{d}x $.
\end{example}

Let us apply Theorem \ref{main result 1}, with $\Psi=\varphi=\psi$ (in which
case the compatibility assumptions (\ref{compatib. assumption}) are
satisfied), where $\varphi\dvtx L^{2} (  \mathcal{D} )  \rightarrow
(-\infty,\infty]$ is given by
\[
\varphi ( u ) = \cases{\displaystyle \frac{1}{2}\int_{\mathcal{D}}
\bigl\llvert \nabla u ( x ) \bigr\rrvert ^{2}\,\mathrm{d}x+\int
_{\mathcal{D}}j \bigl( u ( x ) \bigr)\,\mathrm{d}x, &\quad\mbox{if }
$u\in H_{0}^{1} ( \mathcal{D} ) , j ( u ) \in
L^{1} ( \mathcal{D} )$,\vspace*{2pt}
\cr
\displaystyle +\infty, &\quad
\mbox{otherwise.}%
}
\]
Proposition 2.8 from Barbu \cite{ba/10}, Chapter II, shows that the following
properties hold:
\begin{enumerate}[(b)]
\item[(a)] function $\mathit{\varphi}$ is proper, convex and l.s.c.,

\item[(b)]  $\partial\varphi ( u ) =  \{ u^{\ast}\in L^{2} (
\mathcal{D} ) \dvtx  u^{\ast} ( x ) \in\partial j  ( u ( x )  ) -\Delta
u ( x ) \mbox{ a.e. on }\mathcal{D}  \} , \forall u\in\Dom ( \partial\varphi )$,

\item[(c)]  $ \Dom ( \partial \varphi ) =  \{ u\in
H_{0}^{1} ( \mathcal{D} ) \cap H^{2} (
\mathcal{D} ) \dvtx u ( x ) \in\Dom ( \partial j ) \mbox{ a.e. on }\mathcal{D}  \}$.
\end{enumerate}
Moreover, there exists a positive constant $C$ such that
\begin{enumerate}[(d)]

\item[(d)] $\llVert u\rrVert
_{H_{0}^{1} (  \mathcal{D}%
)  \cap H^{2} (  \mathcal{D} )  }\leq C \llVert u^{\ast
} \rrVert
_{L^{2} (  \mathcal{D} )  }, \forall  ( u,u^{\ast
}  ) \in\partial\varphi$.
\end{enumerate}

Let $\eta$ be a $H_{0}^{1} (  \mathcal{D} )  $-valued random
variable, $\mathcal{F}_{\tau}$-measurable such that {(A$_{9}$)} is
satisfied and
\[
\mathbb{E} \bigl[\mathrm{e}^{p\sup_{s\in [  0,\tau ]  }\tilde{V}_{s}}\llvert \eta\rrvert ^{p}
\bigr]< \infty,\qquad \mathrm{e}^{2\sup_{s\in [  0,\tau ]
}\tilde{V}_{s}}j ( \eta ) \in L^{1} (
\Omega\times \mathcal{D} ) ,
\]
and the stochastic processes $\xi,\zeta$, associated to $\eta$ by the
martingale representation theorem, such that
%
\begin{equation}
\label{Assumption A_8'} \mathbb{E} \biggl( {\int_{0}^{\tau}}
\mathrm{e}^{2\tilde{V}_{s}} \varphi ( \xi _{s} )\,\mathrm{d}Q_{s}
\biggr) ^{p/2}+ \mathbb{E} \biggl( {\int_{0}^{\tau}%
}\mathrm{e}^{\tilde{V}_{s}} \bigl\llvert \Phi ( s,\xi_{s},
\zeta_{s} ) \bigr\rrvert\,\mathrm{d}Q_{s} \biggr) ^{p}<
\infty, %
\end{equation}
where $\tilde{V}$ is defined by (\ref{def V tilde}).

Applying now Theorem \ref{main result 1}, we deduce that, under the above
assumptions, backward SPDE (\ref{BSPDE 1}) has a unique solution $ (
Y,Z,U )  \in\mathcal{S}_{L^{2} (  \mathcal{D} )  }^{0}%
\times\Lambda_{L^{2} (  \mathcal{D}\times\mathcal{D} )  }^{0}%
\times\Lambda_{L^{2} (  \mathcal{D} )  }^{0}$ such that $ (
Y (  t )  ,Z (  t )   )  = (  \xi_{t},\zeta
_{t} )  = (  \eta,0 )  $, for $t\geq\tau$, and
\begin{enumerate}[(iii)] %
\item[(i)]  $Y ( t,x ) +{\int_{t}^{T}}\!U ( s,x )\,\mathrm{d}Q_{s}=Y ( T,x ) +{
\int_{t}^{T}}\!\Delta Y ( s,x )\,\mathrm{d}Q_{s}
+{\int_{t}^{T}}\!\Phi  ( s,Y ( s,x ) ,  Z ( s, x
)  )\,\mathrm{d}Q_{s}-{\int_{t}^{T}}\!Z ( s,x )\,\mathrm{d}W_{s} , \mbox{in } [ 0,T ] \times\mathcal{D}$, a.s.,

\item[(ii)]  $Y ( t ) \in H_{0}^{1} ( \mathcal{D})
\cap H^{2} ( \mathcal{D} ) , \mathrm{d}\mathbb{P}\times \mathrm{d}t$ a.e.,

\item[(iii)] $Y ( t,x ) \in\Dom ( \partial j ) , \mathrm{d}\mathbb{P}\times \mathrm{d}Q_{t}
\times \mathrm{d}x$ a.e.,

\item[(iv)] $U ( t,x ) \in\partial j  ( Y ( t,x )  ) , \mathrm{d}\mathbb{P}\times
\mathrm{d}Q_{t}\times \mathrm{d}x$ a.e.,

\item[(v)] $\mathrm{e}^{2\tilde{V}}Y\in L^{\infty}  ( 0,T;L^{2}  (
\Omega;H_{0}^{1} ( \mathcal{D} )  )  )$  and
$\mathrm{e}^{2\tilde{V}}j(Y)\in L^{\infty}  ( 0,T;L^{1}(\Omega\times
\mathcal{D})  ), \forall T>0$,

\item[(vi)] $\mathbb{E} {\int_{0}^{\tau}}\mathrm{e}^{2\tilde{V}%
_{s}}
\llVert Y ( s )  \rrVert _{H_{0}^{1} (
\mathcal{D} )  \cap H^{2} (  \mathcal{D} )  }^{2}\,\mathrm{d}Q_{s}%
<{\infty}$.
\end{enumerate}

\begin{remark}
If we renounce at assumption {(A$_{9}$)}, then it follows that
backward SPDE (\ref{BSPDE 1}) admits a variational weak solution. More
precisely, Theorem \ref{main result 2} shows that there exists a unique
solution $ (  Y,Z )  \in\mathcal{S}_{L^{2} (  \mathcal{D} )
}^{0}\times\Lambda_{L^{2} (  \mathcal{D}\times\mathcal{D} )  }^{0}$
such that $ (  Y (  t )  ,Z (  t )   )  = (
\xi_{t},\zeta_{t} )  = (  \eta,0 )  $, for $t\geq\tau$, and for
all $0\leq t\leq s$
\begin{enumerate}[(iii)]
\item[(i)]
$ {{\frac{1}{2}}}
\llVert M ( t ) -Y ( t )  \rrVert ^{2}+{{\frac{1}{2}}\int_{t}^{s}} \llvert R ( r ) -Z ( r )
\rrvert ^{2}\,\mathrm{d}r+{\int_{t}^{s}\int_{\mathcal{D}}j}  ( Y ( r,x )  )\,\mathrm{d}x\,\mathrm{d}Q_{r}
\leq{{\frac{1}{2}}} \llVert M ( s ) -Y ( s )  \rrVert
^{2}+{\int_{t}^{s}\int_{\mathcal{D}}j}  ( M ( r,x )  )\,\mathrm{d}x\,\mathrm{d}Q_{r}
+{\int_{t}^{s}} \langle  \langle M ( r
) -Y ( r ) ,N ( r ) -\Delta Y ( r ) -\Phi  ( r,Y ( r ) ,\allowbreak Z ( r )  )
\rangle  \rangle\,\mathrm{d}Q_{r}
-\int_{t}^{s} \langle  \langle M ( r
) -Y ( r ) ,  ( R ( r ) -Z ( r )  )\,\mathrm{d}W_{r}  \rangle
\rangle ,
\forall ( N,R ) \in L^{2}  ( \Omega
\times[0,{\infty} );\allowbreak L^{2} ( \mathcal{D} ) ) \times
\Lambda_{L^{2} (  \mathcal{D}\times\mathcal{D} )
}^{2}, \forall M\in\mathcal{M}$,

\item[(ii)]  $Y ( t ) \in H_{0}^{1} ( \mathcal{D}), \mathrm{d}\mathbb{P}\times \mathrm{d}t$ a.e.,

\item[(iii)]  $Y ( t,x ) \in\Dom ( j ) , \mathrm{d}\mathbb{P}\times \mathrm{d}Q_{t}\times \mathrm{d}x$ a.e.,
\end{enumerate}
where $\mathcal{M}$ is defined at the beginning of the Section \ref{weak form}
and
\[
\llVert f\rrVert ^{2}:={\int_{\mathcal{D}}} \bigl\llvert f
( x ) \bigr\rrvert ^{2}\,\mathrm{d}x\quad\mbox{and}\quad \bigl\langle
\langle f,g \rangle \bigr\rangle:={\int_{\mathcal{D}}}f ( x ) g ( x )
\,\mathrm{d}x.
\]
\end{remark}

\begin{example}
As a second example we consider the backward SPDE with Neumann boundary
condition
%
\begin{equation}
\label{BSPDE 2} \cases{\displaystyle -\mathrm{d}Y ( t,x ) =\Delta Y ( t,x )\,
\mathrm{d}Q_{t}\vspace*{2pt}
\cr
\displaystyle\hspace*{53pt} {}+\Phi
\bigl( t,Y ( t,x ) ,Z ( t,x ) \bigr)\,\mathrm{d}Q_{t}-Z_{t}
\,\mathrm{d}W_{t},\qquad\mbox{in }\Omega\times [ 0,\tau ] \times
\mathcal{D}%
,\vspace*{2pt}
\cr
\displaystyle {-\frac{\partial Y (  \omega,t,x )  }{\partial
n}\in
\partial j \bigl( Y ( \omega,t,x ) \bigr) ,\qquad \mbox{on }\Omega\times [ 0,\tau
] \times\Bd ( \mathcal{D}) , }\vspace*{2pt}
\cr
\displaystyle{
\mathrm{e}^{2V_{T}} \bigl\llVert Y ( T ) -\xi_{T} \bigr\rrVert
^{2}+{\int_{T}^{\infty}}\mathrm{e}^{2V_{s}}
\bigl\llVert Z ( s ) -\zeta_{s} \bigr\rrVert ^{2}\,
\mathrm{d}s \xrightarrow{T\rightarrow\infty} {\mbox{{\fontsize{7.6}{9.6}\selectfont{prob.}}}} 0.}}%
\end{equation}
\end{example}

We apply again Theorem \ref{main result 1}, with $\Psi=\varphi=\psi$, where
$\varphi\dvtx L^{2} (  \mathcal{D} )  \rightarrow(-\infty,\infty]$ is
given by
\[
\varphi ( u ) = \cases{\displaystyle%
\frac{1}{2}\int
_{\mathcal{D}} \bigl\llvert \nabla u ( x ) \bigr\rrvert ^{2}
\,\mathrm{d}x+\int_{\Bd (  \mathcal{D}%
)  }j \bigl( u ( x ) \bigr)\,\mathrm{d}x,&
\quad \mbox{if }$u\in H^{1} ( \mathcal{D} )$ \mbox{ and }$k ( u ) \in
L^{1} \bigl( \Bd ( \mathcal{D} ) \bigr)$,
\cr
\displaystyle+\infty,&
\quad\mbox{otherwise.}%
}
\]
Proposition 2.9 from Barbu \cite{ba/10}, Chapter II, shows that:
\begin{enumerate}[(b)]
\item[(a)] function $\varphi$ is proper, convex and l.s.c.,

\item[(b)] $\partial\varphi ( u ) =-\Delta u ( x ) , \forall u\in\Dom ( \partial
\varphi )$,

\item[(c)]  $\Dom ( \partial\varphi ) = \{u\in H^{2} ( \mathcal{D} )
\dvtx -\frac{\partial u (  x )
}{\partial n}\in\partial j  ( u ( x )  )\mbox{  a.e. on }\Bd (\mathcal{D} )  \}$,
\end{enumerate}
where $\frac{\partial u}{\partial n}$ is the outward normal
derivative to the boundary.

Moreover, there are some positive constants $C_{1}$, $C_{2}$ such that
\begin{enumerate}[(d)]
\item[(d)] $\llVert u\rrVert
_{H^{2} (  \mathcal{D} )
}\leq C_{1}\llVert u-\Delta u\rrVert
_{L^{2} (  \mathcal{D} )
}+C_{2} , \forall u\in\Dom ( \partial\varphi ) $.
\end{enumerate}

Let $\eta$ be a $H^{1} (  \mathcal{D} )  $-valued random variable,
$\mathcal{F}_{\tau}$-measurable such that {(A$_{9}$)} is satisfied
and
\[
\mathbb{E} \bigl[\mathrm{e}^{p\sup_{s\in [  0,\tau ]  }\tilde{V}_{s}}\llvert \eta\rrvert ^{p}
\bigr]< \infty,\qquad \mathrm{e}^{2\sup_{s\in [  0,\tau ]
}\tilde{V}_{s}}j ( \eta ) \in L^{1}
\bigl( \Omega\times \Bd ( \mathcal{D} ) \bigr) ,
\]
and the stochastic processes $\xi$ and $\zeta$ (from the martingale
representation theorem) be such that (\ref{Assumption A_8'}) holds.

Applying Theorem \ref{main result 1} we conclude that, under the above
assumptions, backward SPDE (\ref{BSPDE 2}) has a unique solution $ (
Y,Z )  \in\mathcal{S}_{L^{2} (  \mathcal{D} )  }^{0}%
\times\Lambda_{L^{2} (  \mathcal{D}\times\mathcal{D} )  }^{0}$ such
that $ (  Y (  t )  ,Z (  t )   )  = (  \xi
_{t},\zeta_{t} )  = (  \eta,0 )  $, for $t\geq\tau$, and
\begin{enumerate}[(iii)]
\item[(i)]  $Y ( t,x ) =Y ( T,x ) +{\int_{t}^{T}}\Delta Y ( s,x )\,\mathrm{d}Q_{s} +{\int_{t}^{T}}\Phi  ( s,Y ( s,x ) ,Z ( s,x
)  )\,\mathrm{d}Q_{s}-{\int_{t}^{T}}Z ( s,x )\,\mathrm{d}W_{s}$, in  $[ 0,T ] \times\mathcal{D}%
$,  a.s.,

\item[(ii)] $Y ( t ) \in H^{2} ( \mathcal{D} ) , \mathrm{d}\mathbb{P}\times \mathrm{d}t$
a.e.,

\item[(iii)] $-\frac{\partial Y (  t,x )  }{\partial n}%
\in\Dom ( \partial j ) , \mathrm{d}\mathbb{P}
\times \mathrm{d}Q_{t}\times \mathrm{d}x$ a.e.,

\item[(iv)] $\mathrm{e}^{2\tilde{V}}Y\in L^{\infty }  (0,T;L^{2} (
\Omega ;H^{1}(\mathcal{D}) )  )\mbox{ and }\mathrm{e}^{2\tilde{V}}j(Y)
\in L^{\infty }  (0,T;L^{1} (\Omega \times \Bd(
\mathcal{D}) )  ), \forall T>0$,

\item[(v)] $\mathbb{E} {\int_{0}^{\tau}}\mathrm{e}^{2\tilde{V}_{s}%
}
\llVert Y ( s )  \rrVert _{H^{2} (  \mathcal{D} )
}^{2}\,\mathrm{d}Q_{s}<{\infty}$.
\end{enumerate}

\begin{example}
The third example is the backward stochastic porous media equation
%
\begin{equation}
\label{BSPDE 3} \cases{\displaystyle -\mathrm{d}Y ( t,x ) =\Delta ( \partial j ) \bigl( Y (
t,x ) \bigr)\,\mathrm{d}Q_{t}+\Phi \bigl( t,Y ( t,x ) ,Z ( t,x )
\bigr)\,\mathrm{d}Q_{t}
\cr
\displaystyle\hspace*{56pt} {}-Z ( t,x )\,
\mathrm{d}W_{t} , \qquad\mbox{in }\Omega\times [ 0,\tau ] \times
\mathcal{D},
\cr
\displaystyle{\partial j \bigl( Y ( \omega,t,x ) \bigr) \ni0
\qquad \mbox{on } \Omega\times [ 0,\tau ] \times\Bd ( \mathcal{D} ) , }
\cr
\displaystyle {\mathrm{e}^{2V_{T}} \bigl\llVert Y ( T ) -\xi_{T}
\bigr\rrVert ^{2}+{\int_{T}^{\infty}}
\mathrm{e}^{2V_{s}} \bigl\llVert Z ( s ) -\zeta_{s} \bigr\rrVert
^{2}\,\mathrm{d}s %
\xrightarrow{T\rightarrow\infty}
{\mbox{{\fontsize{7.6}{9.6}\selectfont{prob.}}}}0.}} %
\end{equation}
\end{example}

In Theorem \ref{main result 1}, let $H=H^{-1} (  \mathcal{D} )  $
(the dual of $H_{0}^{1} (  \mathcal{D} )  $), $H_{1}=\mathbb{R}^{d}$
and $\Psi=\varphi=\psi$, where $\varphi\dvtx H^{-1} (  \mathcal{D} )
\rightarrow(-\infty,\infty]$ is given by
\[
\varphi ( u ) = \cases{\displaystyle \int_{\mathcal{D}}j \bigl( u (
x ) \bigr)\,\mathrm{d}x,& if $u\in L^{1} ( \mathcal{D} ) , j ( u ) \in
L^{1} ( \mathcal{D} ) $,
\cr
\displaystyle+\infty,&
otherwise,}%
\]
and $j\dvtx \mathbb{R}\rightarrow\mathbb{R}_{+}$ is suppose, moreover, to be
continuous with $\lim_{r\rightarrow{\infty}}j (  r )
/r={\infty}$.

Proposition 2.10 from Barbu \cite{ba/10}, Chapter II, shows that:
\begin{enumerate}[(b)]
\item[(a)] function $\varphi$
is proper, convex and l.s.c.,

\item[(b)]  $\partial\varphi ( u ) = \{u^{\ast}
\in H^{-1} ( \mathcal{D} ) \dvtx u^{\ast} ( x ) =-\Delta v  ( u
( x )  ) \mbox{, }v\in H_{0}^{1} ( \mathcal{D} ),
v ( x ) \in\partial j  ( u ( x
)  ) \mbox{ a.e. on }\mathcal{D}%
\},\allowbreak \forall u\in\Dom (
\partial\varphi ) $,

\item[(c)] $\Dom ( \partial\varphi ) =
\{ u\in H^{-1} ( \mathcal{D} ) \cap L^{1} ( \mathcal{D}
) \dvtx u ( x ) \in\Dom ( \partial j ) \mbox{ a.e. on }\mathcal{D}  \} $.%
\end{enumerate}

Let $\eta$ be a $H^{-1} (  \mathcal{D} )  $-valued random variable,
$\mathcal{F}_{\tau}$-measurable such that {(A$_{9}$)} is satisfied
and
\[
\mathbb{E} \bigl[\mathrm{e}^{p\sup_{s\in [  0,\tau ]  }\tilde{V}_{s}}\llvert \eta\rrvert ^{p}
\bigr]< \infty,\qquad\eta\in L^{1} ( \Omega \times\mathcal{D} ) ,\qquad
\mathrm{e}^{2\sup_{s\in [  0,\tau ]  }%
\tilde{V}_{s}}j ( \eta ) \in L^{1} ( \Omega\times
\mathcal{D} ) ,
\]
and the stochastic processes $\xi$ and $\zeta$ (from the martingale
representation theorem) be such that (\ref{Assumption A_8'}) holds.

From Theorem \ref{main result 1} it follows that, under the above assumptions,
backward SPDE (\ref{BSPDE 3}) has a unique solution $ (  Y,Z )
\in\mathcal{S}_{H^{-1} (  \mathcal{D} )  }^{0}\times\Lambda_{ (
H^{-1} (  \mathcal{D} )   )  ^{d}}^{0}$ such that $ (
Y (  t )  ,Z (  t )   )  = (  \xi_{t},\zeta
_{t} )  = (  \eta,0 )  $, for $t\geq\tau$, and
\begin{enumerate}[(iii)]
\item[(i)]  $Y ( t,x ) +{\int_{t}^{T}}\Delta U ( s,x )\,\mathrm{d}Q_{s}=Y ( T,x
)
+{\int_{t}^{T}}\Phi  ( s,Y ( s,x ) ,Z ( s,x
)  )\,\mathrm{d}Q_{s}-{\int_{t}^{T}}Z ( s,x )\,\mathrm{d}W_{s}$, in  $[ 0,T ] \times\mathcal{D}$, a.s.,

\item[(ii)] $Y ( t,x ) \in\Dom ( \partial j ) , \mathrm{d}\mathbb{P}\times \mathrm{d}t\times \mathrm{d}x$
a.e.,

\item[(iii)] $U ( t,x ) \in\partial j  ( Y ( t,x )  ) , \mathrm{d}\mathbb{P}\times \mathrm{d}t
\times \mathrm{d}x$ a.e.,

\item[(iv)] $\mathrm{e}^{2\tilde{V}}j ( Y ) \in L^{\infty}  ( 0,T;L^{1}
( \Omega\times\mathcal{D} )  ) , \forall T>0$,

\item[(v)] $\mathbb{E} {\int_{0}^{\tau}}\mathrm{e}^{2\tilde{V}_{s}%
}
\llVert U ( s )  \rrVert _{H_{0}^{1} (  \mathcal{D}%
)  }^{2}\,\mathrm{d}Q_{s}<{\infty}$.
\end{enumerate}

\begin{appendix}
\section*{Appendix}\label{app}

In this section, we first present some useful and general estimates on $ (
Y,Z )  \in\mathcal{S}^{0} [  0,T ]  \times\Lambda^{0} (
0,T )  $ satisfying an identity of type
\[
Y_{t}=Y_{T}+\int_{t}^{T}\,
\mathrm{d}K_{s}- \int_{t}^{T}Z_{s}
\,\mathrm{d}W_{s},\qquad t\in [ 0,T ] , \ \mathbb{P}\mbox{-a.s.,}%
\]
where $K\in\mathcal{S}^{0} [  0,T ]  $ and $t\longmapsto K_{t} (
\omega )  $ is a bounded variation function, $\mathbb{P}$-a.s.

The following results are proved in monograph Pardoux and R\u{a}\c{s}canu \cite{pa-ra/13}, Annex C.

\setcounter{equation}{0}

Assume there exist: three progressively measurable increasing continuous
stochastic processes $D,R,N$ such that $D_{0}=R_{0}=N_{0}=0$, a progressively
measurable bounded variation continuous stochastic process $V$ with $V_{0}=0$,
some constants $a,p>1$ such that, as signed measures on $ [  0,T ]$:
%
\begin{equation}
\label{ineq D,R,N} \mathrm{d}D_{t}+ \langle Y_{t},\mathrm{d}K_{t}
\rangle \leq \bigl( \mathbh{1}_{p\geq
2}\,\mathrm{d}R_{t}+
\llvert Y_{t}\rrvert \,\mathrm{d}N_{t}+\llvert
Y_{t}\rrvert ^{2}\,\mathrm{d}V_{t} \bigr) +
\frac{n_{p}}{2a}\llVert Z_{t}\rrVert ^{2}\,\mathrm{d}t,
\end{equation}
where $n_{p}= (  p-1 )  \wedge1$.

Let $\llVert  \mathrm{e}^{V}Y\rrVert  _{ [  t,T ]  }:=\sup_{s\in [
t,T ]  }\llvert  \mathrm{e}^{V_{s}}Y_{s}\rrvert  $.

\begin{proposition}\label{prop 1 Appendix}
Assume \textup{(\ref{ineq D,R,N})} and that
\[
\mathbb{E} \bigl\llVert Y\mathrm{e}^{V} \bigr\rrVert
_{ [  0,T ]  }^{p}%
+\mathbb{E} \biggl(\int
_{0}^{T}\mathrm{e}^{2V_{s}}
\mathbh{1}_{p\geq2}\,\mathrm{d}R_{s} \biggr)^{p/2}%
+ \mathbb{E} \biggl(\int_{0}^{T}
\mathrm{e}^{V_{s}}\,\mathrm{d}N_{s} \biggr)^{p}<
\infty.
\]
Then there exists a positive constant $C=C (  a,p )  $ such that,
$\mathbb{P}$-a.s., for all $t\in [  0,T ]  $:
\begin{eqnarray*}
&&\mathbb{E}^{\mathcal{F}_{t}} \biggl[\bigl\|
\mathrm{e}^{V}Y\bigr\|_{ [  t,T ]
}^{p}+ \biggl(\int
_{t}^{T}\mathrm{e}^{2V_{s}}\,
\mathrm{d}D_{s} \biggr)^{p/2}+ \biggl(\int
_{t}^{T}\mathrm{e}^{2V_{s}%
} \llVert
Z_{s}\rrVert ^{2}\,\mathrm{d}s \biggr)^{p/2}
\biggr]
\\
&&\qquad{}+\mathbb{E}^{\mathcal{F}_{t}} \biggl[\int_{t}^{T}
\mathrm{e}^{pV_{s}%
} \llvert Y_{s}\rrvert ^{p-2}
\mathbh{1}_{Y_{s}\neq0}\,\mathrm{d}D_{s}+ \int_{t}%
^{T}\mathrm{e}^{pV_{s}}\llvert Y_{s}\rrvert
^{p-2}\mathbh{1}_{Y_{s}\neq
0}\llVert Z_{s}\rrVert
^{2}\,\mathrm{d}s \biggr]
\\
&&\quad\leq C \mathbb{E}^{\mathcal{F}_{t}} \biggl[ \bigl\llvert
\mathrm{e}^{V_{T}%
}Y_{T} \bigr\rrvert ^{p}+ \biggl(
\int_{t}^{T}\mathrm{e}^{2V_{s}}
\mathbh{1}_{p\geq2}\,\mathrm{d}R_{s} \biggr)^{p/2}+
\biggl(\int_{t}^{T}\mathrm{e}^{V_{s}}\,
\mathrm{d}N_{s} \biggr)^{p} \biggr].
\end{eqnarray*}
In particular for all $t\in [  0,T ]  $:
\[
\llvert Y_{t}\rrvert ^{p}\leq C \mathbb{E}^{\mathcal{F}_{t}}
\bigl[ \bigl( \llvert Y_{T}\rrvert ^{p}+\mathbh{1}_{p\geq2}R_{T}^{p}%
+N_{T}^{p} \bigr) \mathrm{e}^{p\| (  V_{\cdot}-V_{t} )  ^{+}\|_{ [
t,T ]  }} \bigr] ,\qquad
\mathbb{P}\mbox{-a.s.}%
\]
\end{proposition}

As a simple consequence we can deduce, from the above proposition, an estimate
for the stochastic processes $ (  \xi,\zeta )  $ associated to $\eta$
as in Proposition \ref{mart repr th}:

\begin{corollary}
Let $ (  V_{t} )  _{t\geq0}$ be a bounded variation and continuous
p.m.s.p. with $V_{0}=0$, $\eta\dvtx \Omega\rightarrow H$ a random variable such that $\mathbb{E} (\mathrm{e}^{p\sup_{s\in [  0,T ]  }V_{s}}\llvert
\eta\rrvert  ^{p} )<\infty$ and $ (  \xi,\zeta )
\in\mathcal{S}^{0}\times\Lambda^{0} (  0,{\infty} )  $ the
unique solution of the following equation (see the martingale representation
formula ({\ref{mart repr th 1}})): $\xi_{s}=\mathbb{E}^{\mathcal{F}%
_{T}}\eta-{\int_{s}^{T}}\zeta_{r}\,\mathrm{d}W_{r}$, $s\in [  0,T ]  $, a.s.
Therefore, there exists $C=C (  p )  >0$ such that for all
$t\in [  0,T ]  $,
%
\begin{equation}
\label{ineq_xi_zeta} \mathbb{E}^{\mathcal{F}_{t}}\sup_{s\in [  t,T ]  }
\mathrm{e}^{pV_{s}%
}\llvert \xi_{s}\rrvert ^{p}+
\mathbb{E}^{\mathcal{F}_{t}} \biggl({\int_{t}^{T}}
\mathrm{e}^{2V_{s}}\llvert \zeta_{s}\rrvert ^{2}\,
\mathrm{d}s \biggr)^{p/2}\leq C \mathbb{E}^{\mathcal{F}_{t}} \bigl(
\mathrm{e}^{p\sup_{s\in [  t,T ]  }V_{s}%
}\llvert \eta\rrvert ^{p} \bigr). %
\end{equation}
\end{corollary}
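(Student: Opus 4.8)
The plan is to read this estimate off directly from Proposition \ref{prop 1 Appendix}, applied to the pair $(Y,Z)=(\xi,\zeta)$ with $K\equiv 0$ and $D\equiv R\equiv N\equiv 0$. Indeed, $\xi$ satisfies $\xi_s=\xi_T-\int_s^T\zeta_r\,dW_r$ on $[0,T]$, which is an identity of exactly the form considered in the Appendix but with vanishing bounded-variation part; hence (\ref{ineq D,R,N}) holds trivially, its left-hand side being $0$ and its right-hand side $\frac{n_p}{2a}\left\vert\zeta_t\right\vert^2\,dt\ge 0$. The given process $V$ plays the role of the bounded-variation process $V$ in the Proposition, so no extra discounting is introduced.

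Before invoking (\ref{ineq 2 appendix}) one must verify its standing hypothesis (\ref{ineq 1 appendix}), which here—the $R$ and $N$ contributions being absent—reduces to $\mathbb{E}\left\Vert\xi e^{V}\right\Vert_{[0,T]}^{p}<\infty$. First, taking $\mathbb{E}^{\mathcal{F}_s}$ in the defining equation and using that the stochastic integral has zero conditional mean gives $\xi_s=\mathbb{E}^{\mathcal{F}_s}\eta$ for $s\in[0,T]$. Put $V_T^{\ast}:=\sup_{r\in[0,T]}V_r$, which is $\mathcal{F}_T$-measurable and nonnegative since $V_0=0$. Because $e^{V_s}\le e^{V_T^{\ast}}$ and $e^{V_s}$ is $\mathcal{F}_s$-measurable, Jensen's inequality yields $e^{V_s}\left\vert\xi_s\right\vert\le e^{V_s}\,\mathbb{E}^{\mathcal{F}_s}\left\vert\eta\right\vert\le\mathbb{E}^{\mathcal{F}_s}\big(e^{V_T^{\ast}}\left\vert\eta\right\vert\big)$ for all $s\in[0,T]$, so $\left\Vert\xi e^{V}\right\Vert_{[0,T]}$ is dominated by the running supremum of the (uniformly integrable) martingale $s\mapsto\mathbb{E}^{\mathcal{F}_s}(e^{V_T^{\ast}}\left\vert\eta\right\vert)$. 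Doob's $L^{p}$ maximal inequality then gives $\mathbb{E}\left\Vert\xi e^{V}\right\Vert_{[0,T]}^{p}\le (p/(p-1))^{p}\,\mathbb{E}\big(e^{pV_T^{\ast}}\left\vert\eta\right\vert^{p}\big)<\infty$ by assumption, so the hypothesis is met.

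With (\ref{ineq 2 appendix}) now available (its $R$ and $N$ terms gone), one obtains, for every $t\in[0,T]$, $\mathbb{E}^{\mathcal{F}_t}\big[\sup\nolimits_{s\in[t,T]}\left\vert e^{V_s}\xi_s\right\vert^{p}+(\int_t^T e^{2V_s}\left\vert\zeta_s\right\vert^2\,ds)^{p/2}\big]\le C\,\mathbb{E}^{\mathcal{F}_t}\left\vert e^{V_T}\xi_T\right\vert^{p}$. It then remains to bound the right-hand side by $C\,\mathbb{E}^{\mathcal{F}_t}(e^{p\sup_{r\in[t,T]}V_r}\left\vert\eta\right\vert^{p})$: from $\xi_T=\mathbb{E}^{\mathcal{F}_T}\eta$ and Jensen, $\left\vert\xi_T\right\vert^{p}\le\mathbb{E}^{\mathcal{F}_T}\left\vert\eta\right\vert^{p}$, while $V_T\le\sup_{r\in[t,T]}V_r$; since the latter is $\mathcal{F}_T$-measurable, the tower property gives $\mathbb{E}^{\mathcal{F}_t}(e^{pV_T}\left\vert\xi_T\right\vert^{p})\le\mathbb{E}^{\mathcal{F}_t}\mathbb{E}^{\mathcal{F}_T}(e^{p\sup_{r\in[t,T]}V_r}\left\vert\eta\right\vert^{p})=\mathbb{E}^{\mathcal{F}_t}(e^{p\sup_{r\in[t,T]}V_r}\left\vert\eta\right\vert^{p})$, which is precisely (\ref{ineq_xi_zeta}). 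I do not expect a genuine obstacle here: the statement is essentially a corollary of the Appendix estimate, and the one step requiring a moment's care is the a priori integrability (\ref{ineq 1 appendix}) needed to license the Proposition, which is why I would dispose of it first via the Doob estimate above.
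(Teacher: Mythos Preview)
There is a gap in your verification of (\ref{ineq D,R,N}). With $K\equiv 0$ and $D\equiv R\equiv N\equiv 0$, the inequality to be checked reads
\[
0\le |\xi_t|^2\,dV_t+\frac{n_p}{2a}\,|\zeta_t|^2\,dt
\]
as signed measures, not merely $0\le\frac{n_p}{2a}|\zeta_t|^2\,dt$: you have silently dropped the term $|\xi_t|^2\,dV_t$. Since the Corollary only assumes $V$ to be of bounded variation---not increasing---the measure $dV_t$ may charge intervals negatively, and then the inequality can fail. Hence Proposition~\ref{prop 1 Appendix} with the \emph{given} $V$ is not directly applicable.

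The paper repairs exactly this point by replacing $V$ with its running-supremum envelope: for each fixed $t$ one sets $\bar V^t_s=V_t$ for $s<t$ and $\bar V^t_s=\sup_{r\in[t,s]}V_r$ for $s\ge t$. This $\bar V^t$ is increasing and continuous, so $d\bar V^t_s\ge 0$ and (\ref{ineq D,R,N}) becomes genuinely trivial. One then applies Proposition~\ref{prop 1 Appendix} with $\bar V^t$ in place of $V$, uses $V_s\le\bar V^t_s$ on $[t,T]$ to recover the left-hand side with the original weights $e^{V_s}$, and observes that $\bar V^t_T=\sup_{r\in[t,T]}V_r$ already produces the desired right-hand side. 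Your subsequent steps---the verification of (\ref{ineq 1 appendix}) via Doob's maximal inequality, and the final Jensen/tower-property bound on $\mathbb{E}^{\mathcal{F}_t}(e^{pV_T}|\xi_T|^p)$---are correct and in fact more carefully spelled out than in the paper; only the choice of weight process fed into the Proposition needs this adjustment.
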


\begin{pf}
We see at once that the stochastic pair $ (  \xi,\zeta )  $ satisfy
equation $\xi_{s}=\xi_{T}-\int_{s}^{T}\zeta_{r}\,\mathrm{d}W_{r}$, $s\in[0,T]$ a.s. For any fixed $t\in [  0,T ]  $ let $ (
\bar{V}_{s}^{t} )  _{s\in [  0,T ]  }$ be the increasing
continuous p.m.s.p. defined by $\bar{V}_{s}^{t}=V_{t}, s<t$, and $\bar{V}%
_{s}^{t}=\sup_{r\in [  t,s ]  }V_{r}, s\geq t$. Applying Jensen's
inequality and Proposition \ref{prop 1 Appendix} for $ (  \xi
,\zeta )  $ (which satisfies an inequality of type (\ref{ineq D,R,N}),
with $K=0$ and $R=N=0$), we deduce that for all $p>1$, there exists
$C=C (  p )  >0$ such that
\begin{eqnarray*}
&&\mathbb{E}^{\mathcal{F}_{t}}\sup_{s\in [  t,T ]  }\mathrm{e}^{pV_{s}%
}
\llvert \xi_{s}\rrvert ^{p}+\mathbb{E}^{\mathcal{F}_{t}}%
\biggl({\int_{t}^{T}}\mathrm{e}^{2V_{s}}
\llvert \zeta_{s}\rrvert ^{2}\,\mathrm{d}s
\biggr)^{p/2}
\\
&&\quad\leq \mathbb{E}^{\mathcal{F}_{t}}\sup_{s\in [  t,T ]
}
\mathrm{e}^{p\bar{V}_{s}^{t}}\llvert \xi_{s}\rrvert ^{p} +
\mathbb{E}^{\mathcal{F}_{t}} \biggl({\int_{t}^{T}}
\mathrm{e}^{2\bar
{V}_{s}^{t}} \llvert \zeta_{s}\rrvert ^{2}\,
\mathrm{d}s \biggr)^{p/2}
\\
&&\quad\leq C \mathbb{E}^{\mathcal{F}_{t}} \bigl(\mathrm{e}^{p\bar{V}_{T}^{t}}\llvert
\xi _{T}\rrvert ^{p} \bigr)\leq C \mathbb{E}^{\mathcal{F}_{t}}
\bigl(\mathrm{e}^{p\sup
_{s\in [  t,T ]  }\llvert  V_{s}\rrvert  } \llvert \eta\rrvert ^{p} \bigr).
\end{eqnarray*}
\upqed
\end{pf}

Let us now discuss the existence and uniqueness of a solution for the backward
stochastic equation of the form
%
\begin{equation}
\label{GBSDE} Y_{t}=\eta+\int_{t}^{T}
\Phi ( s,Y_{s},Z_{s} )\,\mathrm{d}Q_{s}-\int
_{t}%
^{T}Z_{s}\,
\mathrm{d}B_{s},\qquad \mbox{a.s., } \forall t\in [ 0,T ] . %
\end{equation}

We will need the following basic assumptions:

\begin{enumerate}[(A$_{5}^{\prime}$)]
\item[(A$_{3}^{\prime}$)] \textit{the process }$ \{
Q_{t}\dvtx t\geq0 \}  $\textit{ is a progressively measurable increasing
continuous stochastic process such that }$Q_{0}=0$, \textit{and }$ \{
\alpha_{t}\dvtx t\geq0 \}  $\textit{ is a real positive p.m.s.p. such that
}$\alpha\in [  0,1 ]  $\textit{ and }$\mathrm{d}t=\alpha_{t}\,\mathrm{d}Q_{t} $;

\item[(A$_{4}^{\prime}$)] \textit{the function }$\Phi
\dvtx \Omega\times[0,\infty)\times H\times L_{2} (  H_{1},H )
\rightarrow H$ \textit{is such that}
\[
\cases{\displaystyle \Phi ( \cdot,\cdot,y,z ) \mbox{ \textit{is p.m.s.p.,}}
\qquad \forall ( y,z ) \in H\times L_{2} ( H_{1},H ) \mbox{,}
\cr
\displaystyle\Phi ( \omega,t,\cdot,\cdot ) \mbox{ \textit{is continuous
function,}}\qquad\mathrm{d} \mathbb{P}\otimes \mathrm{d}t\mbox{\textit{-a.e.,}}%
}
\]
\textit{and }$\mathbb{P}$\textit{-a.s., }$\int_{0}^{T}\Phi_{\rho}^{\#} (
s )\,\mathrm{d}Q_{s}<\infty$, $\forall \rho\geq0$, \textit{where }$\Phi_{\rho
}^{\#} (  \omega,s )  :=\sup_{\llvert  u\rrvert
\leq\rho}\llvert  \Phi (  \omega,s,u,0 )  \rrvert  $;

\item[(A$_{5}^{\prime}$)] \textit{there exist a p.m.s.p.
}$\mu\dvtx \Omega\times[0,\infty)\rightarrow\mathbb{R}$\textit{ and a
function }$\ell\dvtx [0,\infty)\rightarrow{}[0,\infty)$\textit{ such that
}$\int_{0}^{T}\llvert \mu_{t}\rrvert \,\mathrm{d}Q_{t}+\int_{0}^{T}\ell^{2} (  t )\,\mathrm{d}t<\infty
$, $\mathbb{P}$\textit{-a.s. and, for all }$y,y^{\prime}\in H$, $z,z^{\prime
}\in L_{2} (  H_{1},H )  $,
\begin{eqnarray*}
\bigl\langle y^{\prime}-y,\Phi \bigl(t,y^{\prime},z \bigr)-\Phi(t,y,z)
\bigr\rangle &\leq&\mu_{t} \bigl\llvert y^{\prime}-y \bigr\rrvert
^{2},
\\
\bigl\llvert \Phi \bigl(t,y,z^{\prime} \bigr)-\Phi(t,y,z) \bigr\rrvert &
\leq& \ell ( t ) \alpha_{t} \bigl\llvert z^{\prime}-z \bigr\rrvert
.
\end{eqnarray*}
\end{enumerate}

Let $a>1$ and $V_{t}=\int_{0}^{t} (\mu_{s}+\frac{a}{2n_{p}%
} \ell^{2} (  s )  \alpha_{s} )\,\mathrm{d}Q_{s} $.

\begin{proposition}\label{prop 3 Appendix}
Let $p>1$ and $\eta\dvtx \Omega\rightarrow H$ be a random variable measurable with respect to $\sigma (   \{
\mathcal{F}_{t}\dvtx t\geq0 \}   )  $. Under the hypotheses
\emph{(A$_{3}^{\prime}$)--(A$_{5}^{\prime}$)}, if moreover,
%
\begin{equation}
\label{ineq 4 appendix} \mathbb{E} \bigl( \mathrm{e}^{pV_{T}}\llvert \eta\rrvert
^{p} \bigr) + \mathbb{E} \biggl(\int_{0}^{T}
\sup_{\llvert  y\rrvert  \leq\rho
} \bigl\llvert \mathrm{e}^{V_{t}}\Phi \bigl(
t,\mathrm{e}^{-V_{t}}y,0 \bigr) -\mu_{s}y \bigr\rrvert\,\mathrm{d}Q_{s} \biggr)^{p}<\infty,\qquad\forall\rho\geq0,
\end{equation}
there exists a unique pair $ (  Y_{t},Z_{t} )  _{t\geq0}%
\in\mathcal{S}^{0}\times\Lambda^{0}$ solution of the BSDE \emph{(\ref{GBSDE})} in the
sense that
\begin{enumerate}[(jj)] %
\item[(j)] $Y_{t}=\eta+\int_{t}^{T}\Phi ( s,Y_{s},Z_{s}
)\,\mathrm{d}Q_{s}-\int_{t}^{T}Z_{s}\,\mathrm{d}B_{s}%
$, a.s., $\forall t\in [ 0,T ] $,

\item[(jj)] $\mathbb{E}\|\mathrm{e}^{V}Y\|_{ [  0,T ]  }%
^{p}+\mathbb{E}  (\int_{0}^{T}\mathrm{e}^{2V_{s}}
\llvert Z_{s}\rrvert ^{2}\,\mathrm{d}s  )^{p/2}<\infty$.
\end{enumerate}
\end{proposition}

\setcounter{remark}{0}

\begin{remark}
If $ (  V_{t} )  _{t\geq0}$ is a deterministic process, then
assumption (\ref{ineq 4 appendix}) is equivalent to
\[
\mathbb{E} \bigl( \llvert \eta\rrvert ^{p} \bigr) +
\mathbb{E}%
\biggl(\int_{0}^{T}
\Phi_{\rho}^{\#}(s)\,\mathrm{d}Q_{s}
\biggr)^{p}< \infty.
\]
\end{remark}

\end{appendix}

\section*{Acknowledgements}
The authors would like to thank the referees for the attention in reading this paper and for their helpful suggestions and comments that
have led to the improvement of the paper.

The work was supported by grant POSDRU/89/1.5/S/49944 and respectively by IDEAS project, \textquotedblleft Deterministic and stochastic systems with
state constraints\textquotedblright, code 241/05.10.2011.


\printhistory

\end{document}